\documentclass[11pt, reqno]{amsart} 
\usepackage[utf8]{inputenc}
\usepackage[T1]{fontenc}

\usepackage{amssymb, amsmath, amsthm}
\usepackage[bookmarks, bookmarksdepth=2, colorlinks=true, linkcolor=blue, citecolor=blue, urlcolor=blue]{hyperref}

\usepackage[alphabetic,lite]{amsrefs}
\usepackage{verbatim}
\usepackage{amscd}   
\usepackage[all]{xy} 
\usepackage{youngtab} 
\usepackage{young} 
\usepackage{ytableau}
\usepackage{tikz}
\usepackage{ mathrsfs }
\usepackage{cases}
\usepackage{array}
\usepackage{cellspace}
\usepackage{calligra,mathrsfs}
\usepackage{bm}
\usepackage{graphicx}
\usepackage{rank-2-roots}
\usepackage{float}
\usepackage{enumitem}

\newcommand{\defi}[1]{{\bf\upshape\sffamily #1}}

\DeclareMathOperator{\ShHom}{\mathscr{H}\text{\kern -3pt {\calligra\large om}}\,}

\renewcommand{\a}{\alpha}
\renewcommand{\b}{\beta}
\newcommand{\bw}{\bigwedge}
\newcommand{\bB}{{\bf B}}

\newcommand{\bg}{{\bf g}}

\newcommand{\bN}{{\bf N}}

\newcommand{\bG}{{\bf G}}
\newcommand{\bh}{{\bf h}}

\def\kk{{\mathbf k}}
\renewcommand{\ll}{\lambda}

\newcommand{\oo}{\otimes}

\newcommand{\GL}{\operatorname{GL}}

\newcommand{\rk}{\operatorname{rank}}

\newcommand{\Sym}{\operatorname{Sym}}
\newcommand{\Tor}{\operatorname{Tor}}

\renewcommand{\det}{\operatorname{det}}

\renewcommand{\ker}{\operatorname{ker}}

\newcommand{\bb}[1]{\mathbb{#1}}

\renewcommand{\rm}[1]{\textrm{#1}}
\newcommand{\mc}[1]{\mathcal{#1}}
\newcommand{\mf}[1]{\mathfrak{#1}}
\newcommand{\ol}[1]{\overline{#1}}
\newcommand{\op}[1]{\operatorname{#1}}

\newcommand{\ul}[1]{\underline{#1}}

\def\PP{{\mathbf P}}
\def\lra{\longrightarrow}

\newtheorem{theorem}{Theorem}[section]
\newtheorem*{theorem*}{Theorem}
\newtheorem*{problem*}{Problem}
\newtheorem{lemma}[theorem]{Lemma}

\newtheorem{proposition}[theorem]{Proposition}
\newtheorem{corollary}[theorem]{Corollary}
\newtheorem*{corollary*}{Corollary}

\newtheorem*{main-thm*}{Main Theorem}
\newtheorem*{linear-resolutions*}{Theorem on Linear Resolutions}
\newtheorem*{regularity-powers*}{Theorem on Regularity}
\newtheorem*{injectivity-Ext*}{Theorem on Injectivity of Maps of Ext Modules}
\newtheorem*{Kodaira*}{Kodaira Vanishing for Determinantal Thickenings}

\theoremstyle{definition}

\newtheorem*{definition*}{Definition}
\newtheorem{example}[theorem]{Example}

\newtheorem{problem}[theorem]{Problem}

\theoremstyle{remark}
\newtheorem{remark}[theorem]{Remark}
\newtheorem*{remark*}{Remark}

\numberwithin{equation}{section}

\begin{document}

\title[Cohomology on the incidence correspondence]{Cohomology characters on the incidence correspondence}

\author{Annet Kyomuhangi}
\address{Department of Mathematics, Busitema University, P.O. Box 236, Tororo}
\email{annet.kyomuhangi@gmail.com}

\author{Emanuela Marangone}
\address{Dipartimento di Matematica, Università degli Studi di Genova, Via Dodecaneso, 35, 16146 Genova, Italy \newline
\indent CIMAT - Centro de Investigación en Matemáticas, Valenciana, 36023 Guanajuato, Mexico}
\email{emanuela.marangone@cimat.mx}

\author{Claudiu Raicu}
\address{Department of Mathematics, University of Notre Dame, 255 Hurley, Notre Dame, IN 46556\newline
\indent Institute of Mathematics ``Simion Stoilow'' of the Romanian Academy}
\email{craicu@nd.edu}

\author{Ethan Reed}
\address{Academy of Mathematics and Systems Science, Chinese Academy of Sciences, No. 55 Zhongguancun
East Road, Beijing, 100190, China.}
\email{ethan.reed@amss.ac.cn}

\subjclass[2020]{Primary 14M15, 14C20, 20G05, 20G15, 05E05, 13A35}

\date{\today}

\keywords{Character formulas, cohomology of line bundles, incidence correspondence, Green--Han--Monsky representation ring, Verlinde algebras}

\begin{abstract} 
We investigate the cohomology of line bundles on the incidence correspondence, the partial flag variety parametrizing pairs consisting of a point in projective space and a hyperplane containing it. In characteristic zero, this cohomology is governed by the Borel--Weil--Bott theorem. In characteristic $p>0$, however, it becomes considerably subtler, and admits an equivalent reformulation in terms of cohomology tables for divided powers of the cotangent bundle on projective space. Our approach to the problem involves passing to infinitesimal thickenings of the incidence correspondence inside the ambient product of projective spaces. This leads to recursive formulas for the cohomology, generalizing earlier work of Donkin, of Liu, and of Gao--Raicu. We obtain generating functions for cohomology characters, expressed using truncated Schur polynomials and symmetric polynomials encoding the higher structure constants of the Verlinde algebras of $\op{SU}(2)$ at levels $(p-2)$ and $(2p-2)$. Along the way, we exploit two important connections with multiplication in the graded Green--Han--Monsky representation ring: one relates this ring to cohomology, and another connects to the Verlinde algebras through the work of Coulembier--Etingof--Ostrik.
\end{abstract}

\maketitle

\section{Introduction}\label{sec:intro}

This paper is motivated by two fundamental cohomological questions that are settled in characteristic zero by the Borel--Weil--Bott theorem, but remain much subtler in positive characteristic:

\begin{problem}\label{pb:coh-flag}
    Describe the cohomology groups of line bundles on flag varieties.
\end{problem}

\begin{problem}\label{pb:poly-coh-tables}
    Given a polynomial functor $\mathcal{P}$, describe the cohomology table of $\mathcal{P}(\Omega)$, where $\Omega$ denotes the cotangent bundle on projective space.
\end{problem}

\noindent Problem~\ref{pb:coh-flag} is so fundamental that any brief attempt at motivation would do it little justice; instead, we refer the reader to the survey \cite{andersen-survey}, which contains many pointers to important work across a vast literature. Problem~\ref{pb:poly-coh-tables}, together with its connections to flag varieties and modular representation theory, is explored in depth in \cites{rai-vdb1,rai-vdb2}. Even in characteristic zero, when $\mathcal{P}=\mathbb{S}_{\lambda}$ is a Schur functor, the solution to Problem~\ref{pb:poly-coh-tables} already had an important application to Boij--S\"oderberg theory via the equivariant construction of supernatural vector bundles on projective space \cite{ESW}*{Theorem~5.6}, \cite{ES}*{Theorem~6.2}. When $\mathcal{P}=\Sym^d$ is a symmetric power functor, Problem~\ref{pb:poly-coh-tables} is also connected to vanishing theorems for Koszul modules, which are a crucial ingredient in establishing Green's conjecture for general curves in sufficiently positive characteristic \cite{AFPRW}*{Theorem~1.3}, \cite{rai-vdb1}*{Theorem~7.1}.

The goal of this paper is to solve Problems~\ref{pb:coh-flag} and~\ref{pb:poly-coh-tables} in a special case, in all positive characteristics: for the \defi{incidence correspondence}, the partial flag variety parametrizing pairs consisting of a point in projective space and a hyperplane containing it, and for the \defi{divided power functor} $\mc{P}=D^d$, defined as the subspace of $d$-fold tensors invariant under the permutation action of the symmetric group $\mathfrak{S}_d$,
\[D^dV = \left(V^{\oo d}\right)^{\mf{S}_d}.\] 
This substantially extends \cite{gao-raicu}, which only provides a characterization of the vanishing and nonvanishing behavior of cohomology. To our knowledge, this gives the first non-trivial infinite family of flag varieties of Picard rank~$2$ where Problem~\ref{pb:coh-flag} is resolved in every characteristic. As for Problem~\ref{pb:poly-coh-tables}, the only other family of functors for which it is resolved consists of exterior powers $\bw^d$, for which the cohomology is the same in every characteristic. Even the particular setting we consider already reveals the richness of these problems, leading to unexpected connections with questions of independent interest, such as the determination of the splitting type of vector bundles of principal parts on the projective line, the study of multiplication in the graded Green--Han--Monsky representation ring and the Verlinde algebras for $\op{SU}(2)$, and the Weak Lefschetz Property for monomial complete intersections. The results established here admit an algorithmic implementation, which we carry out in the computer algebra system Macaulay2 \cite{KMRR-M2}. They also lead to a characterization of the Weak Lefschetz Property for monomial complete intersections, an application we pursue in \cite{KMRR-WLP}.

\smallskip

\noindent {\bf The main objects.} We let $V=\kk^n$ where $\kk$ is an algebraically closed field of characteristic $p>0$, $n\geq 2$, let $S=\Sym(V) \simeq \kk[x_1,\cdots,x_n]$ be the symmetric algebra of $V$, and write $\PP=\bb{P}V=\op{Proj}(S)$ for the projective space parametrizing $1$-dimensional quotients of $V$. It carries the tautological exact sequence
\begin{equation}\label{eq:ses-on-PV}
0 \lra \mc{R} \lra V \oo \mc{O}_{\PP} \lra \mc{O}_{\PP}(1) \lra 0,
\end{equation}
where $\mc{R}$ denotes the tautological rank $(n-1)$ subbundle of $V \oo \mc{O}_{\PP}$. After twisting by $\mc{O}_{\PP}(-1)$, we obtain the Euler sequence, and in particular we identify the cotangent bundle on $\PP$ as $\Omega=\mc{R}(-1)$. The special case of Problem~\ref{pb:poly-coh-tables} considered in this paper is the determination of the sheaf cohomology groups of all twists
\[ D^d\mc{R} \oo \mc{O}_{\PP}(e) = D^d\Omega \oo \mc{O}_{\PP}(d+e)\quad\text{ for }d\geq 0,\ e\in\bb{Z},\]
referred to as the \defi{cohomology table} of $D^d\Omega$. Consider next the \defi{incidence correspondence}
\[ X = \{(p,H) : p\in H\} \subset \PP \times \PP^{\vee},\]
which is the hypersurface cut out by the bilinear form
\begin{equation}\label{eq:def-omega}
\omega = x_1y_1+\cdots+x_ny_n.
\end{equation}
The line bundles on $X$ are obtained by restriction from $\PP \times \PP^{\vee}$, and we write
\[ \mc{O}_X(a,b) = \mc{O}_{\PP \times \PP^{\vee}}(a,b)_{|_X}\quad\text{ for }(a,b)\in\bb{Z}^2.\]
The special case of Problem~\ref{pb:coh-flag} considered in this paper is the determination of the sheaf cohomology groups of all such line bundles $\mc{O}_X(a,b)$. The connection with the preceding problem is provided by the isomorphisms
\[H^i\left(\PP, D^d\Omega\oo \mc{O}_{\PP}(d+e)\right) = H^i\left(\PP, D^d\mc{R}\oo \mc{O}_{\PP}(e)\right) = H^{i+n-2}\left(X,\mc{O}_X(e+1,-d-n+1)\right) \oo \bw^n V^{\vee}.\]
While the right-hand side does not account for every line bundle on $X$, the elementary considerations recalled in Section~\ref{sec:prelim} show that it captures them up to natural symmetries. This forms the starting point of \cite{gao-raicu}, where a complete characterization of the vanishing and nonvanishing of the cohomology is obtained.

Our goal is to go beyond vanishing and give an exact description of the cohomology. To this end, we exploit its structure as a representation of $\GL(V)\simeq\GL_n$, and in particular of the maximal torus $(\kk^{\times})^n\subset \GL_n$. Recall that every finite dimensional representation $W$ of the algebraic torus $(\kk^{\times})^n$ has an eigenspace decomposition (or equivalently a $\bb{Z}^n$-grading), and we define the \defi{character} of $W$ to be the Laurent polynomial
\[[W] := \sum_{(i_1,\cdots,i_n)\in\bb{Z}^n} \dim\left(W_{(i_1,\cdots,i_n)}\right)\cdot z_1^{i_1}\cdots z_n^{i_n} \in \bb{Z}[z_1^{\pm 1},\cdots,z_n^{\pm 1}].\]
If $W$ is a $\GL_n$-representation, then $[W]$ is invariant under the action of $\mf{S}_n$ by coordinate permutations, that is, it belongs to the ring of symmetric Laurent polynomials (the \defi{character ring})
\begin{equation}\label{eq:def-Lambda} 
\Lambda = \bb{Z}[z_1^{\pm 1},\cdots,z_n^{\pm 1}]^{\mf{S}_n}.
\end{equation}
The main goal of our paper is then to understand the characters
\begin{equation}\label{eq:def-coh-chars} h^i(D^d\mc{R}(e)) = h^i(D^d\Omega(d+e)) := \left[H^i(\PP,D^d\Omega\oo \mc{O}_{\PP}(d+e))\right]\text{ and } h^i(\mc{O}_X(a,b)) := \left[H^i(X,\mc{O}_X(a,b))\right].
\end{equation}

\smallskip

\noindent {\bf The main ideas.}
A central insight of our approach is that the recursive character formulas arise naturally from the study of line bundle cohomology on the infinitesimal thickenings
$X^{(r)}\subset \PP\times\PP^{\vee}$. For suitable parameters $a,b$, one can show the existence of a torus-equivariant splitting
\begin{equation}\label{eq:splitting-HiOXab}
H^j\left(X^{(p)},\mc{O}_{X^{(p)}}(a,b)\right)
\simeq
\bigoplus_{i=0}^{p-1}
H^j\left(X,\mc{O}_X(a-i,b-i)\right).
\end{equation}
Since $\mc{O}_{X^{(p)}}$ is the pullback of $\mc{O}_X$ under the Frobenius morphism of $\PP\times\PP^{\vee}$, the cohomology of $\mc{O}_{X^{(p)}}(a,b)$ is related to Frobenius twists of the cohomology of line bundles $\mc{O}_X(a',b')$, where $a'$ and $b'$ are smaller than $a$ and $b$ by a factor of $p$; this is the geometric source of the recursion.

For arbitrary $a,b$, however, the splitting \eqref{eq:splitting-HiOXab} must be corrected to account for the possible non-vanishing of the connecting homomorphisms in cohomology. Analyzing these maps one weight at a time leads to a purely algebraic formulation. Thus, beyond the geometric intuition discussed in Section~\ref{sec:prelim}, the formal proof no longer requires infinitesimal thickenings: it reduces to studying the Artinian algebras
\begin{equation}\label{eq:defA}
A=\kk[T_1,\ldots,T_n]/
\langle T_1^{a_1},\ldots,T_n^{a_n}\rangle
\end{equation}
as $\kk[T]$-modules, where $T=T_1+\cdots+T_n$. In this formulation, the failure of the naive splitting is measured entirely by the structure of these modules, rather than by any additional geometric input. The resulting module-theoretic problem is encoded by $n$-fold products in the Green--Han--Monsky (GHM) representation ring (see below for a definition). The decisive point is that the ring structure reduces the analysis for arbitrary $n$ to the case $n=2$, while all the relevant information is already contained in a suitable quotient (the reduced GHM ring). This quotient admits a realization as a tensor product of Verlinde algebras for $\op{SU}(2)$ at levels $(p-2)$ and $(2p-2)$. Consequently, the entire correction to \eqref{eq:splitting-HiOXab} for general parameters $a,b$ is governed by higher structure constants in the reduced GHM ring, or equivalently in the corresponding Verlinde algebras.

\medskip

\noindent{\bf Generating functions for cohomology.} To answer Problem~\ref{pb:poly-coh-tables} for $\mc{P}=D^d$, we focus on the first cohomology group, and consider the generating function $\bG(u,v)\in\Lambda[[u,v]]$, defined by
\begin{equation}\label{eq:def-Guv}
    \bG(u,v) = \sum_{d\geq 0,\ e\geq -1} h^1(D^d\Omega(d+e))\cdot u^d\cdot v^{d+e}.
\end{equation}
It can be shown that $h^0(D^d\Omega(d+e))=h^1(D^{e+1}\Omega(d+e))$, that $h^i(D^d\Omega(d+e))=0$ for $i\neq 0,1,n-1$, and that $h^{n-1}(D^d\Omega(d+e))$ does not depend on the characteristic of the underlying field. Accordingly, a complete answer to Problem~\ref{pb:poly-coh-tables} reduces to determining $\bG(u,v)$. To this end, we introduce some notation. We define
\[ E(\ul{z}) = \prod_{i=1}^n(1+z_i)=\sum_{i=0}^n e_i(\ul{z}),\]
where $e_i$ denotes the $i$-th \defi{elementary symmetric polynomial}. We set
\begin{equation}\label{eq:def-thetar-Psir}
\theta_r = \frac{r\pi}{p}\quad\text{and}\quad\Psi_r(\ul{z}) = \prod_{i=1}^n\frac{1-(-1)^rz_i^p}{1-2\cos(\theta_r)z_i+z_i^2}\quad\text{ for }r=1,\cdots,p.
\end{equation}
We write $\delta_{r,p}$ for the \defi{Kronecker delta}, and let
\begin{equation}\label{eq:def-Thetaz-Theta'z}
\Theta(\ul{z}) = E(\ul{z})\cdot\sum_{r=1}^p \frac{2-\delta_{r,p}}{p}\sin^2\left(\frac{\theta_r}{2}\right)\Psi_r(\ul{z}),\quad \Theta'(\ul{z}) = E(\ul{z})\cdot\sum_{r=1}^p (-1)^{r+1}\frac{2-\delta_{r,p}}{p}\sin^2\left(\frac{\theta_r}{2}\right)\Psi_r(\ul{z}).
\end{equation}
Since $\Theta(\ul{z})$ and $\Theta'(\ul{z})$ are nonhomogeneous symmetric polynomials, we can decompose them into their even- and odd-degree parts as $\Theta(\ul{z}) = \Theta_{even}(\ul{z})+\Theta_{odd}(\ul{z})$, $\Theta'(\ul{z}) = \Theta'_{even}(\ul{z})+\Theta'_{odd}(\ul{z})$. For each $q=p^s$, $s\geq 0$, we consider the endomorphism 
$F^q:\Lambda\lra\Lambda$ defined by $F^q(z_i)=z_i^q$, and extend it to $\Lambda[[t]]$, $\Lambda[[u,v]]$ by setting $F^q(t)=t^q$, $F^q(u)=u^q$, $F^q(v) = v^q$. We then define
\begin{equation}\label{eq:def-Mz-Mt} 
B(\ul{z}) = \begin{bmatrix}
    \Theta_{even}(\ul{z}) & \Theta'_{odd}(\ul{z}) \\
    \Theta_{odd}(\ul{z}) & \Theta'_{even}(\ul{z}) \\
\end{bmatrix},\text{ and let }\bB(t) = F^{p}(B(t\ul{z}))\cdot F^{p^2}(B(t\ul{z}))\cdots = \prod_{s\geq 1}F^{p^s}(B(t\ul{z})),
\end{equation}
where the infinite product converges to a $2\times 2$ matrix with entries in $\Lambda[[t]]$. We also define
\begin{equation}\label{eq:def-Acz} A_c(\ul{z}) = \frac{2}{p}\sum_{r=1}^{p-1}\sin(\theta_r)\sin(c\theta_r)\Psi_r(\ul{z})\quad\text{ for }c=1,\cdots,p-1.
\end{equation}
While it is not immediately apparent from their definition, the polynomials $A_c(\ul{z})$ and $\Theta(\ul{z})$, $\Theta'(\ul{z})$ have non-negative integer coefficients. These coefficients are described in Section~\ref{subsec:Ver-structure-constants} as higher structure constants for Verlinde algebras. Remarkably, we show in \eqref{eq:Ac-from-lmu} that each homogeneous component of $A_c(\ul{z})$ encodes a simple modular character of $\GL_n$, whose highest weight has the form $\ll=(p-2,p-2,\cdots,p-2,u,v)$, and in particular it is \defi{$(p-2)$-special} in the sense of \cite{mat-pap}. Therefore each $A_c(\ul{z})$ admits an alternative description involving tableau combinatorics (see Section~\ref{subsec:Verlinde-from-tableaux}). Although the homogeneous components of $\Theta(\ul{z})$ and $\Theta'(\ul{z})$ are no longer simple characters, they can be computed in terms of $A_c(\ul{z})$ and elementary symmetric polynomials.

We next consider the power series $\bN_c(t)\in\Lambda[[t]]$ defined by
\begin{equation}\label{eq:Nct=AM10product}
\bN_c(t) = \begin{bmatrix}
    A_c(t\ul{z}) & A_{p-c}(t\ul{z})
\end{bmatrix}\cdot
\bB(t)\cdot
\begin{bmatrix}
    1 \\ 0
\end{bmatrix}
\quad\text{ for }c=1,\cdots,p-1,
\end{equation}
and assemble them into a bivariate series $\bN(u,v)\in\Lambda[[u,v]]$ defined by
\begin{equation}\label{eq:def-Nuv}
    \bN(u,v) = \sum_{c=1}^{p-1}\bN_c\left(u^{1/2}v\right)\cdot u^{1/2}\cdot\frac{u^{c/2}-u^{p-c/2}}{1-u^p}.
\end{equation}
Finally, we let
\begin{equation}\label{eq:def-bhqt}
\bh^{(q)}(t) = \prod_{i=1}^n\frac{1-F^q(tz_i)}{1-tz_i} = \prod_{i=1}^n(1+tz_i+t^2z_i^2+\cdots+t^{q-1}z_i^{q-1}),
\end{equation}
noting that $\bh^{(q)}(t) = \sum_{d\geq 0}h^{(q)}_d(\ul{z})\cdot t^d$, with coefficients the \defi{$q$-truncated complete symmetric polynomials}
\begin{equation}\label{eq:def-hqd}
h^{(q)}_d(\ul{z}) = \sum_{\substack{i_1+\cdots+i_n = d \\ 0\leq i_j<q}}z_1^{i_1}\cdots z_n^{i_n}.
\end{equation}

\begin{theorem}\label{thm:main-nonrec-coh} If we let $\bG(u,v)$, $\bN(u,v)$ as in \eqref{eq:def-Guv}, \eqref{eq:def-Nuv} then $\bG$ satisfies the functional equation
    \begin{equation}\label{eq:func-eqn-Guv}
    \bG(u,v) = \frac{\bh^{(p)}(uv)\cdot \bh^{(p)}(v)}{1+u+\cdots+u^{p-1}}\cdot F^p(\bG(u,v)) + \bN(u,v).
    \end{equation}
    If we write $q=p^s$ for $s\geq 0$, then
    \[\bG(u,v) = \sum_{q\geq 1} \frac{\bh^{(q)}(uv)\cdot \bh^{(q)}(v)}{1+u+\cdots+u^{q-1}}\cdot F^q\left(\bN(u,v)\right). \]
\end{theorem}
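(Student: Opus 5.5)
The second formula follows from the functional equation \eqref{eq:func-eqn-Guv} by iteration, so the plan is to prove \eqref{eq:func-eqn-Guv} first. Write $\Phi_q(u,v)=\bh^{(q)}(uv)\bh^{(q)}(v)/(1+u+\cdots+u^{q-1})$. Since $F^p$ is a ring endomorphism with $F^p(\bh^{(q)}(t))=\bh^{(q)}(t^p)$ evaluated at $z_i^p$, the identity $\bh^{(pq)}(t)=\bh^{(p)}(t)\cdot F^p(\bh^{(q)}(t))$ gives $\Phi_{pq}=\Phi_p\cdot F^p(\Phi_q)$. Here we also use $1+u+\cdots+u^{pq-1}=(1+\cdots+u^{p-1})(1+u^p+\cdots+u^{p(q-1)})$. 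Substituting $\bG$ into \eqref{eq:func-eqn-Guv} $s$ times then yields $\bG=\sum_{j<s}\Phi_{p^j}F^{p^j}(\bN)+\Phi_{p^s}F^{p^s}(\bG)$. The remainder has $(u,v)$-adic order at least $p^s$, because $\bG$ has no constant term in $v$ when $e\ge0$, and the terms with $e=-1$ involve $v^{d-1}$ with $d\ge1$. The series therefore converges, which gives the closed form.

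For \eqref{eq:func-eqn-Guv}, I would translate $h^1(D^d\Omega(d+e))$ into $h^{n-1}(\mc{O}_X(e+1,-d-n+1))$ using the isomorphism displayed in the introduction. I would then work with the recursion coming from the Frobenius thickening $X^{(p)}$. The exact sequences $0\to\mc{O}_X(a-i,b-i)\cdot\omega^i\to\mc{O}_{X^{(i+1)}}\to\mc{O}_{X^{(i)}}\to0$ for $i<p$ relate $H^\bullet(X^{(p)},\mc{O}(a,b))$ to the graded pieces. The cohomology of $\mc{O}_{X^{(p)}}(a,b)$ is computed by pushing forward along Frobenius. Writing $a=pa'+\alpha$ and $b=pb'+\beta$, it equals $\bigoplus$ over $0\le\alpha',\beta'<p$ of $F^p$-twists of $H^\bullet(X,\mc{O}_X(a',b'))$ tensored with the characters of the graded pieces of $F_*\mc{O}$. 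This produces the factor $\bh^{(p)}(uv)\bh^{(p)}(v)$: the truncated symmetric polynomials are the characters of $\Sym(V)/\langle x_i^p\rangle$ in the two sets of variables. If the filtration split as in \eqref{eq:splitting-HiOXab}, the $p$ consecutive graded pieces would sum along the diagonal $(a-i,b-i)$, and in generating-function language this corresponds to multiplication by $1+u+\cdots+u^{p-1}$. Dividing by it therefore recovers $\bG$ from the thickening data, and this gives the first term.

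The correction term $\bN$ comes from the connecting homomorphisms. Weight by weight, these reduce to the $\kk[T]$-module structure of the Artinian algebras $A$ in \eqref{eq:defA}, whose exponents $a_i\le p$ are determined by the $p$-adic digits of the weight. The failure of the splitting in a fixed weight is measured by the number of Jordan blocks of multiplication by $T$ of each length. These counts are products in the (graded) GHM ring, and they reduce to $n=2$ computations. After passing to the reduced GHM ring, which is the tensor product of Verlinde algebras at levels $p-2$ and $2p-2$, the counts are given by higher structure constants. These are diagonalized by the $S$-matrix with eigenvalues $2\cos\theta_r$. Summing over weights gives $\Psi_r$, and from there the series $A_c$, $\Theta$ and $\Theta'$. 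The matrix $B$ records how the parity of $r$ and the even/odd degree part propagate from one $p$-adic digit to the next, so the $p$-adic expansion of $(a,b)$ assembles into the infinite product $\bB(t)$. The residue $c$ of the relevant parameter mod $p$ selects $[A_c\ A_{p-c}]$. The factor $u^{1/2}(u^{c/2}-u^{p-c/2})/(1-u^p)$ is the generating function of the correction along the diagonal for residue $c$.

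The main obstacle is this last step: computing, for every weight, the Jordan type of $T$ on $A$, and matching the resulting sum exactly with $\bN$. In particular one has to show that the boundary terms (where $a_i=p$, and the top digit) are precisely accounted for by the factor $F^p(B)$ and the $e\ge -1$ range. I would first verify everything for $n=2$, where $A$ has explicit Jordan type; this is essentially the principal-parts splitting on $\PP^1$. I would then use multiplicativity in the GHM ring to pass to general $n$, checking the formula against small $p$ via the Macaulay2 implementation.
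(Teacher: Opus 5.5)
Your derivation of the closed form from \eqref{eq:func-eqn-Guv} by iteration is correct and matches the paper. The gap is in \eqref{eq:func-eqn-Guv} itself. You reproduce the architecture: the filtration of $\mc{O}_{X^{(p)}}$ plus Frobenius gives the first term, and the connecting maps give the correction. But identifying that correction with $\bN(u,v)$ is the whole content of the proof, and you leave it as ``the main obstacle'' to be checked for $n=2$ and in Macaulay2. Three concrete steps are missing.

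(1) In weight $\ul{a}$ and degree $t$, the rank of $\partial^r$ equals the number of summands $\delta_c(-(t-r+1))$ with $c\le r-1$ of $M_{\ul a}=\delta_{1+a_1}\cdots\delta_{1+a_n}$ (Lemma~\ref{lem:rank-partialM}). Summing over $r\le p$ gives $\sum_{c=1}^{p-1}f^{\ul a}_c(t)(t^c+\cdots+t^{p-1})$. After setting $t=u^{-1}$ and dividing by $1+u+\cdots+u^{p-1}$, this is the source of the factor $u^{1/2}(u^{c/2}-u^{p-c/2})/(1-u^p)$.

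(2) This is the idea your sketch lacks. The ungraded reduced ring $\ol{\Delta}^u$ only records $f^{\ul a}_c(1)$. To recover the $u$-grading you need that a summand $\delta_c(-j)$ with $p\nmid c$ has a forced shift, $c+2j=a_1+\cdots+a_n+1$ (Proposition~\ref{prop:delcj-in-product}). It is proved from the two-factor case by induction, using that blocks of length divisible by $p$ span an ideal. This makes $f^{\ul a}_c(t)$ a single monomial, and it is why $\bN_c$ is evaluated at $u^{1/2}v$.

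(3) The formula \eqref{eq:Nct=AM10product} needs an actual computation through $\gamma_q$. Write $a_i=qm_i+b_i$, so that $1+a_i=\gamma_q(m_i,\iota^{m_i}(1+b_i))$, and use $\delta_{q-1}\delta_c=\delta_{q-c}$ in $\ol{\Delta}^u_q$. Then read off the coefficients of $\phi'_0\oo\delta_c$ and $\phi'_{p-1}\oo\delta_{q-c}$, which are governed by $\Theta=w^{p-1,0}$ and $\Theta'=w^{p-1,p-1}$. Saying that $B$ ``records how parity propagates'' does not replace this derivation.

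There is also a concrete misconception. The algebra in weight $\ul a$ is $\kk[T_1,\dots,T_n]/(T_1^{1+a_1},\dots,T_n^{1+a_n})$ with the full weight, not one with exponents $\le p$ read off from digits. With exponents bounded by $p$ you would only ever see $\ol{\Delta}^u_p\cong\mc{V}_{p-2}$ and would obtain only $A_c=\bN^p_c$. The factors $F^{p^s}(B)$ in $\bB(t)$ appear precisely because the short blocks ($c<p$) of the full product depend on every $p$-adic digit of $\ul a$. Likewise, $c$ is the length of a Jordan block, not a residue of some parameter mod $p$, and several values of $c$ can occur in a single weight.

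Finally, your geometric computation of the first term is imprecise. The Frobenius pushforward is a sum over $\ul i,\ul j\in[0,p-1]^n$ with twists depending on $|\ul i|$ and $|\ul j|$, some of them negative, not a sum over two residues. Moreover, \eqref{eq:les-coh-thick} requires $m_1\ge r-n+1$, which fails for small $m_1$ once $r>n-1$. The paper avoids both problems by working with $M_{\ul a}$ directly. There \eqref{eq:les-Tor01-M/TrM} is always exact, and Frobenius enters only through the $\kk[T^p]$-decomposition $M_{\ul a}=\bigoplus_{\ul i}M^{(p)}_{\ul a'}T_1^{i_1}\cdots T_n^{i_n}$.
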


To elucidate the notation, we consider the special case $p=2$, where $\theta_1=\pi/2$, $\theta_2=\pi$, and
\[\Psi_1(\ul{z})=1,\quad \Psi_2(\ul{z}) = \prod_{i=1}^n\frac{1-z_i}{1+z_i}.\]
It follows that
\begin{equation}\label{eq:Thetaz-p=2}
\Theta(\ul{z}) = E(\ul{z})\cdot\frac{\Psi_1(\ul{z})+\Psi_2(\ul{z})}{2}=\frac{\prod_{i=1}^n(1+z_i) + \prod_{i=1}^n(1-z_i)}{2} = \sum_{i\geq 0}e_{2i}(\ul{z}) = \Theta_{even}(\ul{z}).
\end{equation}
Similarly, we get $\Theta'(\ul{z}) = \sum_{i\geq 0}e_{2i+1}(\ul{z}) = \Theta'_{odd}(\ul{z})$ and therefore
\begin{equation}\label{eq:Mz-charp=2}
B(\ul{z}) = \begin{bmatrix}
    \Theta(\ul{z}) & \Theta'(\ul{z}) \\
    0 & 0 \\
\end{bmatrix}.
\end{equation}
Since $A_1(\ul{z})=\Psi_1(\ul{z})=1$, it follows that
\begin{equation}\label{eq:N1t-char2}
\bN_1(t) = \prod_{s\geq 1}F^{2^s}(\Theta(t\ul{z})) \quad\text{and}\quad \bN(u,v) = \bN_1(u^{1/2}v)\cdot\frac{u}{1+u}. \end{equation}
We can now reinterpret Theorem~\ref{thm:main-nonrec-coh} in terms of Nim symmetric polynomials, as follows (see also \cite{gao-raicu}*{Theorem~1.9}, \cite{GRV}*{Section~5.3}). For $d\geq 0$, we consider its $2$-adic expansion $d=(d_k\cdots d_0)_2$, where
\[ d= \sum_{i=0}^k d_i\cdot 2^i,\text{ with }d_i\in\{0,1\}\text{ for all }i.\]
We define the \defi{Nim-sum} $a\oplus b$ by adding the digits in their $2$-adic expansions modulo $2$:
\begin{equation}\label{eq:def-nim-sum} a\oplus b = c\text{ if and only if }a_i + b_i \equiv c_i \text{ mod }2\text{ for all }i.
\end{equation}
We then define the \defi{Nim symmetric polynomials} via
\begin{equation}\label{eq:def-Nim-pol}
 \mc{N}_m(\ul{z}) = \sum_{\substack{i_1+\cdots+i_n=2m \\ i_1\oplus i_2\oplus\cdots\oplus i_n = 0}} z_1^{i_1} z_2^{i_2}\cdots z_n^{i_n}.
\end{equation}
If we define the \defi{$q$-truncated Schur polynomials $s^{(q)}_{(a,b)}$} by
\begin{equation}\label{eq:def-sqab}
s^{(q)}_{(a,b)} = h^{(q)}_a\cdot h^{(q)}_b-h^{(q)}_{a+1}\cdot h^{(q)}_{b-1}
\end{equation}
then Theorem~\ref{thm:main-nonrec-coh} can be shown to yield the following consequence in characteristic $p=2$.

\begin{theorem}\label{thm:h1coh-char2-nonrecursive}
    If $\op{char}(\kk)=2$ and $e\geq d-1$ then
 \[ h^1(D^d\mc{R}(e)) = \sum_{(q,m,j)\in\Lambda_d} F^{2q}(\mc{N}_m)\cdot s^{(q)}_{(e-(2m-2j-1)q,d-(2m+2j+1)q)},\]
 where $\Lambda_d = \left\{(q,m,j) | q=2^s\text{ for some }s\geq 1,\ m,j\geq 0,\text{ and }(2m+2j+1)q\leq d\right\}$.
\end{theorem}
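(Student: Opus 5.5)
We specialize the closed formula of Theorem~\ref{thm:main-nonrec-coh} to $p=2$ and extract the coefficient of $u^dv^{d+e}$. By \eqref{eq:N1t-char2} we have $\bN(u,v)=\bN_1(u^{1/2}v)\cdot\frac{u}{1+u}$, with $\bN_1(t)=\prod_{s\geq1}F^{2^s}(\Theta(t\ul{z}))$.

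\emph{Step 1: identify $\bN_1$ with the Nim polynomials.} Since $\Theta(t\ul{z})=\sum_i e_{2i}(\ul{z})t^{2i}$, expanding the product over $s\geq 1$ gives a sum of monomials $z_1^{i_1}\cdots z_n^{i_n}$. Each $i_j$ is a sum of distinct powers $2^s$ with $s\geq1$, and for each $s$ an even number of the $i_j$ use the digit $2^s$. So the exponents are even, with Nim-sum zero, and each such monomial arises exactly once. Writing the total degree as $2\cdot 2m'$, this gives
\[\bN_1(t)=\sum_{m\geq0}F^2(\mc{N}_m)\,t^{4m},\qquad\text{equivalently}\qquad \bN_1(t)=F^2\Big(\sum_m\mc{N}_m(\ul{z})t^{2m}\Big).\]
Consequently
\[F^q(\bN(u,v))=\sum_m F^{2q}(\mc{N}_m)\,u^{2mq}v^{4mq}\cdot\frac{u^q}{1+u^q}.\]

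\emph{Step 2: combine with the truncated factors.} For $p=2$ the theorem reads
\[\bG=\sum_q \frac{\bh^{(q)}(uv)\,\bh^{(q)}(v)}{1+u+\cdots+u^{q-1}}\cdot F^q(\bN).\]
We simplify
\[\frac{u^q}{(1+u+\cdots+u^{q-1})(1+u^q)}=\frac{u^q(1-u)}{1-u^{2q}}=\sum_{j\geq0}\big(u^{(2j+1)q}-u^{(2j+1)q+1}\big).\]
The coefficient of $u^d v^{d+e}$ in $u^{N}\bh^{(q)}(uv)\bh^{(q)}(v)$ is $h^{(q)}_{a}h^{(q)}_{b}$ with $a=d-N$ and $b=e+N$. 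The $-u^{N+1}$ term contributes $-h^{(q)}_{a-1}h^{(q)}_{b+1}$. Here $N=(2j+1)q+2mq$, and the factor $v^{4mq}$ shifts the $v$-exponent, which must be tracked carefully. This gives $h^{(q)}_a h^{(q)}_b-h^{(q)}_{a-1}h^{(q)}_{b+1}$, which is $s^{(q)}_{(b,a)}$ in the convention \eqref{eq:def-sqab} with first index $b$. Matching indices should give $a=d-(2m+2j+1)q$ and $b=e-(2m-2j-1)q$. For example, the $u$-exponent is $(2j+1)q+2mq+k=d$ and the $v$-exponent is $4mq+k+l=d+e$, so $l=e+(2j+1)q-2mq$. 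This matches $e-(2m-2j-1)q$.

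\emph{Step 3: the index range.} The term $q=1$ must be dropped. Since $F^1(\bN)=\bN$ involves $\frac{u}{1+u}$ with $q=1$, we check that its contribution matches the boundary conventions, or that the stated range $s\geq1$ arises from the hypothesis $e\geq d-1$. Nonnegativity $a\geq0$ gives the constraint $(2m+2j+1)q\leq d$.

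The main obstacle is exactly this bookkeeping. We must reconcile the $q=1$ summand and negative-index conventions for $s^{(q)}$ with the hypothesis $e\geq d-1$. We first try to show the $q=1$ terms vanish or are absorbed because $h^{(1)}_a=\delta_{a,0}$ and $b\geq a$. If that fails, we reindex $q\mapsto 2q$ using the fact that $\bN_1$ is itself an $F^2$-image.
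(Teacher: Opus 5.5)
Your Steps 1--2 follow the paper's computation. You specialize the second formula of Theorem~\ref{thm:main-nonrec-coh} and rewrite $u^q/\bigl((1+u+\cdots+u^{q-1})(1+u^q)\bigr)=(1-u)\sum_{j\geq0}u^{(2j+1)q}$. The coefficient of $u^dv^{d+e}$ is then $h^{(q)}_ah^{(q)}_b-h^{(q)}_{a-1}h^{(q)}_{b+1}=s^{(q)}_{(b,a)}$, with $a=d-(2m+2j+1)q$ and $b=e-(2m-2j-1)q$. Your first guess $b=e+N$ omits the $v^{4mq}$ shift, but you then correct it.

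The one difference is how you obtain $\bN_1(t)=\sum_m F^2(\mc{N}_m)t^{4m}$. The paper uses the structure-constant description \eqref{eq:def-Nct} together with Corollary~\ref{cor:prod-dela-odd-Nim}. You instead expand the product \eqref{eq:N1t-char2} and match binary digits. That is valid: a choice of even-size subsets $S_s\subseteq\{1,\dots,n\}$, $s\geq1$, corresponds to exactly one exponent vector with all entries even and Nim-sum zero. It also works directly from the definition of $\bN$ used in the statement of Theorem~\ref{thm:main-nonrec-coh}, so it does not need the Nim-sum rule in the GHM ring.

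What is missing is a resolution of Step 3, which you leave open and hedge between several options. The $q=1$ summand is not something to be reindexed away. Your fallback of shifting $q\mapsto 2q$ cannot work: it would not change the value of that summand, and the factors $\bh^{(q)}$ and $1+\cdots+u^{q-1}$ do not shift with it. Since $\bh^{(1)}=1$, the $q=1$ summand is simply $\bN(u,v)$ itself, and you must show its coefficient of $u^dv^{d+e}$ vanishes when $e\geq d-1$.

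Your first idea does exactly this. From your formulas, $b-a=(e-d)+(4j+2)q\geq(4j+2)q-1\geq1$. For $q=1$ one has $h^{(1)}_k=\delta_{k,0}$, so a nonzero $h^{(1)}_ah^{(1)}_b-h^{(1)}_{a-1}h^{(1)}_{b+1}$ would force $(a,b)=(0,0)$ or $(1,-1)$, both with $b\leq a$. The paper argues more directly: $\bN(u,v)=\frac{u}{1+u}\sum_mF^2(\mc{N}_m)u^{2m}v^{4m}$ contains only monomials $u^dv^{d+e}$ with $d+e=4m$ and $d\geq 2m+1$, i.e.\ $e\leq d-2$.

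The same bound $b\geq1$ shows that no negative-index convention for $s^{(q)}$ ever enters. Terms with $a<0$ vanish, which gives the range $(2m+2j+1)q\leq d$. With this check written out, your argument is complete.
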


Notice that Theorem~\ref{thm:h1coh-char2-nonrecursive} corrects the statement of \cite{GRV}*{Conjecture~5.2}. Revisiting the example from \cite{GRV}*{Section~5.3}, we get for $d=6$ and $e\geq 5$ that $\Lambda_d =  \{(4,0,0), (2,0,0), (2,0,1), (2,1,0)\}$, hence
\[h^1(D^6\mc{R}(e)) = s^{(4)}_{(e+4,2)} + s^{(2)}_{(e+2,4)} + s^{(2)}_{(e+6,0)} + F^4(\mc{N}_1) \cdot s^{(2)}_{(e-2,0)}.\]
The term $s^{(2)}_{(e+6,0)}$, which is just the elementary symmetric polynomial of degree $e+6$, was missing from the original conjecture, and the first time it occurs is when the number of variables is $n\geq e+6 \geq 11$. This explains why the computational verification of the example in loc. cit. was correct up to $n=10$ variables.


\medskip

\noindent{\bf Recursive character formulas.} The functional equation in Theorem~\ref{thm:main-nonrec-coh} yields a recursive procedure for computing cohomology, but it does so through the coefficients of the power series $\bN(u,v)$, which are themselves rather intricate. However, a short argument allows one to recast the recursion entirely in terms of Schur and truncated Schur polynomials, as we explain next. We recall \eqref{eq:def-sqab} and define for $d\geq 0$ and $e\in\bb{Z}$ 
\begin{equation}\label{eq:def-Phi-de} 
\Phi_{d,e} = \sum_{j\geq 0} s^{(p)}_{(e+jp,d-jp)}.
\end{equation}
Observing that in the power series $\bN(u,v)$, the coefficient of $u^d v^{d+e}$ vanishes whenever $e\geq d-1$, we obtain the following, using the notation of \eqref{eq:def-coh-chars} and \eqref{eq:def-Phi-de}.

\begin{theorem}\label{thm:coh-recursion}
 If $e\geq d-1$ then we have for $i=0,1$ that
 \[ h^i(D^d\mc{R}(e)) = \sum_{a=0}^{\lfloor \frac{d}{p}\rfloor}\sum_{b=-1}^{\lfloor \frac{d+e}{p}\rfloor} \Phi_{d-ap,e-bp}\cdot F^p\left(h^i(D^a\mc{R}(b))\right).\]
\end{theorem}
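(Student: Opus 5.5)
We derive Theorem~\ref{thm:coh-recursion} from the functional equation \eqref{eq:func-eqn-Guv} of Theorem~\ref{thm:main-nonrec-coh}, first for $i=1$ and then for $i=0$ by symmetry. The plan has three steps: extract the coefficient of $u^dv^{d+e}$ on both sides of \eqref{eq:func-eqn-Guv}, use the stated vanishing of the corresponding coefficient of $\bN(u,v)$ in the range $e\geq d-1$, and identify the coefficients of the prefactor with the polynomials $\Phi_{d,e}$ of \eqref{eq:def-Phi-de}.

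\textbf{Expanding the prefactor.} Write
\[ \frac{\bh^{(p)}(uv)\,\bh^{(p)}(v)}{1+u+\cdots+u^{p-1}} = \bh^{(p)}(uv)\,\bh^{(p)}(v)\cdot\frac{1-u}{1-u^p}. \]
Expanding, the coefficient of $u^{d'}v^{d'+e'}$ in $\bh^{(p)}(uv)\bh^{(p)}(v)$ is $h^{(p)}_{d'}h^{(p)}_{e'}$.

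Multiplying by $1-u$ replaces $u^{d'}v^{d'+e'}$ by the difference of that term and $u^{d'+1}v^{d'+e'}$. Re-indexing the second term as $u^{D}v^{D+(E)}$ with $D=d'+1$ and $E=e'-1$, the coefficient of $u^Dv^{D+E}$ in $\bh^{(p)}(uv)\bh^{(p)}(v)(1-u)$ becomes
\[ h^{(p)}_D h^{(p)}_E - h^{(p)}_{D-1}h^{(p)}_{E+1} = s^{(p)}_{(E,D)}. \]
Here I will check the index convention of \eqref{eq:def-sqab} carefully, possibly using the symmetry $s^{(p)}_{(a,b)}=-s^{(p)}_{(b-1,a+1)}$.

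Multiplying by $1/(1-u^p)=\sum_j u^{jp}$ then shifts $D\mapsto D-jp$ while keeping the $v$-exponent $D+E$ fixed, so $E\mapsto E+jp$. The coefficient of $u^Dv^{D+E}$ in the full prefactor is therefore
\[ \sum_{j\geq 0} s^{(p)}_{(E+jp,\,D-jp)} = \Phi_{D,E}. \]

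\textbf{Extracting coefficients.} Since $F^p(\bG)=\sum_{a,b} F^p(h^1(D^a\mc{R}(b)))\,u^{ap}v^{(a+b)p}$, the coefficient of $u^dv^{d+e}$ in $\mathrm{prefactor}\cdot F^p(\bG)$ is
\[ \sum_{a,b}\Phi_{d-ap,\,e-bp}\cdot F^p\bigl(h^1(D^a\mc{R}(b))\bigr). \]
Here $d-ap\geq 0$ forces $a\leq\lfloor d/p\rfloor$, and $b\geq -1$ by the range of summation in \eqref{eq:def-Guv}. The upper bound $b\leq\lfloor (d+e)/p\rfloor$ holds because the $v$-exponent $(d+e)-(a+b)p$ must be nonnegative. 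Extra terms beyond these bounds either vanish, since $\Phi$ has negative truncated-degree indices, or lie in the stated range. I will verify that $\Phi_{d',e'}$ vanishes when $d'+e'<0$.

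Combining with the vanishing of the $u^dv^{d+e}$-coefficient of $\bN(u,v)$ for $e\geq d-1$ gives the case $i=1$. The main obstacle is this vanishing claim. To prove it, I would note that each $\bN_c(u^{1/2}v)$ only contributes monomials $u^{k/2}v^k$, which are then multiplied by $u^{1/2}(u^{c/2}-u^{p-c/2})/(1-u^p)$. The $u$-exponent is therefore at least roughly $(k+1+c)/2$ against a $v$-exponent of $k$. Hence $d\geq (d+e+2)/2$, i.e.\ $e\leq d-2$.

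\textbf{The case $i=0$.} I would use $h^0(D^d\mc{R}(e))=h^1(D^{e+1}\mc{R}(d-1))$. After this swap, the hypothesis $e\geq d-1$ becomes $e'=d-1\leq d'-1$, which falls outside the range just treated. So the $i=0$ case needs a separate argument. The most likely route is the analogous coefficient extraction from the functional equation in the other range, where $\bN$ contributes, combined with the corresponding symmetry $\Phi_{d,e}\leftrightarrow\Phi_{e+1,d-1}$ of truncated Schur polynomials. I expect this symmetry bookkeeping to be the delicate point.
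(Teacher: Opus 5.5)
Your $i=1$ argument is correct and is essentially the paper's. Your index convention for $s^{(p)}_{(E,D)}$ is already right. Expanding $1/(1-u^p)$ is the same as the paper's telescoping of $\Phi_{D,E}-\Phi_{D-p,E+p}$. Your bound on $u$-exponents in $\bN(u,v)$ is exactly the paper's observation that $u^dv^{d+e}$ occurs there only when $e\le d-2$.

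The case $i=0$ is a genuine gap: you give no argument, and the route you sketch does not close. After dualizing via \eqref{eq:coh-P-vs-Pdual} you land in the range $e'\le d'-1$, where the coefficients of $\bN(u,v)$ really do contribute, and no symmetry of $\Phi$ absorbs them. Concretely, write $D=d-ap$, $E=e-bp$, and match summation indices by $(a,b)\mapsto(b+1,a-1)$. Then you must compare $\Phi_{D,E}$, from the desired formula, with $\Phi_{E+1-p,D-1+p}$, from the functional equation at $(e+1,d-1)$; this is not the $\Phi_{d,e}\leftrightarrow\Phi_{e+1,d-1}$ you wrote. Since $s^{(p)}_{(y-1,x+1)}=-s^{(p)}_{(x,y)}$, their difference is the two-sided sum $\sum_{j\in\bb{Z}}s^{(p)}_{(E+jp,D-jp)}$. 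This is generally nonzero; for $D=E=0$ it equals $1$. Your route would therefore need an independent identity expressing the $\bN$-coefficients through these sums and $F^p(\bG)$. Given the $i=1$ case, that identity is equivalent to the $i=0$ statement itself, so nothing is gained.

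The missing idea is the Euler characteristic \eqref{eq:chi-DdRe=sed}. Since $h^0(D^a\mc{R}(b))=h^1(D^a\mc{R}(b))+s_{(b,a)}$ for all $a\ge 0$, $b\ge -1$, linearity reduces the claim to showing that the characteristic-free characters satisfy the same recursion, $s_{(e,d)}=\sum_{a,b}\Phi_{d-ap,e-bp}\cdot F^p(s_{(b,a)})$. This holds for all $d,e$, with no restriction $e\ge d-1$. To prove it, set $H(t)=\sum_d h_dt^d=\prod_i(1-tz_i)^{-1}$. Then $\sum_{d,e}s_{(e,d)}u^dv^{d+e}=(1-u)H(uv)H(v)$. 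The prefactor $\bh^{(p)}(uv)\bh^{(p)}(v)/(1+u+\cdots+u^{p-1})$ times $1-u^p$ equals $(1-u)\bh^{(p)}(uv)\bh^{(p)}(v)$. So the identity reduces to $H(t)=\bh^{(p)}(t)\cdot F^p(H(t))$, which is \eqref{eq:def-bhqt} with $q=p$. This is how the paper handles $i=0$.
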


Since no assumption $b\geq a-1$ is imposed on the right-hand side of the equation above, the formula may not at first appear to furnish a recursion. However, the recursive nature of Theorem~\ref{thm:coh-recursion} follows from \eqref{eq:coh-P-vs-Pdual}, which allows us to restrict when convenient to $e\geq d-1$ (see also Theorem~\ref{thm:h1coh-char2-nonrecursive}). Moreover, if we consider the \defi{complete symmetric polynomials $h_d$}, and the \defi{Schur polynomials $s_{(a,b)}$}, defined~by
\begin{equation}\label{eq:def-hd-sab}
h_d = \sum_{\substack{i_1+\cdots+i_n = d \\ i_j\geq 0}}z_1^{i_1}\cdots z_n^{i_n}\quad\text{and}\quad s_{(a,b)} = h_a\cdot h_b-h_{a+1}\cdot h_{b-1},
\end{equation}
then for $d\geq 0$, $e\geq -1$ we have
\begin{equation}\label{eq:chi-DdRe=sed}
h^0(D^d\mc{R}(e)) - h^1(D^d\mc{R}(e)) = s_{(e,d)}.
\end{equation}
If we assume further that $d<p$, the cohomology is given  (as in characteristic zero) by
\begin{equation}\label{eq:h01-for-small-d} 
h^0(D^d\mc{R}(e)) = \begin{cases} 
s_{(e,d)} & \text{if }e\geq d \\
0 & \text{if }e\leq d-1
\end{cases}
\quad\text{and}\quad
h^1(D^d\mc{R}(e)) = \begin{cases} 
s_{(d-1,e+1)} & \text{if }e\leq d-2 \\
0 & \text{if }e\geq d-1
\end{cases}
\end{equation}

\begin{example}\label{ex:small-rec-h1Ddr}
    Suppose that $n=5$, $p=2$. Using Theorem~\ref{thm:coh-recursion}, along with \eqref{eq:h01-for-small-d}, we get
    \[ 
    \begin{aligned}
    h^1(D^3\mc{R}(2)) &= \Phi_{1,4}\cdot F^p\left(h^1(\mc{R}(-1))\right) = \Phi_{1,4} \cdot F^p(s_{(0,0)}) = h^{(p)}_4\cdot h^{(p)}_1 - h^{(p)}_5\cdot h^{(p)}_0 \\
    &= \left(\sum z_1^2 z_2z_3z_4\right) + 4\cdot z_1z_2z_3z_4z_5,
    \end{aligned}
    \]
    where the above sum is over the $\mf{S}_5$-orbit of the monomial $z_1^2 z_2z_3z_4$. It follows that the cohomology group $H^1(\PP^4,D^3\mc{R}(2))$ is a vector space of dimension $24$. If instead we take $d=2$, $e=3$, then we get
    \[h^1(D^2\mc{R}(3)) = \Phi_{0,5}\cdot F^p\left(h^1(\mc{R}(-1))\right) = z_1z_2z_3z_4z_5,\]
    so $H^1(\PP^4,D^2\mc{R}(3))$ is $1$-dimensional. In characteristic $p=3$ we have
    \[h^1(D^3\mc{R}(2)) = \Phi_{0,5}\cdot F^p\left(h^1(\mc{R}(-1))\right) = h^{(3)}_5 = \left(\sum z_1^2 z_2^2z_3\right) + \left(\sum z_1^2 z_2z_3z_4\right) +  z_1z_2z_3z_4z_5\]
    which is the character of a $51$-dimensional representation, while $h^1(D^2\mc{R}(3))=0$. For all other characteristics, we have $h^1(D^3\mc{R}(2)) = h^1(D^2\mc{R}(3)) = 0$.
\end{example}

As an application of Theorem~\ref{thm:coh-recursion}, we prove \cite{GRV}*{Conjecture~5.1} in Theorem~\ref{thm:small-weights}.

\begin{remark}\label{rem:n=2}
When $n=2$, we have that $V=\kk^2$ and $X=\PP^1$ is the projective line. Under this identification, $\mc{O}_X(1,0)=\mc{O}_{\PP^1}(1)$ and $\mc{O}_X(0,1)=\bw^2 V^{\vee}\oo\mc{O}_{\PP^1}(1)$. Moreover, $\mc{R}=\bw^2 V\oo\mc{O}_{\PP^1}(-1)$ and \eqref{eq:h01-for-small-d} holds for all $d\geq 0$, $e\geq -1$. Theorems~\ref{thm:main-nonrec-coh}--\ref{thm:coh-recursion} apply as stated for $n=2$, and in fact this is the reason for imposing the condition $e\geq -1$ in \eqref{eq:def-Guv} (when $n\geq 3$ one has $h^1(D^d\mc{R}(e))=0$ for $e\leq -2$). Although the cohomology of line bundles on $\PP^1$ is classical, it is not entirely obvious that it fits into the uniform framework of  Theorems~\ref{thm:main-nonrec-coh}--\ref{thm:coh-recursion}.
\end{remark}

\noindent{\bf The graded Green--Han--Monsky representation ring.} Following \cite{green}, \cite{han-monsky}, we consider the category of finite length graded $\kk[T]$-modules $M$, and define the tensor product $M\oo_{\kk} N$ by letting $T$ act~via
\[ T \cdot (m\oo n) = Tm\oo n + m\oo Tn.\]
This induces a natural multiplication on the set of isomorphism classes, and we refer to the resulting ring $\Delta$ as the \defi{graded Green--Han--Monsky (GHM) representation ring}. We write $\delta_d$ for $\kk[T]/(T^d)$ (and for its class in~$\Delta$), and note that every indecomposable object in the category is isomorphic to $\delta_d(-j)$ for some $j\in\bb{Z}$, where $j$ denotes the degree of its cyclic generator. We are particularly interested in the $n$-fold product $\delta_{a_1}\cdots\delta_{a_n}$, which represents the algebra \eqref{eq:defA} viewed as a $\kk[T]$-module by letting $T=T_1+\cdots+T_n$. The following example shows that the multiplication depends in subtle ways on the characteristic $p$ of $\kk$: one has
\[ \delta_3\delta_5 = \begin{cases}
\delta_7 + \delta_4(-1) + \delta_4(-2) & \text{if }p = 2, \\
\delta_6 + \delta_6(-1) + \delta_3(-2) & \text{if }p = 3, \\
\delta_5 + \delta_5(-1) + \delta_5(-2) & \text{if }p = 5, \\
\delta_7 + \delta_5(-1) + \delta_3(-2) & \text{otherwise.} \\
\end{cases}
\]
Multiplication in $\Delta$ has many equivalent interpretations; in particular, it may be viewed as the problem of determining the Jordan canonical form of the tensor product of two Jordan blocks. From this perspective, in characteristic zero the resulting decomposition already appears in Littlewood’s classical text \cite{littlewood}*{Section~10.2}. We record some properties of multiplication in $\Delta$ in Section~\ref{sec:Han-Monsky}, where we prove in particular the following (see Proposition~\ref{prop:delcj-in-product} and Corollary~\ref{cor:prod-dela-odd-Nim}).

\begin{theorem}\label{thm:delcj-in-product}
    \begin{enumerate}
    \item If $p\nmid c$ and $\delta_c(-j)$ is a summand of $\delta_{a_1}\cdots\delta_{a_n}$ then
    \[ c+2j = a_1+\cdots+a_n - (n-1).\]
    \item If $p=2$ then $\delta_{2c+1}(-j)$ appears as a summand of $\delta_{2a_1+1}\delta_{2a_2+1}\cdots\delta_{2a_n+1}$ if and only if
    \[ c = a_1\oplus a_2\oplus\cdots\oplus a_n \quad\text{ and }\quad j = a_1+\cdots+a_n-c.\]
    \end{enumerate}
\end{theorem}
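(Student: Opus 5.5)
The plan is to prove part~(1) by combining graded self-duality of the Artinian algebra $A$ from \eqref{eq:defA} with a projectivity argument over $\kk[T]/(T^p)$. Part~(2) then reduces, through part~(1) and a multiplicativity statement, to a known two-factor computation of Han--Monsky.

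\emph{Part (1): reduction to two factors.} Set $N=\sum_i(a_i-1)$, so that $\delta_{a_1}\cdots\delta_{a_n}$ is the module $A$ with $T=T_1+\cdots+T_n$, and the target identity is $c+2j=N+1$. In words: every summand of length prime to $p$ sits symmetrically around degree $N/2$.
\begin{itemize}
\item First I show that $\kk[T]/(T^p)$, with $T$ primitive, is a Hopf algebra. It is the restricted enveloping algebra of a one-dimensional restricted Lie algebra with $T^{[p]}=0$. Since $T^p$ acts on $M\oo N$ by $T^pm\oo n+m\oo T^pn$, the tensor structure in $\Delta$ restricts compatibly.
\item Consequently, a tensor product with a projective $\kk[T]/(T^p)$-module is projective. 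The module $\delta_{mp}$ is free over $\kk[T]/(T^p)$, of rank $m$. By contrast, $\delta_c$ with $p\nmid c$ has the non-free summand $\delta_{\bar c}$, where $\bar c$ is the residue of $c$ mod $p$.
\item Hence every summand of $\delta_{mp}(-j)\cdot M$ has length divisible by $p$. So the subgroup $\Delta_p$ spanned by the classes $\delta_{mp}(-j)$ is an ideal.
\item I then prove the statement by induction on $n$, in the stronger form that the product equals a sum of centered terms plus an element of $\Delta_p$. Here ``centered'' means that every summand $\delta_c(-j)$ satisfies $c+2j=N+1$.
\item The inductive step multiplies $\delta_{a_1}\cdots\delta_{a_{n-1}}$ by $\delta_{a_n}$. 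Terms in $\Delta_p$ stay in $\Delta_p$. A centered term $\delta_c(-j)$ with $p\nmid c$ is handled by the two-factor case applied to $\delta_c\delta_{a_n}$, shifted by $j$. The centers add correctly, since $(c+a_n-1)+2j=(N_{n-1}+1-2j)+a_n-1+2j=N_n+1$.
\end{itemize}
So everything reduces to $n=2$: I must show that each summand $\delta_c(-j)$ of $\delta_a\delta_b$ with $p\nmid c$ satisfies $c+2j=a+b-1$.

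\emph{The two-factor case, which I expect to be the main obstacle.} For $n=2$ I would first use Gorenstein duality. The algebra $A=\kk[T_1,T_2]/(T_1^a,T_2^b)$ satisfies $A\cong \op{Hom}_\kk(A,\kk)(-N)$ as graded $A$-modules, hence as graded $\kk[T]$-modules. Duality sends $\delta_c(-j)$ to $\delta_c(-(N+1-c-j))$, so the summands of each length $c$ occur in pairs $j\leftrightarrow N+1-c-j$.

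Duality alone only gives this pairing, not centering, and the centering is the genuine content. To get it, I would compute ranks of $T^k\colon A_i\to A_{i+k}$ through the determinants of Toeplitz-type matrices of binomial coefficients. I expect the non-centered summands to arise exactly from binomial coefficients vanishing mod $p$, which should force $p\mid c$. In practice I would use the explicit Han--Monsky/Green description of $\delta_a\delta_b$, recorded in Section~\ref{sec:Han-Monsky}. That description expresses $\delta_a\delta_b$, after writing $a$ and $b$ in base $p$, as a centered sum plus terms in $\Delta_p$. The grading shifts should be checked against the example $\delta_3\delta_5$ for $p=2,3,5$.

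\emph{Part (2).} Take $p=2$. By part~(1), any summand $\delta_{2c+1}(-j)$ of $\prod_i\delta_{2a_i+1}$ satisfies $2c+1+2j=\sum_i(2a_i+1)-(n-1)=2\sum_i a_i+1$, so $j=\sum_i a_i-c$. It remains to show two things: there is exactly one odd-length summand, and $c=a_1\oplus\cdots\oplus a_n$.

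Since $\Delta_2$ is an ideal, it suffices to have $\delta_{2a+1}\delta_{2b+1}\equiv\delta_{2(a\oplus b)+1}(-j)$ modulo $\Delta_2$. This is Han--Monsky's char-$2$ rule. The induction on $n$ then goes through: the odd part is multiplied each time by a single odd $\delta$, and Nim-sum is associative. If I need to reprove the two-factor rule, I would induct on the binary length. The identification $\delta_{2a+1}\cong \delta_1\oplus(\text{Frobenius twist of }\delta_{a})\cdot\delta_2$-type decompositions reduces the digits one at a time, with the last digit contributing $0\oplus0=0$ in the odd part.
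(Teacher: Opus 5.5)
Your overall architecture matches the paper's. You use induction on $n$ through the ideal $I_p$ of classes of length divisible by $p$, which reduces part~(1) to the two-factor case (Lemma~\ref{lem:dela-delb-ineq}) and part~(2) to the two-factor Nim rule. There is, however, a genuine gap: the step you flag as the main obstacle is never carried out.

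The descriptions of $\delta_a\delta_b$ in the literature (Green, Renaud, Glasby--Praeger--Xia, Han--Monsky) live in the ungraded ring $\Delta^u$. They record lengths $\mu_i$ and multiplicities $m_i$, not degree shifts. So saying that this description ``expresses $\delta_a\delta_b$ as a centered sum plus terms in $\Delta_p$'' assumes exactly the graded content to be proved. The Toeplitz-determinant route is only a heuristic (``I expect\dots should force'').

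What is missing is a mechanism that pins down the shifts. The paper supplies it in Proposition~\ref{prop:basic-mult-GHM}. For $a\leq b$ one has $M/TM\cong\kk[T_1]/(T_1^a)$, so the minimal generators of $M=\kk[T_1,T_2]/(T_1^a,T_2^b)$ sit one in each degree $0,\dots,a-1$. The lengths of the corresponding cyclic summands are weakly decreasing, $c_0\geq c_1\geq\cdots\geq c_{a-1}$. Hence the graded decomposition is read off from the ungraded one by listing the parts in decreasing order. Then \cite{GPX}*{Theorem~4} (parts prime to $p$ have multiplicity one) and \cite{GPX}*{Theorem~5} ($\mu_i=a+b-1-2(m_1+\cdots+m_{i-1})$) give $c+2j=a+b-1$.

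Your own duality observation would also close the gap with less input. If $p\nmid c$, then $\delta_c$ occurs exactly once in $\delta_a\delta_b$ by \cite{GPX}*{Theorem~4}. The involution $j\mapsto N+1-c-j$ must therefore fix its unique shift, i.e.\ $c+2j=N+1=a+b-1$. Without the multiplicity-one input, the pairing alone proves nothing, as you yourself note.

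Second, your proof that $I_p$ is an ideal does not work as written. Projectivity over $\kk[T]/(T^p)$ only makes sense for modules killed by $T^p$. For $m\geq 2$ the module $\delta_{mp}$ is not a $\kk[T]/(T^p)$-module at all, so ``free of rank $m$'' is meaningless. The argument only shows that $\delta_p\cdot M$ is a multiple of $\delta_p$ when $T^pM=0$.

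You can instead cite \cite{han-monsky}*{Theorem~3.6} or \cite{GPX}*{Theorem~2}, as the paper does. Alternatively, argue with the Hopf algebra $H=\kk[T]/(T^q)$ ($q=p^s$ large, $T$ primitive) and its Hopf subalgebra $K=\kk[T^p]/(T^q)$:
\begin{itemize}
\item $\delta_{mp}\cong H\oo_K\kk[T^p]/(T^{mp})$ is induced from $K$;
\item the projection formula gives $(H\oo_K W)\oo M\cong H\oo_K(W\oo M|_K)$;
\item every module induced from $K$ is a sum of $\delta$'s of length divisible by $p$.
\end{itemize}

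Part~(2) is then fine, provided you take the two-factor Nim rule from the literature. The paper derives it in Proposition~\ref{prop:delab-odd-Nim} from \cite{GPX}*{Proposition~6} and \cite{renaud}*{Theorem~2}, by induction on binary digits. Your suggested reproof via a decomposition ``$\delta_{2a+1}\cong\delta_1\oplus\cdots$'' is not meaningful, since $\delta_{2a+1}$ is indecomposable.
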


The significance of Theorem~\ref{thm:delcj-in-product} is explained in the inductive procedure that we employ to study cohomology. It turns out that if we fix a weight $\ul{a}$, the non-triviality of connecting homomorphisms in an appropriate long exact sequence in cohomology can be rephrased in terms of the presence of certain summands in $\delta_{a_1}\cdots\delta_{a_n}$ (see Lemma~\ref{lem:rank-partialM}). Notice that part (2) of Theorem~\ref{thm:delcj-in-product} implies that in characteristic $p=2$, $\delta_1$ appears as a summand (with some shift) in $\delta_{2a_1+1}\delta_{2a_2+1}\cdots\delta_{2a_n+1}$ if and only if $a_1\oplus a_2\oplus\cdots\oplus a_n=0$, which means that $z_1^{a_1}\cdots z_n^{a_n}$ appears as a term in a Nim polynomial of appropriate degree. This connection leads to Theorem~\ref{thm:h1coh-char2-nonrecursive} and is discussed in Section~\ref{sec:coh-characters}.

Theorem~\ref{thm:delcj-in-product} has two key features: the relevant summands $\delta_c$ have length coprime to $p$, and for these summands the degree shifts are uniquely determined. This leads us to consider the ungraded quotient $\ol{\Delta}^u$ where $\delta_c=\delta_c(-j)$, and $\delta_{pm}=0$ for all $m$, which we call the \defi{reduced GHM ring}. It follows from \cite{CEO}*{Section~5} that $\ol{\Delta}^u$ is an infinite tensor product of Verlinde algebras for $\op{SU}(2)$ \cite{verlinde}, and this structure (which we recall in Section~\ref{subsec:redGHM=tensor-Verlinde}) is then the source of the trigonometric character formulas for cohomology.

\medskip

\noindent {\bf Related work.} Prior to the present work, recursive formulas were known only in the case of the $3$-dimensional flag variety \cites{donkin,liu,gao-raicu}, for which non-recursive formulas were explored in \cite{odorney}. Some additional partial results for the incidence correspondence in higher dimensions were available in \cites{liu-flag1,liu-polo,gao-raicu}. A complete characterization of the vanishing and nonvanishing behavior of cohomology was obtained in \cite{gao-raicu}, extending the dimension $3$ results of Griffith \cite{griffith}. Partial results on the vanishing and nonvanishing of cohomology for flag varieties of type $B_2$ and $G_2$ may be found in \cite{andersen-b2}*{Section~5} and \cites{humphreys-G2,and-kan}. For a collection of open questions, several of which are resolved in this paper, we refer the reader to \cite{GRV}.

\medskip

\noindent{\bf Organization.} In Section~\ref{sec:prelim} we collect a number of elementary observations relating the cohomology of line bundles on the incidence correspondence, the cohomology tables of $D^d\Omega$, and the Artinian algebras~\eqref{eq:defA}. In Section~\ref{sec:Han-Monsky} we establish the necessary properties of multiplication in the GHM ring, and in particular verify Theorem~\ref{thm:delcj-in-product}. Section~\ref{sec:hom-kT} discusses the basic homological algebra behind understanding weight spaces for cohomology, and provides the main connection with the multiplication in the GHM ring. In Section~\ref{sec:Verlinde} we explain the link between GHM rings and Verlinde algebras, and introduce the symmetric polynomials encoding their higher structure constants. The final Section~\ref{sec:coh-characters} combines the prior results in order to obtain the computation of cohomology characters, proving Theorems~\ref{thm:main-nonrec-coh}, \ref{thm:h1coh-char2-nonrecursive}, and~\ref{thm:coh-recursion}, as well as \cite{GRV}*{Conjecture~5.1}.

\section{Preliminaries}\label{sec:prelim}

Throughout the paper we let $V=\kk^n$, $n\geq 2$, where $\kk$ is an algebraically closed field of characteristic $p>0$. This assumption is in place in order to avoid some technicalities (for instance, if we were working over a non-closed field, representations of $\GL_n$ would have to be interpreted as algebraic representations of the group scheme $\GL_n$), but since the cohomology groups we consider are defined over the prime field and commute with extensions of the base field, the results will only depend on the characteristic $p$. We let $S=\Sym(V)\simeq\kk[x_1,\cdots,x_n]$ and consider the projective space $\PP=\bb{P}V=\op{Proj}(S)$. We similarly set $\PP^{\vee} = \op{Proj}(\kk[y_1,\cdots,y_n])$, where the variables $y_i$ are dual to the $x_i$. The choice of basis on $V$ determines a maximal torus $(\kk^{\times})^n$ inside $\GL(V)\simeq\GL_n$ which acts with weight $\vec{e}_i$ on $x_i$, and weight $-\vec{e}_i$ on $y_i$, where $\vec{e}_i$ denotes the $i$-th standard unit vector. This action extends to all cohomology groups considered in this paper.

\subsection{The incidence correspondence and some basic cohomology characters.}
\label{subsec:inc-cor-coh}
We let
\[Y = \PP \times \PP^{\vee},\quad X \overset{\eqref{eq:def-omega}}{=} \{\omega=0\}\subset Y,\quad \mc{O}_X(m_1,m_2)=\mc{O}_Y(m_1,m_2)_{|_X}.\]
We are interested in the characters of the cohomology groups of $\mc{O}_X(m_1,m_2)$, and we start by recalling the well-known description for $\mc{O}_Y(m_1,m_2)$. With notation \eqref{eq:def-hd-sab}, the non-zero cohomology characters are:
\begin{itemize}
    \item If $m_1,m_2\geq 0$, then $h^0(\mc{O}_Y(m_1,m_2))=h_{m_1}(\ul{z})\cdot h_{m_2}(\ul{z}^{-1})$, where $\ul{z}^{-1}=(z_1^{-1},\cdots,z_n^{-1})$.
    \item If $m_1\geq 0$, $m_2\leq -n$ then $h^{n-1}(\mc{O}_Y(m_1,m_2))=h_{m_1}(\ul{z})\cdot h_{-m_2-n}(\ul{z})\cdot z_1\cdots z_n$.
    \item If $m_1\leq -n$, $m_2\geq 0$ then $h^{n-1}(\mc{O}_Y(m_1,m_2))=h_{-m_1-n}(\ul{z}^{-1})\cdot h_{m_2}(\ul{z}^{-1})\cdot z_1^{-1}\cdots z_n^{-1}$.
    \item If $m_1,m_2\leq -n$ then $h^{2n-2}(\mc{O}_Y(m_1,m_2))=h_{-m_1-n}(\ul{z}^{-1})\cdot h_{-m_2-n}(\ul{z})$.
\end{itemize}
We can then study the cohomology of $\mc{O}_X(m_1,m_2)$ using the long exact sequence associated with
\[0 \lra \mc{O}_Y(m_1-1,m_2-1) \lra \mc{O}_Y(m_1,m_2) \lra \mc{O}_X(m_1,m_2)\lra 0.\]
Indeed, it follows that (see also \cite{gao-raicu}*{Introduction}) 
\begin{itemize}
    \item If $m_1,m_2\geq 0$ then \[h^0(\mc{O}_X(m_1,m_2))=h_{m_1}(\ul{z})\cdot h_{m_2}(\ul{z}^{-1})-h_{m_1-1}(\ul{z})\cdot h_{m_2-1}(\ul{z}^{-1}).\]
    \item If $m_1,m_2\leq -n+1$ then 
    \begin{equation}\label{eq:h2n-3-DdRe}
    h^{2n-3}(\mc{O}_X(m_1,m_2))=h_{-m_1+1-n}(\ul{z}^{-1})\cdot h_{-m_2+1-n}(\ul{z})-h_{-m_1-n}(\ul{z}^{-1})\cdot h_{-m_2-n}(\ul{z}).
    \end{equation}
    \item In addition to the cases above, $h^i(\mc{O}_X(m_1,m_2))$ may only be non-zero for $i=n-2,n-1$, when $m_1\geq 0$ and $m_2\leq -n+1$, or $m_1\leq -n+1$ and $m_2\geq 0$. These are the interesting cohomology groups where dependence of the characteristic may occur. Recalling notation \eqref{eq:def-Lambda} and writing ${}^{\vee}:\Lambda\lra\Lambda$ for the involution that sends $z_i\lra z_i^{-1}$, we have
\[h^i(\mc{O}_X(m_1,m_2)) = h^i(\mc{O}_X(m_2,m_1))^{\vee}\]
so we may assume without loss of generality that $m_1\geq 0$, $m_2\leq -n+1$. 
\end{itemize}

\subsection{Infinitesimal thickenings and Frobenius.} 
\label{subsec:thickX}

We consider more generally the infinitesimal thickenings $X=X^{(1)}\subset X^{(2)}\subset\cdots\subset X^{(p)}$ of $X$ inside~$Y$, defined by the short exact sequences
\begin{equation}\label{eq:def-ses-Xr}
    0\lra \mc{O}_Y(-r,-r) \overset{\cdot\omega^r}{\lra} \mc{O}_Y \lra \mc{O}_{X^{(r)}} \lra 0.
\end{equation}
If we write $\mc{I}$ for the ideal sheaf of $X$ inside $X^{(p)}$ then $\mc{I}^{r-1}/\mc{I}^r=\mc{O}_X(-r+1,-r+1)$ is the ideal sheaf of $X^{r-1}$ inside $X^r$, and we have a short exact sequence
\[0\lra \mc{O}_X(m_1-r+1,m_2-r+1) \lra \mc{O}_{X^{(r)}}(m_1,m_2) \lra \mc{O}_{X^{(r-1)}}(m_1,m_2) \lra 0,\text{ for }r=2,\cdots,p.\]
It follows that $\mc{O}_{X^{(r)}}(m_1,m_2)$ has a filtration with composition factors $\mc{O}_X(m_1-s,m_2-s)$, for $s=0,\cdots,r-1$. In particular, the discussion in Section~\ref{subsec:inc-cor-coh} implies that if $m_1\geq r-n+1$, $m_2\leq -1$, then the cohomology of $\mc{O}_{X^{(r)}}(m_1,m_2)$ can only occur in degrees $n-2,n-1$, and we have an exact sequence
\begin{equation}\label{eq:les-coh-thick}
\begin{aligned}
   0 &\to H^{n-2}\left(\mc{O}_X(m_1-r+1,m_2-r+1)\right) \to H^{n-2}\left(\mc{O}_{X^{(r)}}(m_1,m_2)\right) \to H^{n-2}\left(\mc{O}_{X^{(r-1)}}(m_1,m_2)\right) \to  \\
   & \to H^{n-1}\left(\mc{O}_X(m_1-r+1,m_2-r+1)\right) \to H^{n-1}\left(\mc{O}_{X^{(r)}}(m_1,m_2)\right) \to H^{n-1}\left(\mc{O}_{X^{(r-1)}}(m_1,m_2)\right) \to 0
\end{aligned}
\end{equation}
We consider next
\begin{equation}\label{eq:defY-Y'-frob}
    Y' = \bb{P}(F^pV) \times \bb{P}(F^pV^{\vee}),\quad\text{and let }\varphi:Y \lra Y'\text{ be the Frobenius morphism},
\end{equation}
where $F^p$ denotes the Frobenius power subfunctor of $\Sym^p$. 
We can define by analogy the incidence correspondence $X'\subset Y'$, and note that the sequence \eqref{eq:def-ses-Xr} for $r=p$ is obtained as the pull-back under $\varphi$ for the defining sequence
\[0\lra \mc{O}_{Y'}(-1,-1) \lra \mc{O}_{Y'}\lra \mc{O}_{X'} \lra 0,\]
and in particular $\mc{O}_{X^{(p)}}=\varphi^*\mc{O}_{X'}$. It is easy to relate the cohomology characters on $X$ and $X'$,
\begin{equation}\label{eq:cohX-vs-X'}
    h^i(\mc{O}_{X'}(m_1,m_2)) = F^p\left(h^i(\mc{O}_{X}(m_1,m_2))\right)\quad\text{ for all }i,m_1,m_2,
\end{equation}
which combined with a complete understanding of the connecting homomorphism in \eqref{eq:les-coh-thick}, for each $r=2,\cdots,p$, will provide the key technical input for our work, leading to the main character formulas in this paper.

\subsection{Divided powers of the cotangent bundle}
\label{subsec:div-pows-coh}

Recalling the tautological sequence \eqref{eq:ses-on-PV} and the notation \eqref{eq:def-coh-chars}, one has (see \cite{gao-raicu}*{(2.12)}, \cite{GRV}*{Section~5}) that
\begin{equation}\label{eq:hiDdre=OXlb}
    h^i(D^d\mc{R}(e)) = h^i(D^d\Omega(d+e)) = h^{i+n-2}(\mc{O}_X(e+1,-d+1-n))\cdot z_1^{-1}\cdots z_n^{-1},\text{ for }d\geq 0,e\in\bb{Z}.
\end{equation}
If $e\geq -1$ (so that $m_1=e+1\geq 0$, $m_2=-d+1-n\leq -n+1$) these groups can only be non-zero for $i=0,1$. Combining this with Serre duality on $X$, where the canonical line bundle is $\omega_X=\mc{O}_X(1-n,1-n)$, we get
\begin{equation}\label{eq:coh-P-vs-Pdual}
h^i(D^d\mc{R}(e)) = h^{1-i}(D^{e+1}\mc{R}(d-1))\quad\text{for }i=0,1,\ d\geq 0,\ e\geq -1.
\end{equation}
Theorem~\ref{thm:main-nonrec-coh} is concerned with describing $h^1(D^d\mc{R}(e))$, which then determines $h^0(D^d\mc{R}(e))$ by \eqref{eq:coh-P-vs-Pdual}.

When $e<-1$, $h^i(D^d\mc{R}(e))$ can only be non-zero for $i=n-1$. In light of \eqref{eq:h2n-3-DdRe}, \eqref{eq:hiDdre=OXlb}, we have
\[h^{n-1}(D^d\mc{R}(e)) = \frac{h_{-e-n}(\ul{z}^{-1})\cdot h_{d}(\ul{z})-h_{-e-1-n}(\ul{z}^{-1})\cdot h_{d-1}(\ul{z})}{z_1\cdots z_n}.\]

\subsection{Concrete realization of cohomology}
\label{subsec:concrete-coh}

We consider the polynomial ring $R = \kk[x_1,\cdots,x_n,y_1,\cdots,y_n]$, and the local cohomology module $L=H^n_{(y_1,\cdots,y_n)}(R)$, which has a coordinate independent description as
\[L= \bigoplus_{\substack{m_1\geq 0 \\ m_2\leq -n}}H^{n-1}(Y,\mc{O}_{Y}(m_1,m_2)) = \bigoplus_{\substack{m_1\geq 0 \\ m_2\leq -n}} \Sym^{m_1}V \oo D^{-m_2-n}V \oo \bw^n V.\]
We write $L_{m_1,m_2}=H^{n-1}(Y,\mc{O}_{Y}(m_1,m_2))$, and for $m_1=e$, $m_2=-d-n$, $d,e\geq 0$, we get
\[L_{e,-d-n} = H^{n-1}(Y,\mc{O}_Y(e,-d-n)) = \bigoplus_{\substack{e_i,d_j\geq 0 \\ d_1+\cdots+d_n=d,\\ e_1+\cdots+e_n=e}} \kk\cdot\frac{x_1^{e_1}\cdots x_n^{e_n}}{y_1^{1+d_1}\cdots y_n^{1+d_n}}.\]
From the long exact sequence associated to (an appropriate twist of) \eqref{eq:def-ses-Xr}, we get an exact sequence
\begin{equation}\label{eq:mult-omegar-Mde} 
 0\lra H^{n-2}\left(\mc{O}_{X^{(r)}}(e,-d-n)\right) \lra L_{e-r,-d-n-r}\overset{\omega^r}{\lra} L_{e,-d-n} \lra H^{n-1}\left(\mc{O}_{X^{(r)}}(e,-d-n)\right) \lra 0,
\end{equation}
provided $n\geq 3$, or $n=2$ and $e-r\geq -1$, and in particular computing cohomology of line bundles on $X$ (for $r=1$ and $d,e\geq 0$) amounts to understanding the rank of multiplication by $\omega$ on $L$.

\subsection{Monomial complete intersections}
\label{subsec:mon-CI}

We consider the closely related module $M = \bw^n V^{\vee} \oo L$, which differs from $L$ only in its equivariant structure. Relative to the grading induced by the torus action, we get for $\ul{a}=(a_1,\cdots,a_n)\in\bb{Z}^n$ and $\ul{1}=(1,\cdots,1)$, that
\[M_{\ul{a}} = 0\text{ if some $a_i<0$, and otherwise } M_{\ul{a}} = L_{\ul{a}+\ul{1}} = \bigoplus_{d_i+e_i=a_i}\kk\cdot\frac{x_1^{e_1}\cdots x_n^{e_n}}{y_1^{1+d_1}\cdots y_n^{1+d_n}}. \]
We write $T_i = x_iy_i$ and consider the polynomial subalgebra $\kk[T_1,\cdots,T_n]$ of $R$. It acts on $M$ by preserving the $\bb{Z}^n$-graded components, since $\deg(T_i)=\vec{0}$. If $a_i\geq 0$, each $M_{\ul{a}}$ is then a cyclic $\kk[T_1,\cdots,T_n]$-module, given~by
\begin{equation}\label{eq:def-Ma}
M_{\ul{a}} = \kk[T_1,\ldots,T_n]\cdot\frac{1}{y_1^{1+a_1}\cdots y_n^{1+a_n}} \simeq \kk[T_1,\cdots,T_n]/\langle T_1^{1+a_1},\ldots,T_n^{1+a_n}\rangle.
\end{equation}
Multiplication by $\omega$ on $M_{\ul{a}}$ translates via this identification into multiplication by $T=T_1+\cdots+T_n$. If we view $M_{\ul{a}}$ as a standard graded Artinian algebra (where $\deg(T_i)=1$), and if $d+e=a_1+\cdots+a_n$, $e\geq -1$, then 
\begin{equation}\label{eq:H01=Hn-2n-1}
\begin{aligned}
    H^0(\PP,D^d\mc{R}(e))_{\ul{a}} &= H^{n-2}(X,\mc{O}_X(e+1,-d+1-n))_{\ul{a}+\ul{1}} = (0 : T)_e,\quad\text{and}\\
    H^1(\PP,D^d\mc{R}(e))_{\ul{a}} &= H^{n-1}(X,\mc{O}_X(e+1,-d+1-n))_{\ul{a}+\ul{1}} = \left(\frac{M_{\ul{a}}}{T\cdot M_{\ul{a}}}\right)_{e+1}.
\end{aligned}
\end{equation}
More generally, if we restrict \eqref{eq:mult-omegar-Mde} to multidegree $\ul{1}+\ul{a}$, then we get an exact sequence 
\begin{equation}\label{eq:mult-omegar-Ma} 
 0\lra H^{n-2}\left(\mc{O}_{X^{(r)}}(e,-d-n)\right)_{\ul{1}+\ul{a}} \lra (M_{\ul{a}})_{e-r} \overset{T^r}{\lra} (M_{\ul{a}})_e \lra H^{n-1}\left(\mc{O}_{X^{(r)}}(e,-d-n)\right)_{\ul{1}+\ul{a}} \lra 0
\end{equation}
if $n\geq 3$, or $n=2$ and $e-r\geq -1$. We set $m_1=e$, $m_2=-d-n$ and restrict \eqref{eq:les-coh-thick} to multidegree $\ul{a}+\ul{1}$ to~get
\begin{equation}\label{eq:les-Tor01-M/TrM}
\begin{aligned}
   0 &\lra(0:_{M_{\ul{a}}} T)_{e-r} \lra (0:_{M_{\ul{a}}} T^r)_{e-r} \overset{\cdot T}{\lra} (0:_{M_{\ul{a}}} T^{r-1})_{e-r+1} \overset{\partial}{\lra}  \\
   &\lra \left(\frac{M_{\ul{a}}}{TM_{\ul{a}}}\right)_{e-r+1} \overset{T^{r-1}}{\lra} \left(\frac{M_{\ul{a}}}{T^rM_{\ul{a}}}\right)_e \lra \left(\frac{M_{\ul{a}}}{T^{r-1}M_{\ul{a}}}\right)_e \lra 0
\end{aligned}
\end{equation}
Although $e-r\geq -1$ is needed in \eqref{eq:mult-omegar-Ma} when $n=2$, the sequence \eqref{eq:les-Tor01-M/TrM} is always exact (see \eqref{eq:les-Tor-M-kT}). While the infinitesimal thickenings $X^{(r)}$ provide the natural geometric framework, it is more convenient to work in the algebraic setting furnished by \eqref{eq:les-Tor01-M/TrM}. The key is then to understand the connecting map $\partial$. This is achieved through a precise connection with multiplication in the GHM ring, which we develop in Section~\ref{sec:hom-kT}.

\section{The Green--Han--Monsky representation ring}
\label{sec:Han-Monsky}

We recall the discussion of the graded Green--Han--Monsky representation ring $\Delta$ from the Introduction, and proceed to establish some basic results that will be used for our cohomological computations. 

\begin{proposition}\label{prop:basic-mult-GHM}
    If $1\leq a\leq b$ then there exist positive integers $c_j=c_j(a,b)$, $0\leq j<a$, such that
\begin{equation}\label{eq:dela-delb-general} \delta_a\delta_b=\sum_{j=0}^{a-1}\delta_{c_j}(-j).
\end{equation}
Moreover, the integers $c_j$ satisfy the inequalities $c_0 \geq c_1 \geq \cdots \geq c_{a-1}$.
\end{proposition}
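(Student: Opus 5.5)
We study $M=\delta_a\delta_b=\kk[T_1,T_2]/\langle T_1^a,T_2^b\rangle$ as a graded $\kk[T]$-module, where $T=T_1+T_2$. Its Hilbert function is $H(i)=\#\{(i_1,i_2): i_1+i_2=i,\ 0\le i_1<a,\ 0\le i_2<b\}$. For $a\le b$ this is $i+1$ for $0\le i\le a-1$, stays equal to $a$ up to degree $b-1$, and then decreases symmetrically to $H(a+b-2)=1$.

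Every finite length graded $\kk[T]$-module decomposes as $\bigoplus \delta_{c}(-j)$. The number of summands with generator in degree $j$ equals $\dim_{\kk}(M/TM)_j$. So the plan is to show two things: $(M/TM)_j$ is $1$-dimensional for $0\le j\le a-1$ and zero for $j\ge a$, and the lengths $c_j$ decrease.

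\textbf{Step 1: generators.} Change variables to $T,T_2$, so $M=\kk[T,T_2]/\langle (T-T_2)^a,T_2^b\rangle$. Then
\[M/TM=\kk[T_2]/\langle T_2^a,T_2^b\rangle=\kk[T_2]/\langle T_2^a\rangle,\]
since $a\le b$. This has dimension one in each degree $0,\dots,a-1$ and vanishes in all other degrees. Hence $M\cong\bigoplus_{j=0}^{a-1}\delta_{c_j}(-j)$ with each $c_j\ge 1$, which is \eqref{eq:dela-delb-general}.

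\textbf{Step 2: monotonicity.} Let $k_i$ denote the rank of $T:M_i\to M_{i+1}$. A summand $\delta_c(-j)$ contributes one to $\dim(M/TM)_{i+1}$ exactly when $i+1=j$, and it contributes to $\dim\ker(T|_{M_i})$ exactly when $i=j+c-1$. Counting generators, the degree-$(i+1)$ piece satisfies
\[\#\{j : j\le i+1\}-\#\{j : j+c_j\le i+1\}=H(i+1).\]
Consequently the number of summands "alive" in degree $m$ is $N(m)=\#\{j\le m,\ j+c_j>m\}=H(m)$.

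The key claim is that a summand born later cannot die later than one born earlier, i.e.\ $j+c_j$ is nonincreasing... but that is false in general. Instead we need $c_j\ge c_{j+1}$, which is the statement that ends of strings are "nested": $j+c_j\ge (j+1)+c_{j+1}-1$.

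A cleaner route to this is via Lefschetz-type injectivity of powers of $T$. The condition $c_j\ge c_{j+1}$ for all $j$ is equivalent to the following for each $k$:
\[\#\{j: c_j\ge k\}=\operatorname{rank}(T^{k-1}:M\to M)-\operatorname{rank}(T^{k}:M\to M),\]
together with the requirement that these strings start in the lowest degrees. Equivalently, it suffices to show that the generators of the image $T^{k}M$ as a $\kk[T]$-module sit in degrees $k,k+1,\dots,k+m-1$, forming an initial segment shifted by $k$.

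Compute
\[T^kM/T^{k+1}M\cong \left(M/TM\right)\big/\left(0:_{M/TM}\,\text{induced}\right),\]
or more concretely,
\[M/(T^{k+1})=\kk[T,T_2]/\langle T^{k+1},(T-T_2)^a,T_2^b\rangle.\]
The summands of length $\ge k+1$ are those generators $T_2^j$ with $T^kT_2^j\ne 0$ in $M$. Since $T^kT_2^j$ is the image of $T^k\cdot(T^kT_2^{j}\cdot$ stuff$)$, if $T^kT_2^{j+1}\ne0$ then $T^kT_2^j\ne 0$. Thus $\{j : c_j\ge k+1\}$ is an initial segment $\{0,\dots,m_k-1\}$, which gives $c_0\ge c_1\ge\cdots\ge c_{a-1}$.

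The main obstacle lies in this last step. Its justification requires care: $c_j\ge k+1$ must be characterized by $T^k$ not killing the generator modulo the span of the other generators, rather than just by $T^kT_2^j\ne0$. I would handle this by choosing homogeneous generators adapted to the socle filtration, i.e.\ a Jordan basis refined degree by degree. Within that basis, the monomial images $T_2^j$ can be corrected by lower-degree multiples of $T$ times earlier generators. Then $T_2^{j+1}=T_2\cdot T_2^j$ lets one transport "survival of $T^k$" from index $j+1$ to index $j$, using that $T_2$ commutes with $T$.
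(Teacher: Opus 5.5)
Your Step 1 is the paper's argument, with $T_2$ in place of $T_1$: the generators $f_j$ of a graded cyclic decomposition $M=\bigoplus_j\kk[T]f_j$ give a basis of $M/TM\simeq\kk[T_2]/(T_2^a)$, one in each degree $0,\dots,a-1$. The idea you reach in the last paragraph of Step 2 is also the paper's idea: normalize $f_j\equiv T_2^j$ modulo $TM$ and use that $T_2$ commutes with $T$. But as written, Step 2 does not prove $c_j\ge c_{j+1}$.

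The criterion you state, $c_j\ge k+1$ iff $T^kT_2^j\neq 0$, is false. For $a=b=2$ and $p$ odd, $\delta_2\delta_2=\delta_3+\delta_1(-1)$, so $c_1=1$, yet $T\cdot T_2=T_1T_2\neq 0$. The actual degree-one generator is $T_2-T_1$, which $T$ kills. The sentence deducing ``$T^kT_2^{j+1}\neq0\Rightarrow T^kT_2^j\neq0$'' is therefore garbled and beside the point. Your proposed repair (``modulo the span of the other generators'') never invokes the fact that actually closes the argument, namely the directness of the decomposition. The Hilbert-function count earlier in Step 2 cannot help either: it only shows that the numbers $j+c_j$ are a permutation of $\{b,\dots,a+b-1\}$.

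Either of the following completes your sketch.

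(i) The correct criterion is $c_j\ge k+1\iff T^kT_2^j\notin T^{k+1}M$. Indeed, $T^kf_j\equiv T^kT_2^j$ modulo $T^{k+1}M=\bigoplus_s T^{k+1}\kk[T]f_s$. Since the degree-$(j+k)$ component of $T^{k+1}\kk[T]f_j$ is zero, directness of the sum shows that $T^kf_j\in T^{k+1}M$ only if $T^kf_j=0$. Because $T^{k+1}M$ is stable under multiplication by $T_2$, $T^kT_2^j\in T^{k+1}M$ implies $T^kT_2^{j+1}\in T^{k+1}M$. That is, $c_j\le k\Rightarrow c_{j+1}\le k$; now take $k=c_j$.

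(ii) The paper's version: $TM$ is stable under $T_2$, so $f_{j+1}-T_2f_j\in TM$, and for degree reasons this difference equals $\sum_{s\le j}\alpha_sT^{j+1-s}f_s$. Apply $T^{c_j}$ and use $T^{c_j}T_2f_j=T_2T^{c_j}f_j=0$. This puts $T^{c_j}f_{j+1}$ in $\bigoplus_{s\le j}\kk[T]f_s\cap\kk[T]f_{j+1}=0$, so $c_{j+1}\le c_j$.

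No choice ``adapted to the socle filtration'' is needed; any graded cyclic decomposition works.
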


\begin{proof}
    We write $M=\kk[T_1,T_2]/(T_1^a,T_2^b)$, $T=T_1+T_2$, and view $M$ as a graded $\kk[T]$-module. It follows for instance by applying \cite{puy-vg}*{Theorem~1.3} to a graded presentation matrix of $M$ that we have a decomposition
    \begin{equation}\label{eq:dirsum-M} 
    M = \bigoplus_{j=0}^r \kk[T]\cdot f_j,
    \end{equation}
    where $f_j$ is homogeneous of degree $d_j$, $d_0\leq d_1\leq\cdots\leq d_r$. The above decomposition implies that the elements $f_j$ form a minimal set of generators of $M$ as a $\kk[T]$-module, hence by Nakayama's lemma they give a basis for the graded vector space $M/TM$. Using
    \[ \begin{aligned}        
    M/TM &= \kk[T_1,T_2]/(T_1^a,T_2^b,T_1+T_2)\simeq \kk[T_1]/(T_1^a,(-T_1)^b)\simeq\kk[T_1]/(T_1^a)\simeq \kk\oplus\kk\cdot T_1\oplus\cdots\oplus\kk\cdot T_1^{a-1} \\   &\simeq\kk\oplus\kk(-1)\oplus\cdots\oplus\kk(-a+1),
    \end{aligned}
    \]
    it follows that $r=a-1$, and $d_j=j$ for $j=0,\cdots,a-1$. Since $T$ acts nilpotently on $M$ we have
    \[\kk[T]\cdot f_j \simeq \kk[T]/(T^{c_j})(-j)\text{ for some }c_j\geq 1,\]
    which implies \eqref{eq:dela-delb-general}. Up to rescaling we may assume that $f_j-T_1^j\in T\cdot M$. It follows that $f_{j+1}-T_1\cdot f_j\in T\cdot M$ and therefore using \eqref{eq:dirsum-M} we can write
    \[f_{j+1}-T_1\cdot f_j = \sum_{s=0}^j \a_s\cdot T^{j+1-s}\cdot f_s,\quad\text{ for some }\a_s\in \kk.\]
    Multiplying the above equality by $T^{c_j}$ and using that $T^{c_j}\cdot f_j=0$, we conclude that
    \[T^{c_j}\cdot f_{j+1} \in \bigoplus_{s=0}^j \kk[T]\cdot f_s. \]
    The decomposition \eqref{eq:dirsum-M} implies then $T^{c_j}\cdot f_{j+1}=0$, hence $c_j\geq c_{j+1}$, concluding our proof.
\end{proof}

We write $\Delta^{u}$ for the \defi{ungraded GHM ring}, and note that Proposition~\ref{prop:basic-mult-GHM} implies that the ungraded multiplication determines the degree shifts in \eqref{eq:dela-delb-general}. If $\op{char}(\kk)=p$ and $a,b\leq q=p^r$, then $c_j\leq q$ for all $j$ (because $T^q=T_1^q+T_2^q$), hence $\delta_1,\cdots,\delta_{q}$ determine a subring $\Delta_q^u$ of $\Delta^u$, isomorphic to the representation ring of a cyclic group of order~$q$ \cite{green}*{Section~2.2}. It follows from \cite{han-monsky}*{Theorem~3.6} or \cite{GPX}*{Theorem~2} that the following subgroups of $\Delta^u$ and $\Delta$ are in fact ideals:
\[ I_p^u = \bigoplus_{d\in\bb{Z}_{\geq 1}} \bb{Z}\cdot\delta_{pd} \subset \Delta^u,\quad I_p = \bigoplus_{j\in\bb{Z},d\in\bb{Z}_{\geq 1}} \bb{Z}\cdot\delta_{pd}(-j)\subset \Delta.\]
We define $\ol{\Delta}^u=\Delta^u/I_p^u$, and $\ol{\Delta}=\Delta/I_p$. With the notation \eqref{eq:dela-delb-general}, we obtain that
\begin{equation}\label{eq:del-prod-div-by-p}
    \text{if $p|ab$ for some $1\leq a\leq b$ then $p|c_j(a,b)$ for all $j=0,\cdots,a-1$.}
\end{equation}
Following \cite{GPX}*{Notation~3}, we also write the multiplication in $\Delta^u$ as
\begin{equation}\label{eq:mult-ungraded-product}
    \delta_a\delta_b = \sum_{i=1}^{\ell} m_i\delta_{\mu_i},\text{ where }\mu_1>\cdots>\mu_{\ell}\geq 1.
\end{equation}
Writing $\mu^m = (\mu,\mu,\cdots,\mu)$, with $\mu$ repeated $m$ times, the relation between \eqref{eq:dela-delb-general} and \eqref{eq:mult-ungraded-product} takes the form
\begin{equation}\label{eq:rel-graded-ungraded}
    a = m_1+\cdots+m_{\ell},\quad\text{and}\quad (c_0,\cdots,c_{a-1}) = (\mu_1^{m_1},\mu_2^{m_2},\cdots,\mu_{\ell}^{m_{\ell}}).
\end{equation}

\begin{lemma}\label{lem:dela-delb-ineq}
    If $p\nmid c$ and $\delta_c(-j)$ is a summand of $\delta_a\delta_b$ in $\Delta$ then 
    \begin{equation}\label{eq:c+2j<a+b} 
     c+2j = a+b-1.
    \end{equation}
\end{lemma}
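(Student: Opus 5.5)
The plan is to exploit a self-duality of $M=\kk[T_1,T_2]/(T_1^a,T_2^b)$ as a graded $\kk[T]$-module. $M$ is a graded Gorenstein Artinian algebra with socle $T_1^{a-1}T_2^{b-1}$ in degree $s=a+b-2$. The multiplication pairing $M_i\times M_{s-i}\to M_s\simeq\kk$ is perfect and $T$-invariant, since $\langle Tx,y\rangle=\langle x,Ty\rangle$. Hence $M\simeq \op{Hom}_{\kk}(M,\kk)(-s)$ as graded $\kk[T]$-modules.

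Since $\delta_c(-j)$ has generator in degree $j$ and socle in degree $j+c-1$, its graded dual shifted by $s$ is $\delta_c(-(s-j-c+1))$. Hence the multiset of pairs $(c_j,j)$ in \eqref{eq:dela-delb-general} is invariant under $(c,j)\mapsto(c,\,a+b-1-c-j)$. In particular, $c+2j=a+b-1$ holds exactly when the summand is fixed by this involution, i.e. is "centered" in degrees.

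Self-duality alone does not force this, so the second step must use $p\nmid c$. I would argue by induction on $a+b$ (or on $a$), computing $\delta_a\delta_b$ with a recursion: multiply by $\delta_2$ and use the Clebsch--Gordan-type relations valid away from multiples of $p$. Alternatively, and preferably, I would use the $\mathfrak{sl}_2$ / Lefschetz picture. Consider $L=T$ and the operator $T^{c}$ on the summands of length $c$. The number of summands $\delta_c(-j)$ is detected by ranks of $T^k:M_i\to M_{i+k}$.

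The cleanest route is via the reduced ring. In $\ol{\Delta}$, modulo $I_p$, the class of $\delta_a\delta_b$ is computed by the formulas of \cite{GPX} or \cite{han-monsky}, and it is known to match the tensor product of tilting $\op{SL}_2$-modules modulo negligible ones, i.e. Verlinde fusion. There, summands with $p\nmid c$ come from $\mathfrak{sl}_2$-like strings. So the plan is the following.

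(i) Reduce to $a\le b$ and show that the summands of length prime to $p$ occur with multiplicity as in the known ungraded formula.

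(ii) Use Proposition~\ref{prop:basic-mult-GHM}, which says the summands have shifts $j=0,\dots,a-1$ with $c_j$ nonincreasing. Combined with \eqref{eq:rel-graded-ungraded} and the duality involution, each block of equal length $\mu_i$ occupies consecutive shifts $j$, and the involution maps this block to the block of shifts $a+b-1-\mu_i-j$. Since the $c_j$ are nonincreasing and the involution preserves lengths, it must map each length-block to itself. The block of length $\mu$ occupies consecutive $j\in[J,J+m-1]$, and the involution sends it to $[a+b-\mu-J-m,\,a+b-1-\mu-J]$. Equating gives $2J+m-1=a+b-1-\mu$.

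(iii) Show that a block with $p\nmid\mu$ has multiplicity $m=1$. Then $c+2j=a+b-1$ follows from (ii).

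Step (iii) is the main obstacle. I would try to derive it from the known ungraded product in \cite{GPX}*{Notation~3}: for a product of Jordan blocks, the lengths prime to $p$ appear with multiplicity one. Failing that, I would argue by induction using the ideal property of $I_p$ together with the reduction $\delta_a=\delta_{a'}$-type relations in $\ol{\Delta}^u$ (Verlinde fusion rules are multiplicity free for $\op{SU}(2)$), lifting multiplicity one from $\ol{\Delta}^u$ to $\Delta^u$.
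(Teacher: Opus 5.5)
Your proof is correct, but for the key step it takes a different route from the paper.

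The paper's argument combines two results of \cite{GPX}:
\begin{itemize}
\item Theorem~4: a length $\mu_i$ prime to $p$ has multiplicity $m_i=1$;
\item Theorem~5: for such $\mu_i$, one has $\mu_i=a+b-1-2(m_1+\cdots+m_{i-1})$.
\end{itemize}
It then reads off $j=m_1+\cdots+m_{i-1}$ from Proposition~\ref{prop:basic-mult-GHM} and \eqref{eq:rel-graded-ungraded}.

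You replace Theorem~5 by the Gorenstein self-duality $M\simeq\op{Hom}_{\kk}(M,\kk)(-(a+b-2))$. Together with Krull--Schmidt, this gives the involution $(c,j)\mapsto(c,a+b-1-c-j)$ on graded summands, and your computation of it is right. Once $\delta_c$ occurs only once, its unique graded copy must be fixed by the involution, which is exactly $c+2j=a+b-1$. For this last deduction you need neither Proposition~\ref{prop:basic-mult-GHM} nor the block analysis in (ii). The block analysis does, however, give the extra fact $2J+m-1=a+b-1-\mu$ for every block, including when $p\mid\mu$.

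The multiplicity-one input in your step (iii) is the same as the paper's. The precise reference is \cite{GPX}*{Theorem~4}; Notation~3 there only fixes notation. Your step (i) and the exploratory Clebsch--Gordan/Lefschetz remarks are not needed and can be dropped.

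Your fallback via the reduced ring also works, but not through tilting $\op{SL}_2$-modules modulo negligible ones. Their fusion ring is only $\mc{V}_{p-2}\simeq\ol{\Delta}^u_p$, so that route covers only $a,b<p$. In general you need Theorem~\ref{thm:GHM-as-tensor-Verlinde}:
\begin{itemize}
\item under it, each $\delta_c$ with $p\nmid c$ is a pure tensor of basis elements;
\item both $\mc{V}_{p-2}$ and $\mc{V}'_{p-1}$ have multiplicity-free products, so $\delta_a\delta_b$ is multiplicity-free in $\ol{\Delta}^u$;
\item this lifts to $\Delta^u$ because $I^u_p$ is spanned by the $\delta_{pd}$.
\end{itemize}
There is no circularity, since that theorem rests only on Renaud's results.

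The trade-off is this. The paper's proof is a two-line citation. Yours explains conceptually why summands of length prime to $p$ are centered in degree, and borrows only the multiplicity statement from \cite{GPX}.
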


\begin{proof} It follows from \cite{GPX}*{Theorem~4} that if $\delta_c=\delta_{\mu_i}$ is a summand of $\delta_a\delta_b$ in $\Delta^u$, then $m_i=1$. If $\delta_c(-j)$ is a summand of $\delta_a\delta_b$ in $\Delta$, then it follows from \eqref{eq:dela-delb-general} that $c=c_j$, and from \eqref{eq:mult-ungraded-product} that $j=m_1+\cdots+m_{i-1}$. Using \cite{GPX}*{Theorem~5}, we obtain that
\[ c = \mu_i = a+b-1-2j,\]
which is equivalent to the desired conclusion \eqref{eq:c+2j<a+b}.
\end{proof}

An easy induction now yields the following important generalization.

\begin{proposition}\label{prop:delcj-in-product}
    If $p\nmid c$ and $\delta_c(-j)$ is a summand of $\delta_{a_1}\cdots\delta_{a_n}$ in $\Delta$ then
    \[ c+2j = a_1+\cdots+a_n - (n-1).\]
\end{proposition}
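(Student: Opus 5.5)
We argue by induction on $n$, with $n=1$ trivial ($\delta_{a_1}$ itself, $j=0$) and $n=2$ given by Lemma~\ref{lem:dela-delb-ineq}. For the inductive step, write the product of the first $n-1$ factors in $\Delta$ as a sum of indecomposables,
\[ \delta_{a_1}\cdots\delta_{a_{n-1}} = \sum_{k} \delta_{b_k}(-j_k), \]
so that
\[ \delta_{a_1}\cdots\delta_{a_n} = \sum_k \bigl(\delta_{b_k}\delta_{a_n}\bigr)(-j_k). \]
This uses that multiplication in $\Delta$ is compatible with shifts, since $M(-j)\oo N = (M\oo N)(-j)$. By Krull--Schmidt for finite length graded $\kk[T]$-modules, any indecomposable summand $\delta_c(-j)$ of the full product is a summand of some $\bigl(\delta_{b_k}\delta_{a_n}\bigr)(-j_k)$. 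Equivalently, $\delta_c(-j+j_k)$ is a summand of $\delta_{b_k}\delta_{a_n}$.

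The key point is to show that $p\nmid b_k$. If $p\mid b_k$, then \eqref{eq:del-prod-div-by-p} says that every summand of $\delta_{b_k}\delta_{a_n}$ has length divisible by $p$. This uses the ideal property of $I_p$ together with Proposition~\ref{prop:basic-mult-GHM}, and the order of the two factors does not matter since the product is commutative. That would contradict $p\nmid c$, so indeed $p\nmid b_k$.

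Since $p\nmid b_k$, the summand $\delta_{b_k}(-j_k)$ of the $(n-1)$-fold product falls under the inductive hypothesis:
\[ b_k + 2j_k = a_1+\cdots+a_{n-1}-(n-2). \]
Since $p\nmid c$, Lemma~\ref{lem:dela-delb-ineq} applies to the summand $\delta_c(-(j-j_k))$ of $\delta_{b_k}\delta_{a_n}$:
\[ c+2(j-j_k) = b_k+a_n-1. \]
Adding the two identities gives
\[ c+2j = a_1+\cdots+a_n-(n-1), \]
which is the claim.

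The only delicate step is the divisibility argument excluding intermediate summands of length divisible by $p$. It rests on $I_p$ being an ideal, as recorded before \eqref{eq:del-prod-div-by-p}. Everything else is bookkeeping of shifts.
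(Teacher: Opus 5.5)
Your proof is correct and follows the paper's argument essentially step for step. Both use induction on $n$ with Lemma~\ref{lem:dela-delb-ineq} as the key input, use \eqref{eq:del-prod-div-by-p} to show the intermediate summand $\delta_b(-i)$ has $p\nmid b$, and then add the two shift identities. Your explicit appeals to Krull--Schmidt and to commutativity only make precise steps the paper leaves implicit.
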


\begin{proof}
    We prove the statement by induction on $n$, noting that $n=2$ is covered in Lemma~\ref{lem:dela-delb-ineq}. Suppose now that $n>2$ and that $\delta_c(-j)$ is a summand of $\delta_{a_1}\cdots\delta_{a_n}$. It follows that $\delta_{a_1}\cdots\delta_{a_{n-1}}$ contains a summand $\delta_b(-i)$ such that $\delta_c(-(j-i))$ is a summand of $\delta_b\cdot\delta_{a_n}$. By \eqref{eq:del-prod-div-by-p}, the hypothesis $p\nmid c$ implies that $p\nmid b$. We can therefore apply the induction hypothesis to conclude that
    \[ b+2i = a_1+\cdots+a_{n-1}-(n-2).\]
    Applying Lemma~\ref{lem:dela-delb-ineq} to the product $\delta_b\cdot\delta_{a_n}$, we conclude that
    \[ c+2(j-i) = b+a_n-1.\]
    Adding the two displayed equalities above and canceling $b$ concludes our proof.
\end{proof}

While Proposition~\ref{prop:delcj-in-product} uniquely identifies the degree shift of a summand $\delta_c$, it is not a priori clear for which values of $c$ with $p\nmid c$ there is a summand $\delta_c$ of $\delta_{a_1}\cdots\delta_{a_n}$. We show next that in characteristic $p=2$ this summand is uniquely determined. As in \eqref{eq:def-nim-sum}, we write $a\oplus b$ for the \defi{Nim-sum} of $a$ and $b$.

\begin{proposition}\label{prop:delab-odd-Nim}
    Suppose that $\op{char}(\kk)=2$. For $a,b,c\geq 0$, we have that $\delta_{2c+1}(-j)$ appears as a summand of $\delta_{2a+1}\delta_{2b+1}$ in $\Delta$ if and only if
    \[ c = a\oplus b \quad\text{ and }\quad j = a+b-c.\]
\end{proposition}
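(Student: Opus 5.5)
The shift is the easy part, so the plan is to reduce everything to the ungraded reduced ring $\ol{\Delta}^u=\Delta^u/I_2^u$. By Lemma~\ref{lem:dela-delb-ineq} applied with $a\mapsto 2a+1$, $b\mapsto 2b+1$ and the odd length $2c+1$, any summand $\delta_{2c+1}(-j)$ of $\delta_{2a+1}\delta_{2b+1}$ satisfies
\[2c+1+2j=2a+2b+1,\quad\text{that is,}\quad j=a+b-c.\]
It therefore remains to show two things. First, $\delta_{2a+1}\delta_{2b+1}$ has exactly one summand of odd length, counted with multiplicity. Second, that summand is $\delta_{2(a\oplus b)+1}$. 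Together these amount to the congruence
\[\delta_{2a+1}\delta_{2b+1}\equiv\delta_{2(a\oplus b)+1}\pmod{I_2^u}.\]

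I would prove this congruence by induction on $a+b$, using Frobenius restriction. Consider the restriction functor from $\kk[T]$-modules to $\kk[T^2]$-modules. It commutes with $\oo_{\kk}$, because in characteristic $2$ we have $T^2\cdot(m\oo n)=T^2m\oo n+m\oo T^2n$. Hence it induces a ring endomorphism $\op{Res}$ of $\Delta^u$, given on generators by
\[\op{Res}(\delta_{2c+1})=\delta_{c+1}+\delta_c,\qquad \op{Res}(\delta_{2c})=2\delta_c.\]
This map sends $I_2^u$ to $I_2^u$ modulo nothing useful, so instead I track it as follows. Exactly one of $c,c+1$ is odd, so modulo $I_2^u$ the image $\op{Res}(\delta_{2c+1})$ is a single odd-length class, namely $\delta_{2\lfloor c/2\rfloor+1}$. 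The image $\op{Res}(\delta_{2c})=2\delta_c$ is also a class in $\Delta^u$, possibly odd. Applying $\op{Res}$ to the product and using the induction hypothesis on $(\lfloor a/2\rfloor,\lfloor b/2\rfloor)$ gives
\[\op{Res}(\delta_{2a+1}\delta_{2b+1})\equiv\delta_{2\lfloor a/2\rfloor+1}\,\delta_{2\lfloor b/2\rfloor+1}\equiv\delta_{2(\lfloor a/2\rfloor\oplus\lfloor b/2\rfloor)+1}\pmod{I_2^u}.\]
On the other hand, write $\delta_{2a+1}\delta_{2b+1}=\sum_i\delta_{2c_i+1}+\sum_k\delta_{2m_k}$. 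I would first show that the even-length part restricts into $I_2^u$, i.e.\ that each $m_k$ is even. The natural tool is \eqref{eq:del-prod-div-by-p} combined with the description of $\Delta^u_q$ as the representation ring of a cyclic $2$-group. Granting this, the count of odd summands is preserved: there is exactly one $c_i$, and its high bits are $\lfloor c/2\rfloor=\lfloor a/2\rfloor\oplus\lfloor b/2\rfloor$.

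The main obstacle is the lowest bit of $c$, which $\op{Res}$ forgets, together with the claim about the $m_k$. For the low bit I would first try a parity count on the graded pieces of $M=\kk[T_1,T_2]/(T_1^{2a+1},T_2^{2b+1})$. The kernel $(0:_M T)$ has socle degrees $c_j+j-1$, which by Proposition~\ref{prop:basic-mult-GHM} run over $j=0,\dots,2\min(a,b)$. I would compare its dimension in even versus odd degrees with the same count computed directly from the bigraded structure of $M$. Alternatively, $\dim M$ modulo $4$ determines $c\bmod 2$, provided the even summands all have length divisible by $4$ or come in pairs. If neither works, I would fall back on the explicit criterion of \cite{GPX}*{Theorem~5} for which $\mu_i$ occur with multiplicity one, and check directly that it singles out $2(a\oplus b)+1$.
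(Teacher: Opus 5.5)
\textbf{Verdict: genuine gap, but fixable inside your own setup.} The reduction to $\delta_{2a+1}\delta_{2b+1}\equiv\delta_{2(a\oplus b)+1}\pmod{I_2^u}$ is correct. So is the Frobenius restriction: in characteristic $2$ it is a ring endomorphism of $\Delta^u$. Since $I_2^u$ is an ideal, your congruence $\op{Res}(\delta_{2a+1}\delta_{2b+1})\equiv\delta_{2(\lfloor a/2\rfloor\oplus\lfloor b/2\rfloor)+1}$ is also valid.

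As written, though, the argument is not finished. You assume (``granting this'') that every even summand $\delta_{2m_k}$ of $\delta_{2a+1}\delta_{2b+1}$ has $m_k$ even. That is exactly the point on which everything rests, because $\op{Res}(\delta_{2m})=2\delta_m$ does not lie in $I_2^u$ when $m$ is odd. The tool you point to does not supply it: \eqref{eq:del-prod-div-by-p} concerns products with a factor of length divisible by $p$, whereas here both factors have odd length.

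The low bit of $c$ is likewise left as a list of things to try. The last fallback is not an argument as it stands. In the form used in Lemma~\ref{lem:dela-delb-ineq}, \cite{GPX}*{Theorem~5} only relates an odd length to its shift. It does not tell you which odd length occurs.

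Both holes close with your own machinery. Expand the restriction directly:
\[
\op{Res}\Bigl(\sum_i\delta_{2c_i+1}+\sum_k\delta_{2m_k}\Bigr)\equiv\sum_i\delta_{2\lfloor c_i/2\rfloor+1}+2\sum_{m_k\text{ odd}}\delta_{m_k}\pmod{I_2^u}.
\]
The right side is a nonnegative combination of basis elements of the free abelian group $\ol{\Delta}^u$. It must equal the single basis element supplied by the induction hypothesis. Summing coefficients gives $\#\{i\}+2\cdot\#\{k : m_k\text{ odd}\}=1$. Hence:
\begin{itemize}
\item there is exactly one odd summand $\delta_{2c+1}$;
\item every even summand has length divisible by $4$;
\item $\lfloor c/2\rfloor=\lfloor a/2\rfloor\oplus\lfloor b/2\rfloor$.
\end{itemize}
Your mod-$4$ alternative now settles the low bit. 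We have $(2a+1)(2b+1)\equiv 2(a+b)+1\pmod 4$, and also $(2a+1)(2b+1)\equiv 2c+1\pmod 4$. So $c\equiv a+b\equiv a\oplus b\pmod 2$, and therefore $c=a\oplus b$.

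\textbf{Comparison with the paper.} Once completed this way, your proof takes a genuinely different route. The paper strips off the top binary digit and imports the Renaud--GPX recursion (\cite{renaud}*{Theorem~2}, \cite{GPX}*{Proposition~6}) for the product. You strip off the bottom digit via $T\mapsto T^2$ and recover it by a dimension count. Your version is self-contained apart from Lemma~\ref{lem:dela-delb-ineq}, and it yields as a by-product that all even summands have length divisible by $4$. The paper's version is shorter given the cited structure theorem, and it matches the top-digit recursion used later in Theorem~\ref{thm:GHM-as-tensor-Verlinde}.
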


\begin{proof} We know by Lemma~\ref{lem:dela-delb-ineq} that if $\delta_{2c+1}(-j)$ is a summand of $\delta_{2a+1}\delta_{2b+1}$ in $\Delta$ then $j=a+b-c$. Therefore it suffices to consider the ungraded multiplication, and prove that $c=a\oplus b$ gives the unique summand $\delta_{2c+1}$ of $\delta_{2a+1}\delta_{2b+1}$ in $\Delta^u$. We do so by induction on $a,b$, noting that the base case $a=b=0$ is trivially satisfied. 

We assume $a\leq b$ and for some $n\geq 1$, write 
\[a=2^{n-1}a_0+a_1,\quad b=2^{n-1}+b_1,\] 
where $a_0\in\{0,1\}$, and $0\leq a_1,b_1<2^{n-1}$. It follows by induction that $\delta_{2a_1+1}\delta_{2b_1+1}$ in $\Delta^u$ has a unique summand of the form $\delta_{2c_1+1}$, which occurs for $c_1=a_1\oplus b_1$. Moreover, \cite{GPX}*{Proposition~6}, \cite{renaud}*{Theorem~2}, imply that there is a unique odd summand of $\delta_{2a+1}\delta_{2b+1}$ in $\Delta^u$, namely $\delta_{2c+1}$ where $c = (1-a_0)2^{n-1}+c_1$. Since $1-a_0=a_0\oplus 1$ and $c_1=a_1\oplus b_1$, it follows that $c=a\oplus b$, as desired.
\end{proof}

Note that in light of \eqref{eq:dela-delb-general}, the multiplicity of $\delta_{2c+1}(-j)$ in $\delta_{2a+1}\delta_{2b+1}$ is exactly one if it is non-zero. By induction, this yields the following important consequence.

\begin{corollary}\label{cor:prod-dela-odd-Nim}
    Suppose that $\op{char}(\kk)=2$. If $a_1,\cdots,a_n,c\geq 0$, we have that $\delta_{2c+1}(-j)$ appears as a summand of $\delta_{2a_1+1}\delta_{2a_2+1}\cdots\delta_{2a_n+1}$ if and only if
    \[ c = a_1\oplus a_2\oplus\cdots\oplus a_n \quad\text{ and }\quad j = a_1+\cdots+a_n-c.\]
    Moreover, if $\delta_{2c+1}(-j)$ appears as a summand, then its multiplicity is equal to one.
\end{corollary}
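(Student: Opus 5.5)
We argue by induction on $n$. The case $n=1$ is trivial: $\delta_{2a_1+1}$ is itself the only summand, with $c=a_1$ and $j=0$. The case $n=2$ is Proposition~\ref{prop:delab-odd-Nim}, together with the remark that follows it: by \eqref{eq:dela-delb-general}, each shift $j$ occurs exactly once in $\delta_a\delta_b$, so a summand $\delta_{2c+1}(-j)$ has multiplicity one.

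For the inductive step, write $P=\delta_{2a_1+1}\cdots\delta_{2a_{n-1}+1}$. By induction, the odd-length summands of $P$ consist of the single term $\delta_{2c'+1}(-j')$, with multiplicity one, where
\[ c'=a_1\oplus\cdots\oplus a_{n-1}, \qquad j'=a_1+\cdots+a_{n-1}-c'. \]
All other summands of $P$ have the form $\delta_{2m}(-i)$. The key input is \eqref{eq:del-prod-div-by-p} with $p=2$: the class $I_2$ is an ideal, so each product $\delta_{2m}(-i)\cdot\delta_{2a_n+1}$ is a sum of even-length summands. (To apply \eqref{eq:del-prod-div-by-p} we may need to reorder the factors so that $a\le b$; this is harmless since multiplication is commutative.) Consequently, the odd-length summands of the full product $P\cdot\delta_{2a_n+1}$ are exactly the odd-length summands of
\[ \delta_{2c'+1}(-j')\cdot\delta_{2a_n+1}=\bigl(\delta_{2c'+1}\delta_{2a_n+1}\bigr)(-j'). \]
Here we use that grading shifts commute with multiplication in $\Delta$, since $\delta_a(-i)\cdot\delta_b(-k)=(\delta_a\delta_b)(-i-k)$.

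Applying the case $n=2$ to this last product, there is exactly one odd summand, namely $\delta_{2c+1}(-(j'+a_n+c'-c))$ with $c=c'\oplus a_n$, and it occurs with multiplicity one. Since the Nim-sum is associative, $c=a_1\oplus\cdots\oplus a_n$. Moreover,
\[ j'+a_n+c'-c=a_1+\cdots+a_n-c, \]
which is the claimed shift. Both directions of the ``if and only if'' follow, because the odd summands of the product have been identified completely and with their multiplicities.

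The only point needing care is that the decomposition into indecomposables is unique (Krull--Schmidt for finite length graded $\kk[T]$-modules). This uniqueness is what lets us read off summands from products of classes in $\Delta$ and justifies tracking multiplicities through the ring structure. Beyond that, the step I expect to be the main obstacle is ensuring that the even summands of $P$ contribute no odd pieces, and that is exactly what the ideal property \eqref{eq:del-prod-div-by-p} provides.
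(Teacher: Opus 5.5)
Your proposal is correct and follows the same route as the paper, which derives the corollary by induction on $n$ from Proposition~\ref{prop:delab-odd-Nim} and the multiplicity-one observation coming from \eqref{eq:dela-delb-general}. You fill in the steps the paper leaves implicit: the ideal property \eqref{eq:del-prod-div-by-p} shows that even-length summands of the partial product contribute no odd summands (the same device the paper uses in proving Proposition~\ref{prop:delcj-in-product}), and you track how the degree shifts add.
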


\section{Some homological algebra over $\kk[T]$}
\label{sec:hom-kT}

Our interest in the GHM ring arises in relation to some basic homological algebra calculations over $\kk[T]$, which foreshadow our later study of line bundle cohomology on infinitesimal thickenings of the incidence correspondence. We consider for each $r\geq 1$ the short exact sequence
\begin{equation}\label{eq:ses-kT-mod-Tr} 0 \lra \frac{(T^{r-1})}{(T^r)}\lra \frac{\kk[T]}{(T^r)} \lra \frac{\kk[T]}{(T^{r-1})} \lra 0
\end{equation}
and note that we have an isomorphism $\frac{(T^{r-1})}{(T^r)}\simeq\frac{\kk[T]}{T}(-r+1)=\kk(-r+1)$ of graded $\kk[T]$-modules. Suppose now that $M$ is a graded $\kk[T]$-module, and tensor the exact sequence \eqref{eq:ses-kT-mod-Tr} with $M$ to get a long exact sequence
\[
\begin{aligned}
    0 &\lra \Tor_1^{\kk[T]}(M,\kk(-r+1)) \lra \Tor_1^{\kk[T]}\left(M,\frac{\kk[T]}{(T^r)}\right) \lra \Tor_1^{\kk[T]}\left(M,\frac{\kk[T]}{(T^{r-1})}\right) \overset{\partial_M}{\lra} \\
    & \lra \frac{M}{TM}(-r+1) \lra \frac{M}{T^rM} \lra \frac{M}{T^{r-1}M} \lra 0
\end{aligned}
\]
If we write
\[ (0:_M f) = \{m\in M : fm=0\} \quad\text{for }f\in\kk[T],\]
then there exists an identification
\[\Tor_1^{\kk[T]}\left(M,\frac{\kk[T]}{(T^r)}\right) = (0:_M T^r)(-r),\]
hence the above long exact sequence becomes
\begin{equation}\label{eq:les-Tor-M-kT}
0 \to (0:_M T)(-r) \to (0:_M T^r)(-r) \overset{\cdot T}{\to} (0:_M T^{r-1})(-r+1) \overset{\partial_M^r}{\to} \frac{M}{TM}(-r+1) \overset{T^{r-1}}{\to} \frac{M}{T^rM} \to \frac{M}{T^{r-1}M} \to 0
\end{equation}
We are interested in the rank of the connecting homomorphism $\partial_M^r$ in each degree, in the case where $M$ is a finite length graded module. We will abuse notation and write 
\[ M = \sum_{c,j} m_{c,j}\delta_c(-j)\quad\text{if}\quad M = \bigoplus_{c,j} \left(\frac{\kk[T]}{(T^c)}(-j)\right)^{\oplus m_{c,j}}.\]

\begin{lemma}\label{lem:rank-partialM}
If $M = \sum_{c,j} m_{c,j}\delta_c(-j)$ then 
\[ \rk(\partial_{M,t}^r) = \sum_{c=1}^{r-1} m_{c,t-r+1}.\]
\end{lemma}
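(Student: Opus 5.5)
Since the long exact sequence \eqref{eq:les-Tor-M-kT} is natural in $M$ and additive under direct sums, and since the connecting map $\partial_M^r$ preserves degrees, I will first reduce to the case of a single indecomposable summand $M=\delta_c(-j)$. For this module I will show that $\rk(\partial^r_{M,t})$ equals $1$ when $c\le r-1$ and $t=j+r-1$, and equals $0$ otherwise. Summing over the summands of $M$ with multiplicities $m_{c,j}$, the only contributions in degree $t$ then come from $j=t-r+1$ and $c\in\{1,\dots,r-1\}$, which gives the stated formula.

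To handle $M=\kk[T]/(T^c)(-j)$, I will avoid analysing $\partial$ directly and use exactness instead. By exactness of \eqref{eq:les-Tor-M-kT}, the rank of $\partial^r_{M,t}$ equals
\[\dim\Big(\tfrac{M}{TM}(-r+1)\Big)_t-\rk\big(T^{r-1}:\tfrac{M}{TM}(-r+1)\to \tfrac{M}{T^rM}\big)_t.\]
Since $M/TM\simeq\kk(-j)$, the source $\tfrac{M}{TM}(-r+1)$ is one-dimensional and concentrated in degree $t=j+r-1$. So $\partial^r_{M,t}=0$ for all other $t$, and in degree $t=j+r-1$ it remains to decide whether the map induced by $T^{r-1}$ is zero or injective. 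This map sends the class of the generator $g$ of degree $j$ to the class of $T^{r-1}g$ in $M/T^rM=\kk[T]/(T^{\min(c,r)})(-j)$.

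For this last check, suppose first that $c\ge r$. Then $M/T^rM=\kk[T]/(T^r)(-j)$, in which $T^{r-1}g\neq 0$, so the map is injective and $\partial=0$. Suppose instead that $c\le r-1$. Then $T^{r-1}g=0$ already in $M$, so the map is zero and $\partial$ has rank $1$. This matches the claim in the first paragraph.

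No step here is a real obstacle. The only points needing care are the degree bookkeeping (the twist $(-r+1)$ on $M/TM$ together with the generator in degree $j$) and the fact that the Tor identifications in \eqref{eq:les-Tor-M-kT} are compatible with direct sums. The latter holds because everything is a functorial long exact sequence.
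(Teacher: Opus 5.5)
Your proposal is correct and follows essentially the same route as the paper's proof. Both reduce to a single summand $\delta_c(-j)$ and use exactness to identify $\rk(\partial^r_{M,t})$ with the dimension of the kernel of $T^{r-1}$ on the one-dimensional space $(M/TM)_{t-r+1}$. That kernel is nonzero exactly when $j=t-r+1$ and $c\leq r-1$.
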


\begin{proof}
    Since the exact sequence \eqref{eq:les-Tor-M-kT} is compatible with direct sum decompositions, it suffices to analyze the case when $M=\delta_c(-j)$. Note that by the exactness of \eqref{eq:les-Tor-M-kT}, we have
    \[ \rk(\partial_{M,t}^r) = \dim_{\kk}\ker\left(\left(\frac{M}{TM}\right)_{t-r+1} \overset{T^{r-1}}{\to} \left(\frac{M}{T^rM}\right)_t \right)\]
    Since $M/TM=\kk(-j)$, we have that $(M/TM)_{t-r+1}\neq 0$ if and only if $j=t-r+1$, in which case $\dim_{\kk}(M/TM)_{t-r+1}=1$. Moreover, the multiplication by $T^{r-1}$ sends $(M/TM)_{t-r+1}$ to zero if and only if $c\leq r-1$. It follows that $\rk(\partial^r_{M,t}) = 1$ when $j=t-r+1$ and $c\leq r-1$, and $\rk(\partial^r_{M,t}) = 0$ otherwise, concluding our proof.
\end{proof}

We now apply these observations to the monomial complete intersection $M_{\ul{a}}$ in \eqref{eq:def-Ma}, which is viewed as a $\kk[T]$-module for $T=T_1+\cdots+T_n$. We consider the Hilbert series of $M_{\ul{a}}/TM_{\ul{a}}$,
\begin{equation}\label{eq:Pat}
    P_{\ul{a}}(t) = \sum_{j\geq 0} \dim_{\kk}\left(\frac{M_{\ul{a}}}{TM_{\ul{a}}} \right)_j \cdot t^j.
\end{equation}
We write $M_{\ul{a}} = \delta_{1+a_1}\cdots\delta_{1+a_n}= \sum_{c,j} m_{c,j}(\ul{a}) \delta_c(-j)$, and for each $c$ we consider the polynomial
\begin{equation}\label{eq:def-fcat}
f_c^{\ul{a}}(t) = \sum_{j\geq 0}m_{c,j}(\ul{a}) \cdot t^j
\end{equation}
that keeps track of the indecomposable $\kk[T]$-summands of $M_{\ul{a}}$ of length $c$. While $f^{\ul{a}}_c(t)$ can be complicated in general, we will mainly need the range of values $c=1,\cdots,p-1$. In this case, since $c$ not divisible by $p$, 
Proposition~\ref{prop:delcj-in-product} implies that whenever $f_c^{\ul{a}}(t)$ is non-zero, it consists of a single term:
\begin{equation}\label{eq:fcat-small-c}
f_c^{\ul{a}}(t) = m_{c,j}(\ul{a}) \cdot t^j \quad\text{where}\quad j=\frac{a_1+\cdots+a_n+1-c}{2}.
\end{equation}

\begin{theorem}\label{thm:Pa-recursion}
    With the convention $P_{\ul{b}}(t)=0$ if $b_s<0$ for some $1\leq s\leq n$, we have 
    \[P_{\ul{a}}(t)\cdot (1+t+\cdots+t^{p-1}) = \sum_{0\leq i_1,\cdots,i_n\leq p-1} P_{\ul{a}'}(t^p)\cdot t^{i_1+\cdots+i_n} + \sum_{c=1}^{p-1}f^{\ul{a}}_c(t) \cdot (t^c+t^{c+1}+\cdots+t^{p-1}),\]
    where in the first sum $\ul{a}'=\ul{a}'(\ul{i})$ is defined by
    \begin{equation}\label{eq:def-aprs} 
    a'_s = \left\lfloor\frac{a_s-i_s}{p}\right\rfloor\quad\text{for }s=1,\cdots,n.
    \end{equation}
\end{theorem}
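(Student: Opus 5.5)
The plan is to compute the Hilbert series of $M_{\ul{a}}/T^pM_{\ul{a}}$ in two different ways and compare the results. The first computation uses the long exact sequences \eqref{eq:les-Tor-M-kT} for $r=1,\dots,p$ together with Lemma~\ref{lem:rank-partialM}. The second uses the Frobenius identity $T^p=T_1^p+\cdots+T_n^p$, which holds in characteristic $p$.

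\emph{Step 1: the telescoping computation.} Write $M=M_{\ul{a}}$ and let $H_r(t)$ denote the Hilbert series of $M/T^rM$, so that $H_0=0$ and $H_1=P_{\ul{a}}$. Consider the right end of \eqref{eq:les-Tor-M-kT}, namely
\[\partial^r_M \to \tfrac{M}{TM}(-r+1)\overset{T^{r-1}}{\to}\tfrac{M}{T^rM}\to\tfrac{M}{T^{r-1}M}\to 0.\]
Exactness of this piece gives, in each degree $t$,
\[\dim(M/T^rM)_t=\dim(M/T^{r-1}M)_t+\dim(M/TM)_{t-r+1}-\rk(\partial^r_{M,t}).\]
Lemma~\ref{lem:rank-partialM} says $\rk(\partial^r_{M,t})=\sum_{c=1}^{r-1}m_{c,t-r+1}(\ul{a})$. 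Multiplying by $t$ to the power of the degree and summing over degrees, this becomes
\[H_r(t)=H_{r-1}(t)+t^{r-1}P_{\ul{a}}(t)-\sum_{c=1}^{r-1}t^{r-1}f^{\ul{a}}_c(t).\]
For $r=1$ the correction sum is empty, consistent with $H_1=P_{\ul{a}}$. Now sum over $r=1,\dots,p$ and exchange the order of summation in the correction terms: for fixed $c$, the index $r$ runs from $c+1$ to $p$, so $t^{r-1}$ runs from $t^c$ to $t^{p-1}$. This yields
\[P_{\ul{a}}(t)(1+t+\cdots+t^{p-1})=H_p(t)+\sum_{c=1}^{p-1}f^{\ul{a}}_c(t)\,(t^c+\cdots+t^{p-1}).\]
It then remains to identify $H_p(t)$ with the first sum in the statement.

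\emph{Step 2: decomposing over the Frobenius subring.} Set $U_s=T_s^p$ and $A=\kk[U_1,\dots,U_n]$, graded so that $\deg U_s=p$. Every monomial of $M=\kk[T_1,\dots,T_n]/\langle T_s^{1+a_s}\rangle$ can be written uniquely as $T_1^{i_1}\cdots T_n^{i_n}\cdot U^{\ul{k}}$ with $0\le i_s\le p-1$. The monomial is nonzero exactly when $i_s+pk_s\le a_s$ for all $s$, that is, when $k_s\le a'_s$ with $a'_s$ as in \eqref{eq:def-aprs}. Hence, as graded $A$-modules,
\[M\cong\bigoplus_{\ul{i}}A/\langle U_1^{1+a'_1},\dots,U_n^{1+a'_n}\rangle\,(-|\ul{i}|).\]
A summand with some $a'_s<0$ (equivalently $i_s>a_s$) is zero, which matches the convention $P_{\ul{b}}=0$. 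In characteristic $p$ we have $T^p=U_1+\cdots+U_n$, and this element lies in $A$, so the decomposition is compatible with multiplication by $T^p$. Therefore $M/T^pM$ is the corresponding direct sum of the quotients $(A/\langle U_s^{1+a'_s}\rangle)/(\sum_s U_s)$. After rescaling degrees by $p$, each of these is a copy of $M_{\ul{a}'}/TM_{\ul{a}'}$. Its Hilbert series is therefore $P_{\ul{a}'}(t^p)$, and the shift contributes the factor $t^{i_1+\cdots+i_n}$. Summing over $\ul{i}$ gives $H_p(t)=\sum_{\ul{i}}P_{\ul{a}'}(t^p)\,t^{|\ul{i}|}$.

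Combining Steps 1 and 2 proves the theorem. The main point needing care is the index bookkeeping in Step 1: the shift $(-r+1)$ on $M/TM$ in \eqref{eq:les-Tor-M-kT}, the fact that the rank in Lemma~\ref{lem:rank-partialM} is indexed by the target degree $t$, and the trivial case $r=1$. Once these are kept straight, the coefficient of each $f^{\ul{a}}_c$ comes out to exactly $t^c+\cdots+t^{p-1}$. In Step 2, the key check is that the $A$-module decomposition respects the $T^p$-action, and this is immediate because $T^p\in A$.
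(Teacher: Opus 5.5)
Your proposal is correct and follows the paper's proof essentially step by step. The paper also sums the identities $t^{r-1}P_{\ul{a}}(t)-P^r_{\ul{a}}(t)+P^{r-1}_{\ul{a}}(t)=\sum_{c=1}^{r-1}f^{\ul{a}}_c(t)\,t^{r-1}$, obtained from \eqref{eq:les-Tor-M-kT} and Lemma~\ref{lem:rank-partialM}, over $r=1,\dots,p$, and then identifies the Hilbert series of $M_{\ul{a}}/T^pM_{\ul{a}}$ by decomposing $M_{\ul{a}}$ over $\kk[T_1^p,\dots,T_n^p]$ using $T^p=T_1^p+\cdots+T_n^p$. Your version of that decomposition is stated more carefully than the paper's displayed Frobenius twist: you take the ideal $\langle U_1^{1+a'_1},\dots,U_n^{1+a'_n}\rangle$ and check explicitly that summands with some $a'_s<0$ vanish.
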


\begin{proof} We extend the definition \eqref{eq:Pat} by setting
\[    P^r_{\ul{a}}(t) = \sum_{j\geq 0} \dim_{\kk}\left(\frac{M_{\ul{a}}}{T^rM_{\ul{a}}} \right)_j \cdot t^j\quad\text{for }r\geq 0.\]
It follows from Lemma~\ref{lem:rank-partialM} by taking Euler characteristics that
\[t^{r-1}\cdot P_{\ul{a}}(t) - P^{r}_{\ul{a}}(t) + P^{r-1}_{\ul{a}}(t) = \sum_{c=1}^{r-1} f^{\ul{a}}_c(t)\cdot t^{r-1},\]
where the right side encodes the Hilbert series of the image of the connecting homomorphism $\partial_{M_{\ul{a}}}^r$. Summing the above identities for $r=1,2,\cdots,p$, and using the fact that $P^0_{\ul{a}}(t)=0$, we get
\[P_{\ul{a}}(t)\cdot (1+t+\cdots+t^{p-1}) - P^p_{\ul{a}}(t) = \sum_{r=1}^p \sum_{c=1}^{r-1}f^{\ul{a}}_c(t) \cdot t^{r-1} = \sum_{c=1}^{p-1} f^{\ul{a}}_c(t)\cdot (t^c+t^{c+1}+\cdots+t^{p-1}),\]
where the second equality follows by reversing the summation order. In order to prove our result, it remains to verify that
\[P^p_{\ul{a}}(t) = \sum_{0\leq i_1,\cdots,i_n\leq p-1} P_{\ul{a}'}(t^p)\cdot t^{i_1+\cdots+i_n}.\]
To that end we note that $T^p=T_1^p+\cdots+T_n^p$, and define the Frobenius twist of $M_{\ul{a}}$ to be
\[M_{\ul{a}}^{(p)} = \kk[T_1^p,\cdots,T_n^p]/\langle T_1^{p(1+a_1)}\cdots T_n^{p(1+a_n)}\rangle,
\]
which is a $\kk[T^p]$-module. Moreover, we have a direct sum decomposition of $M_{\ul{a}}$ as a $\kk[T^p]$-module, given by
\[M_{\ul{a}} = \bigoplus_{0\leq i_1,\cdots,i_n\leq p-1}M_{\ul{a}'}^{(p)} \cdot T_1^{i_1}\cdots T_n^{i_n},\]
where $\ul{a}'=\ul{a}'(\ul{i})$ is defined as in \eqref{eq:def-aprs}. We have an identification
\[\frac{M_{\ul{a}'}^{(p)}}{T^pM_{\ul{a}'}^{(p)}} = \frac{M_{\ul{a}'}}{TM_{\ul{a}'}} \oo_{\kk[T]} \kk[T^p],\]
where in the tensor product the map $\kk[T]\lra\kk[T^p]$ is the isomorphism sending $T\lra T^p$. It follows that the Hilbert series of $M_{\ul{a}'}^{(p)}/T^pM_{\ul{a}'}^{(p)}$ is computed by $P_{\ul{a}'}(t^p)$, from which the desired conclusion follows.
\end{proof}

\section{Verlinde algebras and the reduced GHM ring}
\label{sec:Verlinde}

To better understand the polynomials in \eqref{eq:fcat-small-c}, we explain following \cite{CEO} the connection between the reduced GHM ring $\ol{\Delta}^u$ and the Verlinde algebras for $\op{SU}(2)$. This connection leads, via the Verlinde formula, to explicit formulas for the relevant structure constants.

\subsection{Verlinde algebras and their (higher) structure constants}
\label{subsec:Ver-structure-constants}

Following \cite{verlinde}*{Section~4}, we define the \defi{Verlinde algebra $\mc{V}_k$} of $\op{SU}(2)$ at level $k$ to be the commutative $\bb{Z}$-algebra with basis $\phi_0,\cdots,\phi_k$ and multiplication given, for $0\leq a\leq b\leq k$, by
\begin{equation}\label{eq:def-phia-phib}
\phi_a\cdot\phi_b = \sum_{j=0}^{\min(a,k-b)} \phi_{b-a+2j} = \phi_{b-a}+\phi_{b-a+2}+\cdots+\phi_{\min(a+b,2k-a-b)}.
\end{equation}
We will be interested in the \defi{higher structure constants $V_k(a_1,\cdots,a_n;c)$} for multiplication in $\mc{V}_k$, defined by
\[ \phi_{a_1}\cdots\phi_{a_n} = \sum_{c=0}^k V_k(a_1,\cdots,a_n;c) \cdot \phi_c.\]
When $n=2$, we get from \eqref{eq:def-phia-phib} that
\[V_k(a,b;c) = \begin{cases}
    1 & \text{if }|a-b|\leq c\leq\min(a+b,2k-a-b),\ c\equiv a+b\ (\op{mod }2),\\
    0 & \text{otherwise.}
\end{cases}\]
For general $n$, we encode these structure coefficients by defining the \defi{Verlinde symmetric polynomials}
\begin{equation}\label{eq:versym-kc-polys}
\vartheta^{k,c}(z_1,\cdots,z_n) = \sum_{0\leq a_1,\cdots,a_n\leq k} V_k(a_1,\cdots,a_n;c) \cdot z_1^{a_1}\cdots z_n^{a_n}.
\end{equation}
We note that $\vartheta^{k,c}(\ul{z})$ is usually not homogeneous, and we write $\vartheta^{k,c}_d(\ul{z})$ for the degree $d$ component, which can only be nonzero if $d-c$ is even.

\begin{example}\label{ex:Verlinde-polys-k=1}
We consider the special case when $k=1$, when multiplication is given by $\phi_0^2=\phi_1^2=\phi_0$ and $\phi_0\phi_1=\phi_1$. It follows that
\[
V_k(a_1,\cdots,a_n;c) = \begin{cases}
    1 & \text{if }a_1+\cdots+a_n\equiv c\ (\op{mod }2),\\
    0 & \text{otherwise}.
\end{cases}
\]
In particular each non-zero $\vartheta^{1,c}_d(\ul{z})$ is an elementary symmetric polynomial, and we can write
\[\vartheta^{1,0}(\ul{z}) =\frac{1}{2}\cdot\left(\prod_{i=1}^n (1+z_i)+\prod_{i=1}^n (1-z_i) \right),\quad \vartheta^{1,1}(\ul{z}) = \frac{1}{2}\cdot\left(\prod_{i=1}^n (1+z_i)-\prod_{i=1}^n (1-z_i) \right).\]
\end{example}

To obtain general formulas beyond $k=1$, we will use the Verlinde formula: there exists an orthogonal matrix $(S_{l,r})$ given by \cite{verlinde}*{(4.10)}
\begin{equation}\label{eq:def-Smatrix} 
S_{l,r} = \sqrt{\frac{2}{k+2}}\sin\left(\frac{(l+1)(r+1)\pi}{k+2} \right),\quad l,r=0,\cdots,k,
\end{equation}
and the characters of the algebra $\mc{V}_k$ are real, defined by \cite{verlinde}*{(3.12)}
\begin{equation}\label{eq:characters-Vk}
    \psi_r:\mc{V}_k \lra \bb{R},\quad \psi_r(\phi_l) = \frac{S_{l,r}}{S_{0,r}}\quad\text{ for }l,r=0,\cdots,k.
\end{equation}
We can use \eqref{eq:characters-Vk} and the orthogonality relations to determine the coefficients in any expression $\phi = \sum_{l=0}^k \a_l\phi_l$, as follows. We have
\begin{equation}\label{eq:ac-from-Slr-psir}
\a_c = \sum_{l=0}^k \a_l\left(\sum_{r=0}^k S_{l,r} S_{c,r}\right) = \sum_{r=0}^k S_{0,r} S_{c,r} \left(\sum_{l=0}^k \a_l\frac{S_{l,r}}{S_{0,r}}\right) = \sum_{r=0}^k S_{0,r}S_{c,r}\psi_r(\phi).
\end{equation}
When $\phi=\phi_a\phi_b = \sum_{l=0}^k V_k(a,b;l)\phi_l$, this is Verlinde's formula \cite{verlinde}*{(3.11)} for the structure coefficients:
\[V_k(a,b;c) = \sum_{r=0}^k\frac{S_{a,r}S_{b,r}S_{c,r}}{S_{0,r}}.\]
Similar reasoning provides the following description of the Verlinde symmetric polynomials \eqref{eq:versym-kc-polys}.

\begin{proposition}\label{prop:trig-Verlinde-polys}
   If we write $\theta_r = \frac{r\pi}{k+2}$, then for $0\leq c\leq k$ we have
\[\vartheta^{k,c}(\ul{z}) = \frac{2}{k+2}\cdot\sum_{r=1}^{k+1}\sin\left(\theta_r\right)\sin\left((c+1)\theta_r\right)\cdot\prod_{i=1}^n\frac{1-(-1)^r\cdot z_i^{k+2}}{1-2\cos\left(\theta_r\right)z_i+z_i^2}.\]
\end{proposition}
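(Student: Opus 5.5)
We apply \eqref{eq:ac-from-Slr-psir} to the product $\phi=\phi_{a_1}\cdots\phi_{a_n}$, with coefficients $\a_c=V_k(a_1,\cdots,a_n;c)$. Since each $\psi_r$ is a ring homomorphism, $\psi_r(\phi)=\prod_{i=1}^n\psi_r(\phi_{a_i})$. This gives
\[V_k(a_1,\cdots,a_n;c)=\sum_{r=0}^k S_{0,r}S_{c,r}\prod_{i=1}^n\frac{S_{a_i,r}}{S_{0,r}}.\]
We then multiply by $z_1^{a_1}\cdots z_n^{a_n}$ and sum over $0\leq a_i\leq k$. The product over $i$ factors, so we obtain
\[\vartheta^{k,c}(\ul{z})=\sum_{r=0}^k S_{0,r}S_{c,r}\prod_{i=1}^n\left(\sum_{a=0}^k\frac{S_{a,r}}{S_{0,r}}z_i^a\right).\]

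Next we reindex by $r\mapsto r+1$, so that $r$ ranges over $1,\cdots,k+1$, and write $\theta_r=r\pi/(k+2)$. Then $S_{0,r-1}S_{c,r-1}=\frac{2}{k+2}\sin(\theta_r)\sin((c+1)\theta_r)$, which is exactly the prefactor in the statement. Also $S_{a,r-1}/S_{0,r-1}=\sin((a+1)\theta_r)/\sin(\theta_r)=U_a(\cos\theta_r)$, where $U_a$ is a Chebyshev polynomial of the second kind.

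The remaining step, which is the only real computation, is the one-variable identity
\[\sum_{a=0}^k \frac{\sin((a+1)\theta)}{\sin\theta}z^a=\frac{1-(-1)^r z^{k+2}}{1-2\cos(\theta)z+z^2}\quad\text{for }\theta=\theta_r.\]
The plan is to start from the Chebyshev generating function $\sum_{a\geq0}U_a(\cos\theta)z^a=1/(1-2\cos\theta\, z+z^2)$ and truncate it at $a=k$. The tail $\sum_{a\geq k+1}U_a z^a$ equals $z^{k+1}(U_{k+1}+(U_{k+2}-2\cos\theta\,U_{k+1})z)/(1-2\cos\theta\,z+z^2)$ by the three-term recurrence, and it simplifies as follows. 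We have $\sin((k+2)\theta_r)=\sin(r\pi)=0$, so $U_{k+1}(\cos\theta_r)=0$. We also have $\sin((k+3)\theta_r)=\sin(r\pi+\theta_r)=(-1)^r\sin\theta_r$, so $U_{k+2}=(-1)^r$. Hence the tail is $(-1)^r z^{k+2}/(1-2\cos\theta_r z+z^2)$, and subtracting it gives the displayed identity.

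The obstacle is therefore mild. The main care needed is in this truncation bookkeeping, in the index shift of the $S$-matrix, and in noting that $\sin\theta_r\neq0$ for $1\leq r\leq k+1$, so that the divisions are legitimate. Substituting the identity into each factor of the product yields the formula in the statement.
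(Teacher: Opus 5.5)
Your proof is correct and follows essentially the same route as the paper. Both arguments extract coefficients via \eqref{eq:ac-from-Slr-psir}, use multiplicativity of the $\psi_r$ to factor into one-variable sums, and evaluate $\sum_a \psi_r(\phi_a)z^a$ in closed form. The only difference is cosmetic: the paper derives $(1-\phi_1 z+z^2)\phi(z)=1+\phi_k z^{k+2}$ inside $\mc{V}_k[z]$ and then applies $\psi_r$, whereas you truncate the Chebyshev generating function using $U_{k+1}(\cos\theta_r)=0$ and $U_{k+2}(\cos\theta_r)=(-1)^r$, which is the same three-term recurrence read on the trigonometric side.
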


\begin{proof} We define 
\[\phi(z) = \sum_{c=0}^k \phi_c z^c,\]
and expand the product
\[\begin{aligned}
    \prod_{i=1}^n \phi(z_i) &= \sum_{a_1,\cdots,a_n=0}^k \phi_{a_1}\cdots\phi_{a_n} z_1^{a_1}\cdots z_n^{a_n} \\
    &= \sum_{a_1,\cdots,a_n,c=0}^k V_k(a_1,\cdots,a_n;c) z_1^{a_1}\cdots z_n^{a_n} \phi_c \overset{\eqref{eq:versym-kc-polys}}{=} \sum_{l=0}^k \vartheta^{k,l}(z_1,\cdots,z_n)\cdot \phi_l.
\end{aligned}\]
Extending the definition \eqref{eq:characters-Vk} by letting $\psi_r(z_i)=z_i$, it follows from \eqref{eq:ac-from-Slr-psir} that
\begin{equation}\label{eq:varthetakc-fourier-formula}
\vartheta^{k,c}(z_1,\cdots,z_n) = \sum_{r=0}^k S_{0,r}S_{c,r}\prod_{i=1}^n\psi_r\left(\phi(z_i)\right).
\end{equation}
If we make the convention $\phi_{-1}=\phi_{k+1}=0$, then \eqref{eq:def-phia-phib} implies $\phi_1\phi_{c-1}=\phi_c+\phi_{c-2}$ for $c=1,\cdots,k+1$, hence
\[(1-\phi_1 z + z^2)\cdot \phi(z) = 1 + \left(\sum_{c=1}^{k+1}(\phi_c-\phi_{c-1}\phi_1+\phi_{c-2})z^c\right) + \phi_k z^{k+2} = 1 + \phi_k z^{k+2}.\]
Applying $\psi_r$ and noting that 
\[\psi_r(\phi_1) = \frac{\sin(2\theta_{r+1})}{\sin(\theta_{r+1})} = 2\cos(\theta_{r+1}),\quad \psi_r(\phi_k) = \frac{\sin((r+1)\pi-\theta_{r+1})}{\sin(\theta_{r+1})}=(-1)^r,\]
it follows that 
\[\psi_r(\phi(z)) = \frac{1+(-1)^r z^{k+2}}{1-2\cos(\theta_{r+1})z+z^2}.\]
The desired conclusion now follows from \eqref{eq:varthetakc-fourier-formula}, using the identity $S_{0,r}S_{c,r}=\frac{2}{k+2}\sin(\theta_{r+1})\sin((c+1)\theta_{r+1})$, and reindexing so that $r$ runs from $1$ to $k+1$.
\end{proof}

We will also be interested in the algebra $\mc{V}'_k$, defined as the \defi{even part} of $\mc{V}_{2k}$: it is a commutative $\bb{Z}$-algebra with basis $\phi'_a = \phi_{2a}$, $a=0,\cdots,k$, and multiplication given for $0\leq a\leq b\leq k$ by
\begin{equation}\label{eq:def-phi'a-phi'b}
\phi'_a\cdot\phi'_b = \sum_{j=0}^{\min(2a,2k-2b)} \phi'_{b-a+j} = \phi'_{b-a}+\phi'_{b-a+1}+\cdots+\phi'_{\min(a+b,2k-a-b)}.
\end{equation}
In analogy with \eqref{eq:versym-kc-polys}, we consider the \defi{even Verlinde symmetric polynomials} 
\begin{equation}\label{eq:even-versym-kc-polys}
w^{k,c}(z_1,\cdots,z_n) = \sum_{0\leq a_1,\cdots,a_n\leq k} V_{2k}(2a_1,\cdots,2a_n;2c) \cdot z_1^{a_1}\cdots z_n^{a_n},
\end{equation}
for which we will derive a formula analogous to the one in Proposition~\ref{prop:trig-Verlinde-polys}. The characters of $\mc{V}'_k$ are obtained by restriction from $\mc{V}_{2k}$, and are given by
\[\psi'_r:\mc{V}'_k \lra \bb{R},\quad \psi'_r(\phi'_l) = \psi_r(\phi_{2l}),\quad\text{ for }l,r=0,\cdots,k.\]
The obvious restriction of the $S$-matrix of $\mc{V}_{2k}$ is no longer orthogonal, so we consider instead the orthogonal matrix $(S'_{l,r})$ defined by
\begin{equation}\label{eq:def-S'matrix} 
S'_{l,r} = \begin{cases}
    \sqrt{\frac{2}{k+1}}\sin\left(\frac{(2l+1)(r+1)\pi}{2k+2} \right), & \text{if }0\leq r\leq k-1,\ 0\leq l\leq k, \\
    \frac{(-1)^l}{\sqrt{k+1}}, & \text{if } r=k,\ 0\leq l\leq k, 
\end{cases}
\end{equation}
and note that
\[\psi'_r(\phi'_l) = \frac{S'_{l,r}}{S'_{0,r}} = \frac{\sin\left(\frac{(2l+1)(r+1)\pi}{2k+2} \right)}{\sin\left(\frac{(r+1)\pi}{2k+2} \right)}\quad\text{for }l,r=0,\cdots,k.\]

\begin{proposition}\label{prop:trig-even-Verlinde-polys}
   If we write $\theta'_r = \frac{r\pi}{2k+2}$, and $\delta_{l,r}$ for the Kronecker delta, then for $0\leq c\leq k$ we have
\[w^{k,c}(\ul{z}) = \sum_{r=1}^{k+1}\frac{2-\delta_{r,k+1}}{k+1}\cdot\sin(\theta'_{r})\cdot\sin((2c+1)\theta'_{r})\cdot\prod_{i=1}^n\frac{(1+z_i)(1-(-1)^r\cdot z_i^{k+1})}{1-2\cos(2\theta'_r)z_i+z_i^2}.\]
\end{proposition}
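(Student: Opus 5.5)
The plan is to repeat the Fourier-inversion argument of Proposition~\ref{prop:trig-Verlinde-polys}, with the orthogonal matrix $(S'_{l,r})$ of \eqref{eq:def-S'matrix} in place of $(S_{l,r})$.

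\textbf{Step 1: expand a generating product.} Set $\phi'(z)=\sum_{c=0}^k\phi'_c z^c$. The even part $\mc{V}'_k$ is closed under multiplication by \eqref{eq:def-phi'a-phi'b}. Hence a product $\phi'_{a_1}\cdots\phi'_{a_n}=\phi_{2a_1}\cdots\phi_{2a_n}$ computed in $\mc{V}_{2k}$ only involves the basis elements $\phi_{2c}=\phi'_c$, with coefficients $V_{2k}(2a_1,\cdots,2a_n;2c)$. Expanding exactly as in the proof of Proposition~\ref{prop:trig-Verlinde-polys} then gives
\[\prod_{i=1}^n\phi'(z_i)=\sum_{l=0}^k w^{k,l}(\ul{z})\,\phi'_l.\]

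\textbf{Step 2: extract coefficients.} The $\psi'_r$ are algebra homomorphisms, since they are restrictions of the characters $\psi_r$ of $\mc{V}_{2k}$. They satisfy $\psi'_r(\phi'_l)=S'_{l,r}/S'_{0,r}$, and $(S'_{l,r})$ is orthogonal. So the computation \eqref{eq:ac-from-Slr-psir} goes through verbatim with primes, and yields
\[w^{k,c}(\ul{z})=\sum_{r=0}^k S'_{0,r}S'_{c,r}\prod_{i=1}^n\psi'_r(\phi'(z_i)).\]
The prefactors can be read off directly. For $r\le k-1$ one has $S'_{0,r}S'_{c,r}=\frac{2}{k+1}\sin(\theta'_{r+1})\sin((2c+1)\theta'_{r+1})$. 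For $r=k$ one has $S'_{0,k}S'_{c,k}=\frac{(-1)^c}{k+1}$. The latter equals $\frac{1}{k+1}\sin(\theta'_{k+1})\sin((2c+1)\theta'_{k+1})$, because $\theta'_{k+1}=\pi/2$. This accounts for the factor $2-\delta_{r,k+1}$ after reindexing $r\mapsto r-1$.

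\textbf{Step 3: compute $\psi'_r(\phi'(z))$ in closed form.} This is the only genuinely new step. Put $\alpha=\theta'_{r+1}$ and $s_l=\sin((2l+1)\alpha)$. Then $\psi'_r(\phi'_l)=s_l/\sin\alpha$ for every $r$, including $r=k$, where it gives $(-1)^l$. The sequence satisfies the linear recurrence $s_{l+1}-2\cos(2\alpha)s_l+s_{l-1}=0$, with $s_{-1}=-\sin\alpha$. Multiplying $\sum_{l=0}^k s_l z^l$ by $1-2\cos(2\alpha)z+z^2$, all interior coefficients cancel. Only boundary terms survive:
\[s_0+(s_1-2\cos(2\alpha)s_0)z-s_{k+1}z^{k+1}+s_kz^{k+2}.\]
Since $(2k+2)\alpha=(r+1)\pi$, we get $s_{k+1}=(-1)^{r+1}\sin\alpha$ and $s_k=(-1)^r\sin\alpha$. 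We also have $s_1-2\cos(2\alpha)s_0=-s_{-1}=\sin\alpha$. Therefore
\[\psi'_r(\phi'(z))=\frac{(1+z)\bigl(1+(-1)^r z^{k+1}\bigr)}{1-2\cos(2\alpha)z+z^2}.\]
Reindexing $r\mapsto r-1$ turns $\alpha$ into $\theta'_r$ and the sign into $1-(-1)^r z^{k+1}$. Combining this with the prefactors from Step 2 gives the stated formula.

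I expect the main difficulty to be bookkeeping rather than conceptual: tracking the boundary terms of the recurrence and handling the exceptional column $r=k$ of $S'$ consistently with the uniform sine expression.
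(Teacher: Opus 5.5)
Your proposal is correct and follows essentially the same route as the paper. Both arguments expand $\prod_i\phi'(z_i)=\sum_l w^{k,l}(\ul{z})\,\phi'_l$, invert using the orthogonal matrix $(S'_{l,r})$ and the characters $\psi'_r$, and then evaluate $\psi'_r(\phi'(z))$ in closed form. The one small difference is in Step 3: you run the three-term recurrence for $\sin((2l+1)\alpha)$ after applying $\psi'_r$, whereas the paper first proves $(1-(\phi'_1-1)z+z^2)\,\phi'(z)=(1+z)(1+\phi'_k z^{k+1})$ in $\mc{V}'_k$ from $(\phi'_1-1)\phi'_{c-1}=\phi'_c+\phi'_{c-2}$ and $\phi'_1\phi'_k=\phi'_{k-1}$, and only then applies $\psi'_r$; your recurrence is exactly the image of those algebra relations under $\psi'_r$, so the two computations agree.
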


\begin{proof}  We define 
\[\phi'(z) = \sum_{c=0}^k \phi'_c z^c,\]
and expand the product
\[
    \prod_{i=1}^n \phi'(z_i) = \sum_{l=0}^k w^{k,l}(z_1,\cdots,z_n)\cdot \phi'_l.
\]
In analogy with \eqref{eq:varthetakc-fourier-formula}, we obtain
\begin{equation}\label{eq:wkc-fourier-formula}
w^{k,c}(z_1,\cdots,z_n) = \sum_{r=0}^k S'_{0,r}S'_{c,r}\prod_{i=1}^n\psi'_r\left(\phi'(z_i)\right).
\end{equation}
It follows from \eqref{eq:def-phi'a-phi'b} that $(\phi'_1-1)\phi'_{c-1}=\phi'_c+\phi'_{c-2}$ for $c=2,\cdots,k$, and $\phi'_1\phi'_k=\phi'_{k-1}$, hence
\[(1-(\phi'_1-1) z + z^2)\cdot \phi'(z) = 1 + z +  \left(\sum_{c=2}^{k}(\phi'_c-\phi'_{c-1}(\phi'_1-1)+\phi'_{c-2})z^c\right) + \phi'_k(z^{k+1}+z^{k+2}) = (1+z)(1 + \phi'_k z^{k+1}).\]
Applying $\psi'_r$ and noting that 
\[\psi'_r(\phi'_1-1) = \frac{\sin(3\theta'_{r+1})-\sin(\theta'_{r+1})}{\sin(\theta'_{r+1})} = 2\cos(2\theta'_{r+1}),\quad \psi'_r(\phi'_k) = \frac{\sin((r+1)\pi-\theta'_{r+1})}{\sin(\theta'_{r+1})}=(-1)^r,\]
it follows that 
\[\psi'_r(\phi'(z)) = \frac{(1+z)(1+(-1)^r z^{k+1})}{1-2\cos(2\theta'_{r+1})z+z^2}.\]
The formula \eqref{eq:def-S'matrix} is equivalent to
\[ S'_{l,r} = \sqrt{\frac{2-\delta_{r,k}}{k+1}}\sin\left((2l+1)\theta'_{r+1}\right),\]
which applied to \eqref{eq:wkc-fourier-formula} yields (after reindexing) the desired formula for $w^{k,c}(\ul{z})$ and concludes our proof.
\end{proof}

We will be interested in the Verlinde symmetric polynomials $\vartheta^{k,c}(\ul{z})$ for $k=p-2$, and $w^{k,c}(\ul{z})$ for $k=p-1$. In that case we have $\theta_r = r\pi/p=2\theta'_r$. Comparing \eqref{eq:def-Acz} with Proposition~\ref{prop:trig-Verlinde-polys}, we obtain
\begin{equation}\label{eq:Ac=vthetap-2c-1} A_c(\ul{z}) = \vartheta^{p-2,c-1}(\ul{z})\quad\text{for }c=1,\cdots,p-1.
\end{equation}
Similarly, \eqref{eq:def-Thetaz-Theta'z} together with Proposition~\ref{prop:trig-even-Verlinde-polys} yields
\begin{equation}\label{eq:Thetaz=wp-1} 
\Theta(\ul{z}) = w^{p-1,0}(\ul{z}),\quad \Theta'(\ul{z}) = w^{p-1,p-1}(\ul{z}).
\end{equation}

\subsection{Verlinde polynomials and modular characters}
\label{subsec:Verlinde-from-tableaux}

Our next goal is to give a more combinatorial interpretation of the Verlinde polynomials, and explain how the graded components of $A_c(\ul{z})$ are simple modular characters of $\GL_n$. To that end we interpret the multiplication \eqref{eq:def-phia-phib} as a \defi{truncated Pieri rule} as follows.

Recall that to a partition $\ll=(\ll_1,\cdots,\ll_\ell)$ we can associate a \defi{Young diagram} of shape $\ll$, which is a left-justified array of boxes with $\lambda_i$ boxes in row $i$. A \defi{Young tableau} $T$ of shape $\lambda$ is a filling of these boxes with positive integers, and the weight $\ul{a}=(a_1,a_2,\cdots)$ of $T$ is defined by letting $a_i$ count the number of entries of $T$ equal to $i$. We say that $T$ is \defi{semistandard} if entries weakly increase along rows and strictly increase down columns. For a semistandard tableau $T$, we write $T[i]$ for the subtableau whose entries are $\leq i$. For $\ll=(5,2)$ we display below the corresponding Young diagram, along with a semistandard tableau $T$ of shape $\ll$, and some of its subtableaux of the form $T[i]$. 
\[
\ytableausetup{aligntableaux=center}
\ydiagram{5,2}\qquad
T=\begin{ytableau}
1 & 1 & 2 & 3 & 3\\
2 & 2 
\end{ytableau}\qquad
T[2]=\begin{ytableau}
1 & 1 & 2\\
2 & 2 
\end{ytableau}\qquad
T[1]=\begin{ytableau}
1 & 1  
\end{ytableau}
\]
Given a semistandard tableau $T$ with $2$ rows, we say that $T$ is \defi{$k$-restricted} if it satisfies the following condition for each $i$: if we write $\mu=(\mu_1,\mu_2)$ for the shape of $T[i]$, then the entries of $T[i+1]\setminus T[i]$ are contained in columns $\mu_2+1,\cdots,\mu_2+k$ (in particular, the weight $\ul{a}$ of $T$ satisfies $a_j\leq k$ for all $j$). The tableau $T$ above is $3$-restricted, but not $2$-restricted, while the displayed tableaux below are not $3$-restricted:
\[
\ytableausetup{aligntableaux=center}
\begin{ytableau}
1 & 1 & 2 & 2 & 3\\
2 & 3 
\end{ytableau}\qquad\qquad\qquad
\begin{ytableau}
1 & 1 & 2 & 2 & 2\\
3 & 3 
\end{ytableau}
\]
If we associate to a tableau of shape $\ll=(\ll_1,\ll_2)$ the element $\phi_{\ll_1-\ll_2}$ in $\mc{V}_k$, then we get a bijection between the summands in \eqref{eq:def-phia-phib} and the $k$-restricted tableaux of weight $(a,b)$: indeed, any such tableau $T$ will have $a$ entries equal to $1$ and $i$ entries equal to $2$ in the first row, and $b-i$ more entries in the second row; additionally, it will have to satisfy $b-i\leq a$ in order to be semistandard, and $a+i\leq k$ in order to be $k$-restricted, so $\max(0,b-a)\leq i\leq k-a$. The corresponding element in $\mc{V}_k$ is $\phi_{a-b+2i}$, which yields the desired identification.

\begin{example}\label{ex:Pieri-k=5} For a concrete example of the truncated Pieri rule described above we take $k=5$ and consider the products $\phi_3\cdot\phi_4=\phi_4\cdot\phi_3$. Identifying a tableau with the corresponding basis element in $\mc{V}_k$, we have
\[
\ytableausetup{aligntableaux=center}
\begin{aligned}
 \phi_3\cdot\phi_4 &= \ytableaushort{1112,222} + \ytableaushort{11122,22} = \phi_1+\phi_3\\
 \phi_4\cdot\phi_3 &= \ytableaushort{1111,222} + \ytableaushort{11112,22} = \phi_1+\phi_3
\end{aligned}
\]    
\end{example}
Arguing by induction, it follows that if $a_1+\cdots+a_n=2m+c$ then
\begin{equation}\label{eq:Vka1n}
    V_k(a_1,\cdots,a_n;c) = \text{the number of $k$-restricted tableaux of shape $(m+c,m)$ and weight $\ul{a}$.}
\end{equation}
Going back to the case $k=3$ and $\ul{a}=(2,3,2)$, we get $\phi_2\cdot\phi_3\cdot\phi_2 = \phi_1+\phi_3$, with corresponding tableaux
\[\ytableausetup{aligntableaux=center}
\begin{ytableau}
1 & 1 & 2 & 3\\
2 & 2 & 3
\end{ytableau}\qquad\text{and}\qquad
\begin{ytableau}
1 & 1 & 2 & 3 & 3\\
2 & 2 
\end{ytableau}\]

We let $\mc{T}^{k,2}_{res}$ denote the set of $k$-restricted tableaux (which are semistandard and have at most $2$ rows). We write $T_{u,v}$ for the entry in row $u$, column $v$ of $T$, and prove the following property of tableaux in~$\mc{T}^{k,2}_{res}$.

\begin{lemma}\label{lem:kstrict=ineqTuv}
    If $T\in\mc{T}^{k,2}_{res}$ has shape $\ll=(\ll_1,\ll_2)$ then 
    \begin{equation}\label{eq:ineqTuv-kstrict} 
    T_{(2,v)} < T_{(1,v+k)}\text{ for all }1\leq v\leq \min(\ll_2,\ll_1-k).
    \end{equation}
\end{lemma}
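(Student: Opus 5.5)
We argue directly from the definition of $k$-restricted, applied at the step where the entry $T_{(2,v)}$ is added. Fix $v$ with $1\leq v\leq\min(\ll_2,\ll_1-k)$ and set $i+1 = T_{(2,v)}$. Since $T$ is semistandard, entries strictly increase down columns, so $T_{(1,v)}<T_{(2,v)}$; hence $i\geq 1$ and the box $(2,v)$ lies in $T[i+1]\setminus T[i]$. Write $\mu=(\mu_1,\mu_2)$ for the shape of $T[i]$.

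The key observation is that $\mu_2\geq v-1$. Indeed, rows are weakly increasing, so every box $(2,v')$ with $v'<v$ has entry $\leq i+1$. This alone gives no bound, because such entries could equal $i+1$. So we argue differently. Let $v_0\leq v$ be the leftmost column in row $2$ whose entry equals $i+1$. Then $\mu_2=v_0-1$, and the boxes $(2,v_0),\dots,(2,v)$ all lie in $T[i+1]\setminus T[i]$. The restriction condition applies to every box of $T[i+1]\setminus T[i]$, and in particular to $(2,v)$. It gives $v\leq \mu_2+k$, but this inequality is not the conclusion we want. What we need to control is the first row.

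To control the first row, we argue by contradiction. Suppose $T_{(1,v+k)}\leq T_{(2,v)}=i+1$. The box $(1,v+k)$ exists since $v+k\leq\ll_1$, and its entry is $\leq i+1$, so it lies in $T[i+1]$. There are two cases.

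\emph{Case 1: $T_{(1,v+k)}=i+1$.} Then $(1,v+k)\in T[i+1]\setminus T[i]$, and the restriction condition forces $v+k\leq \mu_2+k=v_0-1+k\leq v-1+k$. This is a contradiction.

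\emph{Case 2: $T_{(1,v+k)}\leq i$.} Then $\mu_1\geq v+k$. By semistandardness, the new second-row boxes of $T[i+1]$ occupy columns $v_0,\dots,$ at least $v$. For each $v'$ in this range, the box $(1,v')$ lies in $T[i]$, since its entry is below $i+1$ in a strictly increasing column. This observation does not immediately contradict anything, so more work is needed. The right tool is the iterated restriction: each time a row-$2$ box at column $v'$ enters (at step $j$, say), it must satisfy $v'\leq \mu_2^{(j)}+k$. Meanwhile, first-row entries $\leq i$ reaching column $v+k$ must themselves have entered at earlier steps $j'\leq i$, subject to the constraint column $\leq \mu_2^{(j')}+k\leq \mu_2^{(i)}+k=v_0-1+k<v+k$. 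This contradicts the presence of $(1,v+k)$ in $T[i]$.

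Hence the cleanest route is a single monotonicity statement: the first row of $T[j]$ has length at most $\mu_2(T[j-1])+k$, and the second-row lengths are weakly increasing in $j$. Combined with $\mu_2(T[i])=v_0-1\leq v-1$, this shows that every first-row entry in column $\geq v+k$ is $\geq i+1$. In Case 1 that entry equals $i+1$ and is excluded as above, so every such entry is in fact $>i+1$.

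Thus the whole proof reduces to one lemma: for all $j$, if $T_{(1,w)}=j$ then $w\leq \mu_2(T[j-1])+k$, with $\mu_2(T[j-1])\leq \mu_2(T[i])$ whenever $j\leq i+1$. This lemma is just the definition of $k$-restricted together with monotonicity of shapes. The main point to get right is identifying $\mu_2(T[i])\leq v-1$ via the leftmost entry equal to $i+1$ in row $2$.
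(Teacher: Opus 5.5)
Your argument is correct and is essentially the paper's proof. In both, you set $i+1=T_{(2,v)}$, note that $\mu_2(T[i])\le v-1$, and use the restriction condition to show that the first row of $T[i+1]$ has at most $\mu_2(T[i])+k<v+k$ boxes. For entries equal to $i+1$ this comes directly from the step-$(i+1)$ condition; for older entries it uses $\mu_1\le\mu_2+k$, which the paper asserts without comment and your monotonicity lemma justifies.

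One correction: delete your opening ``key observation'' $\mu_2\ge v-1$, which is false in general, for example $T$ with rows $1\,1\,3\,3$ and $2\,2$, $k=2$, $v=2$. The inequality you actually use, $\mu_2\le v-1$, holds immediately because $(2,v)\notin T[i]$, so the detour through $v_0$ is unnecessary.
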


\begin{proof}
    Suppose that $T\in\mc{T}^{k,2}_{res}$, consider $1\leq v\leq \min(\ll_2,\ll_1-k)$, and write $T_{(2,v)}=i+1$. We let $\mu=(\mu_1,\mu_2)$ be the shape of $T[i]$, so that $\mu_2<v$ and $\mu_1\leq\mu_2+k<v+k$. By definition, the entries of $T[i+1]\setminus T[i]$ are contained in columns $\mu_2+1,\cdots,\mu_2+k<v+k$, hence $T[i+1]$ has at most $v+k-1$ columns. This implies that $T_{(1,v+k)}>i+1=T_{(2,v)}$, as desired.
\end{proof}

To build towards the framework of \cite{mat-pap}, we say that a partition (or Young diagram) $\ll$ is \defi{$(k,r)$-special} if $\ll_1\leq k$ and $\ll$ has at most $r$ parts of size $<k$: equivalently, we can write $\ll=(\ll_1,\cdots,\ll_{s+t})$, where $\ll_1=\cdots=\ll_s=k$, and $t\leq r$. We say that a semistandard tableau $T$ is \defi{$(k,r)$-semistandard} if the shape of $T[i]$ is $(k,r)$-special for all $i$, and write $\mc{T}^{k,r}_{ss}$ for the set of $(k,r)$-semistandard tableaux. With this notation, we have that for $m<p$ the partition $\ll$ is $(m,p-m)$-special if and only if it is $m$-special in the sense of \cite{mat-pap}; similarly, a tableau $T$ is $(m,p-m)$-semistandard if and only if it is $m$-semistandard in the sense of  \cite{mat-pap}. 

\begin{lemma}\label{lem:special-vs-restricted}
    There exists a weight-preserving bijection between $\mc{T}^{k,2}_{res}$ and $\mc{T}^{k,2}_{ss}$.
\end{lemma}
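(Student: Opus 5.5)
Since the statement only asserts the existence of a weight-preserving bijection, it suffices to show that for every weight $\ul{a}$ the two sets $\{T\in\mc{T}^{k,2}_{res}:\text{weight }\ul{a}\}$ and $\{T\in\mc{T}^{k,2}_{ss}:\text{weight }\ul{a}\}$ have the same finite cardinality. I would prove this by a transfer-matrix argument. Both kinds of tableaux are built by adding the entries $i=1,2,\ldots$ successively as horizontal strips: $T[i+1]\setminus T[i]$ is a horizontal strip of size $a_{i+1}$. So each set is counted by walks in a graph on shapes, where a step of size $a$ adds an admissible horizontal strip. On the restricted side the vertices are the shapes $(m+c,m)$ with $0\le c\le k$. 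A step adds $j$ boxes to row $1$ and $a-j$ boxes to row $2$, with the constraints $a-j\le c$ (semistandardness) and $c+j\le k$ (restriction). By the truncated Pieri discussion preceding \eqref{eq:Vka1n}, only $c$ matters and the transitions are exactly multiplication by $\phi_a$ in $\mc{V}_k$. The restricted count is therefore $\sum_c V_k(\ul{a};c)$.

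The main step is to show that the $(k,2)$-special shapes, with steps given by horizontal strips that preserve specialness, form a graph whose walk counts are also governed by multiplication in $\mc{V}_k$. A special shape is $(k^s,t_1,t_2)$ with $k>t_1\ge t_2\ge 0$. A horizontal strip cannot touch the full rows, since every full row has the length $k$ of the row above it, and it can reach at most one new row below. Write $x,y,z$ for the number of boxes added to rows $s+1,s+2,s+3$, with $x\le k-t_1$, $y\le t_1-t_2$, $z\le t_2$. There are two cases.
\begin{itemize}
\item If $t_1+x<k$, specialness forces $z=0$.
\item If $t_1+x=k$, row $s+1$ becomes full and the new short rows are $(t_2+y,z)$.
\end{itemize}
I would attach to $(k^s,t_1,t_2)$ a $\mc{V}_k$-label. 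The first guess is $\phi_{t_1-t_2}$ for the non-wrapping part, with the wrap-around case contributing the label $\phi_{t_2+y-z}$; the task is then to check that the multiset of new labels reachable by strips of size $a$ is exactly the multiset of summands of the current label times $\phi_a$. If $t_1-t_2$ alone does not determine the count of reachable labels, I would instead check the weaker statement that the generating function of walks by final label satisfies the recursion $\Phi\mapsto\Phi\cdot\phi_a$. That suffices for the cardinality identity.

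I expect the bookkeeping in the previous step to be the main obstacle, in particular the wrap-around transitions where a row fills up to length $k$ and a new row opens. The first thing I would try is the case $k=1$ (both sides have exactly one shape per size, and every step adds at most one box) and the case $k=2$ by hand, to pin down the correct label. If the label bookkeeping becomes messy, the fallback is a direct combinatorial bijection using Lemma~\ref{lem:kstrict=ineqTuv}. The inequalities $T_{(2,v)}<T_{(1,v+k)}$ there are exactly the column-strictness conditions one obtains after cutting the long first row of a restricted tableau into pieces of length $k$ and stacking them with the second row. I would try to define the map by reading off, at each stage $i$, the special shape determined by the restricted shape of $T[i]$. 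Lemma~\ref{lem:kstrict=ineqTuv} would then verify column-strictness of the image, and the inverse would be obtained by unwrapping the full rows.
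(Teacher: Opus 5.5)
\textbf{There is a genuine gap: the key step of your transfer-matrix argument is not carried out, and the label you propose makes it fail.} The rest is sound: the reduction to equal counts in each weight, the walk description of $\mc{T}^{k,2}_{res}$ (recovering $\sum_c V_k(\ul{a};c)$), and your $x,y,z$ analysis of horizontal strips on $(k^s,t_1,t_2)$ are all correct.

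The problem is in the step you flag as the main one. Your ``weaker'' fallback uses the same labels, so it fails for the same reason. Labelling $(k^s,t_1,t_2)$ by $\phi_{t_1-t_2}$ independently of $s$ breaks at the first wrap-around. A strip of size $k$ on the empty shape gives $(k)=(k^1,0,0)$, labelled $\phi_0$, while $\phi_0\phi_k=\phi_k$. For $k=1$, every shape $(1^s)$ gets $\phi_0$, although a step of size $1$ must realize multiplication by $\phi_1$.

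The missing idea is that each completed row of length $k$ contributes a factor $\phi_k$, i.e.\ it reflects $c\mapsto k-c$. The correct label is $\phi_k^s\phi_{t_1-t_2}$: this is $\phi_{t_1-t_2}$ for $s$ even and $\phi_{k-t_1+t_2}$ for $s$ odd. It matches the paper's shape correspondence $\lambda\mapsto\alpha(\lambda)$. With this label your check goes through. Take $s$ even and $d=t_1-t_2$.
\begin{itemize}
\item Non-wrapping strips give the labels $d-a+2x$ for $\max(0,a-d)\le x\le\min(a,k-t_1-1)$.
\item Wrapping strips ($x=k-t_1$, $y+z=a-k+t_1$) give the labels $d+a-2y$ for $\max(0,a+d-k)\le y\le\min(d,a-k+t_1)$.
\end{itemize}
Together these are exactly $|d-a|,|d-a|+2,\ldots,\min(d+a,2k-d-a)$, each once. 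For $s$ odd everything is reflected by $c\mapsto k-c$, which matches $\phi_{k-d}\phi_a=\phi_k\phi_d\phi_a$.

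\textbf{Comparison with the paper.} Once repaired, your route differs genuinely from the paper's. The paper writes down $\alpha$ explicitly: cut \emph{both} rows into blocks of length $k$ and stack $A^{(1)},B^{(1)},A^{(2)},B^{(2)},\ldots$. Its inverse $\beta$ concatenates the odd rows and the even rows. The paper verifies both maps directly:
\begin{itemize}
\item Lemma~\ref{lem:kstrict=ineqTuv} gives column-strictness of $\alpha(T)$.
\item The compatibility $\alpha(T)[i]=\alpha(T[i])$ reduces specialness to a computation on shapes.
\item For $\beta$, one must check the junction inequalities $S_{(u,k)}\le S_{(u+2,1)}$ and $k$-restrictedness; both use that \emph{every} $S[i]$ is special.
\end{itemize}
Your approach replaces these global checks by a local one-step case analysis. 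Since $\phi_c\phi_a$ is multiplicity-free, a walk is determined by its sequence of labels. This gives an explicit bijection that also matches final labels, hence the shapes $\lambda$ and $\alpha(\lambda)$, because size and label determine the shape on either side. That shape-refined statement is exactly what is used right after, in \eqref{eq:Ac-from-lmu}. The paper's construction, in return, gives a transparent description at the level of tableaux.

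Your own fallback is the paper's route, but as sketched it has two omissions. It cuts only the first row, whereas both rows must be cut. It also leaves out the nontrivial half: showing that unwrapping a $(k,2)$-semistandard tableau yields a semistandard, $k$-restricted one.
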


\begin{proof}
    We write $T$ for a typical tableau in $\mc{T}^{k,2}_{res}$ and $\ll=(\ll_1,\ll_2)$ for its shape, noting that $\ll_1\leq \ll_2+k$. Similarly we write $S$ for a tableau in $\mc{T}^{k,2}_{ss}$, and $\mu$ for its shape, which can be written as
\begin{equation}\label{eq:k2special-mu}
    \mu=(\underbrace{k,\ldots,k}_{s\text{ times}},a,b),
\qquad \text{where }k>a\geq b\geq 0.
\end{equation}
Our goal is to define weight-preserving inverse transformations
\[ \a:\mc{T}^{k,2}_{res}\lra\mc{T}^{k,2}_{ss},\quad \b:\mc{T}^{k,2}_{ss}\lra \mc{T}^{k,2}_{res}.\]
Given $T\in \mc{T}^{k,2}_{res}$, we write $A$, $B$ for its row words, and divide each row into consecutive blocks of size $k$, allowing the last block to be shorter:
\[
A=A^{(1)}A^{(2)}\cdots,
\qquad
B=B^{(1)}B^{(2)}\cdots .
\]
We let $S$ be the tableau with rows $A^{(1)},B^{(1)},A^{(2)},B^{(2)},\cdots$, and set $\a(T)=S$. The fact that $S$ is semistandard follows from Lemma~\ref{lem:kstrict=ineqTuv}, so to show that $\a$ is well-defined we have to verify that each $S[i]$ has a $(k,2)$-special shape. Note that $T[i]\in \mc{T}^{k,2}_{res}$ and by construction $S[i]=\a(T)[i]=\a(T[i])$, so we may without loss of generality assume that $S=S[i]$. We write $\ll_2=tk+r$ with $0\leq r<k$, and note that $\ll_1\leq \ll_2+k$ is given by either
\[ (1)\ \ll_1 = tk+r',\text{ where }r\leq r'<k,\quad\text{or}\quad (2)\ \ll_1=(t+1)k+r',\text{ where }0\leq r'\leq r.\]
It follows that $S$ has shape $\mu$ as in \eqref{eq:k2special-mu}, where in case (1) $s=2t$, $a=r'$, $b=r$, and in case (2) $s=2t+1$, $a=r$, $b=r'$. Since $\mu$ is $(k,2)$-special, we conclude that $\a$ is well-defined.

Conversely, given $S\in \mc{T}^{k,2}_{ss}$ we define $A$ to be the word obtained by concatenating the odd rows of $S$, and let $B$ be the word obtained by concatenating the even rows. Let $T$ be the $2$-row tableau with row words $A$ and $B$, and set $\b(S)=T$ once we have shown that $T$ is $k$-restricted. With notation \eqref{eq:k2special-mu}, if $s=2t$ then $T$ has shape $\ll=(tk+a,tk+b)$, and if $s=2t+1$ then $T$ has shape $\ll=((t+1)k+b,tk+a)$. Since $S$ has increasing columns, it follows that $T$ does as well. In order to prove that $T$ is semistandard it suffices to check, for each~$u$, the junction between row $u$ and row $u + 2$ of $S$, where we need to verify that $S_{(u,k)}\leq S_{(u+2,1)}$.

To that end we fix $u\leq s$ (where we assume that $b>0$ if $u=s$), write  $S_{(u,k)}=i+1$, and suppose by contradiction that $S_{(u+2,1)}\leq i$. If we write $\nu$ for the shape of $S[i]$, it follows that $\nu_u<k$ and $\nu_{u+2}>0$, and therefore $\nu$ is not $(k,2)$-special. This is a contradiction which proves that $S_{(u,k)}\leq S_{(u+2,1)}$, hence $T$ is semistandard. It remains to verify that $T$ is $k$-restricted.

If we assume that $T[i]$ has shape $\gamma=(\gamma_1,\gamma_2)$ then we have to prove that $T[i+1]$ has at most $\gamma_2+k$ columns. Suppose that $S[i]$ has shape $(k^{r},c,d)$, where $k>c\geq d\geq 0$. Since $S[i+1]$ is semistandard with at most $k$ columns, it follows that the entries of $S[i+1]\setminus S[i]$ can only be located at
\begin{itemize}
    \item $(r+1,c+1),(r+1,c+2),\cdots,(r+1,k)$,
    \item $(r+2,d+1),(r+2,d+2),\cdots,(r+2,c)$,
    \item $(r+3,1),(r+3,2),\cdots,(r+3,d)$.
\end{itemize}
If $r=2m$ is even, then $T[i+1]$ is obtained from $T[i]$ by adding at most $(k-c+d)$ entries to the first row, and at most $(c-d)$ entries to the second row. Since $\gamma=(mk+c,mk+d)$, this implies that $T[i+1]$ has at most 
\[\max(\gamma_1+k-c+d,\gamma_2+c-d)=\max((m+1)k+d,mk+c)=(m+1)k+d=\gamma_2+k\]
columns. If $r=2m+1$ is odd, then $T[i+1]$ is obtained from $T[i]$ by adding at most $(c-d)$ entries to the first row, and at most $(k-c+d)$ entries to the second row. We have $\gamma=((m+1)k+d,mk+c)$, so $T[i+1]$ has at most
\[\max(\gamma_1+c-d,\gamma_2+k-c+d)=\max((m+1)k+c,(m+1)k+d)=(m+1)k+c=\gamma_2+k\]
columns. This concludes the proof that $T$ is a $k$-restricted tableau, so $\beta$ is a well-defined transformation.

Since $\a,\b$ are inverse constructions and are weight-preserving, this concludes our proof.
\end{proof}

For $\ll=(\ll_1,\ll_2)$ with $\ll_1\leq\ll_2+k$ we write $\mc{T}^{k,2}_{res}(\ll;n)$ for the set of $k$-restricted tableaux of shape $\ll$ with entries in $\{1,\cdots,n\}$. Similarly, for $\mu$ as in \eqref{eq:k2special-mu} we write $\mc{T}^{k,2}_{ss}(\mu;n)$ for the set of $(k,2)$-semistandard tableaux with entries in $\{1,\cdots,n\}$. The construction in the proof of Lemma~\ref{lem:special-vs-restricted} induces bijections
\[ \a:\mc{T}^{k,2}_{res}(\ll;n)\lra\mc{T}^{k,2}_{ss}(\mu;n),\ \b:\mc{T}^{k,2}_{ss}(\mu;n)\lra \mc{T}^{k,2}_{res}(\ll;n),\text{ when }\ll_1=\mu_1+\mu_3+\cdots,\ \ll_2 = \mu_2+\mu_4+\cdots.\]
If $\ll$ is $k$-restricted and $\mu$ is $(k,2)$ special, and if they are related as above, then we also write $\a(\ll)=\mu$ and $\b(\mu)=\ll$. More explicitly, we have
\[ \b(\mu) = (\mu_1+\mu_3+\cdots,\mu_2+\mu_4+\cdots).\]
For $\ll=(\ll_1,\ll_2)$, if we write $\ll_2=tk+b$ with $0\leq b<k$, and $\ll_1=tk+a$ with $0\leq a \leq k+b$, then we have
\[ \a(\ll) = \begin{cases}
    (k^{2t},a,b) & \text{if }0\leq a\leq k, \\
    (k^{2t+1},b,a-k) & \text{if }k\leq a\leq k+b.
\end{cases}\]

We next consider the special case $k=p-2\geq 1$, and show that the graded components of $A_{c+1}(\ul{z})=\vartheta^{p-2,c}(\ul{z})$ are simple modular characters (recall that for $p=2$, $A_1(\ul{z})=1$). Given a partition $\mu$, we write $\ell_{\mu}(\ul{z})=\ell_{\mu}^p(\ul{z})$ for the character of the simple $\GL_n$-module of highest weight $\mu$ in characteristic $p$ (and make the convention that $\ell_{\mu}(\ul{z})=0$ if $\mu$ has more than $n$ parts). If we write $\ul{z}^S=z_1^{a_1}\cdots z_n^{a_n}$ where $\ul{a}$ is the weight of a tableau $S$, then it follows from \cite{mat-pap}*{Theorem~4.3} that
\begin{equation}\label{eq:charmu-p-2special}
    \ell_{\mu}(\ul{z}) = \sum_{S\in \mc{T}^{p-2,2}_{ss}(\mu;n)} \ul{z}^S,\qquad\text{ if $\mu$ is $(p-2,2)$-special}.
\end{equation}
Writing $\mu=\a(\ll)$ for some $(p-2)$-restricted partition, it follows that
\[\ell_{\a(\ll)}(\ul{z}) = \sum_{T\in \mc{T}^{p-2,2}_{res}(\ll;n)} \ul{z}^T.\]
Combining this with \eqref{eq:Vka1n} and \eqref{eq:Ac=vthetap-2c-1}, it follows that for each $c=0,\cdots,p-2$ we have
\begin{equation}\label{eq:Ac-from-lmu}
A_{c+1}(\ul{z})=\vartheta^{p-2,c}(\ul{z}) = \sum_{\substack{m\geq 0 \\ \ll=(m+c,m)}}\ell_{\a(\ll)}(\ul{z}),
\end{equation}
and therefore the non-zero graded components of the Verlinde polynomials at level $p-2$ are simple modular characters of the general linear group.

If we combine \eqref{eq:Thetaz=wp-1} with \eqref{eq:even-versym-kc-polys} and \eqref{eq:Vka1n}, then we obtain combinatorial descriptions for $\Theta(\ul{z})$ and $\Theta'(\ul{z})$ in terms of $(2p-2)$-restricted tableaux, and applying Lemma~\ref{lem:special-vs-restricted} we obtain equivalent formulations using $(2p-2,2)$-semistandard tableaux. Nevertheless, the results of \cite{mat-pap} no longer apply, and it is not apparent how to express $\Theta(\ul{z})$ and $\Theta'(\ul{z})$ as linear combinations of simple modular characters. Instead we can use the trigonometric interpretation to express $\Theta(\ul{z})$ and $\Theta'(\ul{z})$ in terms of elementary symmetric polynomials and the symmetric functions $A_c(\ul{z})$. More precisely, we have
\[
\Theta(\ul{z})=\frac{E(\ul{z})}{p}\cdot
\left(\Psi_p(\ul{z})+\sum_{c=1}^{p-1}(-1)^{c-1}\cdot(p-c)\cdot A_c(\ul{z})\right),\
\Theta'(\ul{z})=\frac{(-1)^{p+1}E(\ul{z})}{p}
\left(
\Psi_p(\ul{z})+\sum_{c=1}^{p-1}(-1)^c \cdot c\cdot A_c(\ul{z})
\right),
\]
where $\Psi_p(\ul{z})$ is as in \eqref{eq:def-thetar-Psir}. Indeed, noting that the term $r=p$ in \eqref{eq:def-Thetaz-Theta'z} contributes $\displaystyle\frac{E(\ul{z})\cdot \Psi_p(\ul{z})}{p}$ to $\Theta(\ul{z})$ and $\displaystyle\frac{(-1)^{p+1}E(\ul{z})\cdot \Psi_p(\ul{z})}{p}$ to $\Theta'(\ul{z})$, it suffices to identify the coefficients of $\Psi_r(\ul{z})$ for $r=1,\cdots,p-1$. The formulas for $\Theta(\ul{z})$ and $\Theta'(\ul{z})$ follow from the trigonometric identities
\[\frac{2}{p}\cdot\sum_{c=1}^{p-1}(-1)^{c-1}\cdot(p-c)\cdot\sin(c\theta_r)
=\tan\left(\frac{\theta_r}{2}\right),\qquad
\frac{2}{p}\cdot\sum_{c=1}^{p-1}(-1)^c\cdot c\cdot\sin(c\theta_r)
= (-1)^{r-p}\cdot\tan\left(\frac{\theta_r}{2}\right),\]
which in turn can be derived based on \cite{GRints}*{Section~1.34}, or by substituting $z=e^{-i\theta_r}$ into
\[\sum_{c=1}^{p-1}z^c = \frac{z-z^p}{1-z}\quad\text{and}\quad \sum_{c=1}^{p-1}c\cdot z^c = \frac{z-p\cdot z^{p}+(p-1)\cdot z^{p+1}}{(1-z)^2}.\]

\subsection{The reduced GHM ring as a tensor product of Verlinde algebras}
\label{subsec:redGHM=tensor-Verlinde}
We write $[a,b] = \{a,a+1,\cdots,b\}$, $[a,b]_p = \{i\in[a,b] : p\nmid i\}$, and for $q=p^s$ consider the involution
\begin{equation}\label{eq:def-iota} 
\iota_q : [1,q-1]_p \lra [1,q-1]_p,\quad\iota_q(c)=q-c.
\end{equation}
\begin{lemma}\label{lem:bij-pq-iota}
If $q=p^s$, $s\geq 1$, and $\iota=\iota_q$ is as in \eqref{eq:def-iota}, then there exists a bijection
\[ \gamma_q: [0,p-1] \times [1,q-1]_p \lra [1,pq-1]_p, \]
defined by $\gamma_q(c_0,c_1) = c_0q + \iota^{c_0}(c_1)$, where $\iota^{c_0}$ denotes the composition of $\iota$ with itself $c_0$ times.
\end{lemma}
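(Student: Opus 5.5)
The plan is to check three things directly: $\gamma_q$ lands in $[1,pq-1]_p$, it is injective, and the two finite sets have the same cardinality.

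\emph{Well-definedness.} Since $\iota$ is an involution of $[1,q-1]_p$, the map $\iota^{c_0}$ is the identity when $c_0$ is even and equals $\iota$ when $c_0$ is odd. In either case $c_1':=\iota^{c_0}(c_1)\in[1,q-1]_p$. Hence
\[
1\le c_0q+c_1'\le (p-1)q+q-1=pq-1 .
\]
Because $p\mid q$, we have $c_0q+c_1'\equiv c_1'\pmod p$, and $c_1'$ is not divisible by $p$. So $\gamma_q(c_0,c_1)\in[1,pq-1]_p$.

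\emph{Injectivity.} Suppose $\gamma_q(c_0,c_1)=\gamma_q(d_0,d_1)$. Since $1\le c_1'\le q-1$, the integer $c_0$ is recovered as $\lfloor \gamma_q(c_0,c_1)/q\rfloor$, and $c_1'$ is recovered as the remainder of $\gamma_q(c_0,c_1)$ modulo $q$. This gives $c_0=d_0$ and $\iota^{c_0}(c_1)=\iota^{c_0}(d_1)$. Since $\iota^{c_0}$ is a bijection, $c_1=d_1$.

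\emph{Cardinality.} We have $|[1,m]_p|=m-m/p$ whenever $p\mid m$. Therefore
\[
|[0,p-1]\times[1,q-1]_p|=p\,(q-q/p)=pq-q=|[1,pq-1]_p| .
\]
An injection between finite sets of equal size is a bijection, which proves the lemma.

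Equivalently, the inverse can be written explicitly: for $c\in[1,pq-1]_p$, set $c_0=\lfloor c/q\rfloor$ and $c_1=\iota^{c_0}(c-c_0q)$. There is no genuine obstacle here; the only point needing care is that the remainder $c-c_0q$ is never $0$, which holds because $p\nmid c$ while $p\mid q$.
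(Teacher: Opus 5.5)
Your proof is correct and follows essentially the same route as the paper: you check well-definedness, show both sets have cardinality $(p-1)q$, and get injectivity from uniqueness of division with remainder by $q$. The paper phrases that last step as $|\iota^{a_0}(a_1)-\iota^{b_0}(b_1)|<q$ forcing $a_0=b_0$, which is the same argument.
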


\begin{proof}
    Since $0\leq c_0q\leq(p-1)q$ and $\iota^{c_0}(c_1)\in[1,q-1]_p$, it follows that $\gamma_q$ is well-defined. Moreover, the source and target of $\gamma_q$ have the same cardinality, namely $(p-1)q$, so it suffices to verify that $\gamma_q$ is injective. If we assume that $\gamma_q(a_0,a_1)=\gamma_q(b_0,b_1)$, then we have
    \[ (b_0-a_0)q = \iota^{a_0}(a_1) - \iota^{b_0}(b_1).\]
    Since $|\iota^{a_0}(a_1) - \iota^{b_0}(b_1)|<q$, this forces $b_0=a_0$ and $\iota^{a_0}(a_1) = \iota^{b_0}(b_1)$, which in turn implies $a_1=b_1$. It follows that $\gamma_q$ is injective, which concludes our proof.
\end{proof}

The next result follows from \cite{CEO}. We state the version needed here and include a proof for completeness.

\begin{theorem}[{\cite{CEO}*{Corollary~5.3}}]\label{thm:GHM-as-tensor-Verlinde}
    For $q=p^s$, $s\geq 1$, consider the subring $\ol{\Delta}^u_q$ of $\ol{\Delta}^u$ with $\bb{Z}$-basis $\delta_i$, $i\in[1,q-1]_p$. There exists an isomorphism of algebras
    \[ \gamma:\left(\mc{V}'_{p-1}\right)^{\oo (s-1)} \oo \mc{V}_{p-2} \lra \ol{\Delta}^u_q,\]
    which can be constructed inductively from the following:
    \begin{enumerate}
        \item If $s=1$ then $\gamma:\mc{V}_{p-2}\lra\ol{\Delta}^u_p$ is defined by $\gamma(\phi_i)=\delta_{i+1}$ for $i=0,\cdots,p-2$.
        \item For $q=p^s$, the bijection $\gamma_q$ from Lemma~\ref{lem:bij-pq-iota} induces a ring isomorphism, denoted by the same symbol,
        \[\gamma_q : \mc{V}'_{p-1} \oo \ol{\Delta}^u_q \lra \ol{\Delta}^u_{pq},\quad\text{defined by }\quad \gamma_q(\phi'_{c_0} \oo \delta_{c_1}) = \delta_{\gamma_q(c_0,c_1)}\quad\text{for $c_0\in[0,p-1]$ and $c_1\in[1,q-1]_p$}.\]
    \end{enumerate}
\end{theorem}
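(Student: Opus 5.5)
The plan is to prove the statement by induction on $s$. The case $s=1$ is a direct comparison of multiplication tables. The inductive step reduces to checking two families of products in $\ol{\Delta}^u_{pq}$ modulo the ideal $I^u_p$. Throughout, $\ol{\Delta}^u_q$ is a subring of $\ol{\Delta}^u$: the $\delta_i$ with $i\leq q$ span the subring $\Delta^u_q$, and $I^u_p$ is an ideal.

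\emph{Base case $s=1$.} Here $\Delta^u_p$ is the representation ring of $\bb{Z}/p$ \cite{green}*{Section~2.2}. I would use the classical decomposition of $\delta_a\delta_b$ for $1\leq a\leq b\leq p$, which can also be read off from \eqref{eq:rel-graded-ungraded} combined with \cite{GPX}*{Theorem~5}:
\begin{itemize}
\item if $a+b\leq p$, then $\delta_a\delta_b=\delta_{b-a+1}+\delta_{b-a+3}+\cdots+\delta_{a+b-1}$;
\item if $a+b>p$, then $\delta_a\delta_b=(a+b-p)\delta_p+\delta_{b-a+1}+\cdots+\delta_{2p-a-b-1}$.
\end{itemize}
Modulo $\delta_p$, and after the shift $\phi_i\mapsto\delta_{i+1}$, both cases give exactly the truncated Clebsch--Gordan rule \eqref{eq:def-phia-phib} with $k=p-2$. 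Hence $\gamma$ is a ring isomorphism $\mc{V}_{p-2}\to\ol{\Delta}^u_p$.

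\emph{Inductive step.} By Lemma~\ref{lem:bij-pq-iota}, the map $\gamma_q$ is a bijection on bases, so it is an additive isomorphism. It remains to check multiplicativity. The algebra $\mc{V}'_{p-1}\oo\ol{\Delta}^u_q$ is generated by $1\oo\ol{\Delta}^u_q$ together with $\phi'_1\oo\delta_1$. Indeed, $\mc{V}'_{p-1}$ is generated by $\phi'_1$ through the three-term recursion $(\phi'_1-1)\phi'_{c-1}=\phi'_c+\phi'_{c-2}$ used in the proof of Proposition~\ref{prop:trig-even-Verlinde-polys}. Since $\gamma_q(\phi'_1\oo\delta_1)=\delta_{q+\iota(1)}=\delta_{2q-1}$, it suffices to verify two identities in $\ol{\Delta}^u_{pq}$, for all $c_0\in[0,p-1]$ and $c,c_1\in[1,q-1]_p$:
\begin{itemize}
\item[(A)] $\delta_c\cdot\delta_{c_0q+\iota^{c_0}(c_1)}=\sum_{d}V(c,c_1;d)\,\delta_{c_0q+\iota^{c_0}(d)}$, where $\delta_c\delta_{c_1}=\sum_d V(c,c_1;d)\delta_d$ in $\ol{\Delta}^u_q$;
\item[(B)] $\delta_{2q-1}\cdot\delta_{c_0q+\iota^{c_0}(c_1)}=\delta_{(c_0-1)q+\iota^{c_0-1}(\iota(c_1))}+\delta_{c_0q+\iota^{c_0}(c_1)}+\delta_{(c_0+1)q+\iota^{c_0+1}(\iota(c_1))}$, with the out-of-range terms interpreted by the truncation in \eqref{eq:def-phi'a-phi'b}.
\end{itemize}
In (B) the first and last terms should come from $\phi'_1\cdot\phi'_{c_0}$ in $\mc{V}'_{p-1}$, while the $\iota$-twist on $c_1$ should come from $\delta_{q-1}\delta_{c_1}\equiv\delta_{q-c_1}$ modulo $I_p^u$. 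Once (A) and (B) hold, multiplicativity on generators together with associativity (both rings are commutative and associative) gives that $\gamma_q$ is a ring map, hence an isomorphism. Composing the $\gamma_q$'s with the base case then yields $\gamma$.

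\emph{Main obstacle.} The crux is proving (A) and (B), which are the periodicity/reflection formulas for tensor products of Jordan blocks. My first attempt would be to extract them from the recursive product formulas of \cite{renaud}*{Theorem~2} and \cite{GPX}*{Proposition~6}. These express $\delta_{c_0q+d}\cdot\delta_c$ for $c<q$ in terms of $\delta_d\delta_c$ and $\delta_{q-d}\delta_c$, shifted by multiples of $q$. To make the bookkeeping cleaner I would work in $\Delta^u$ and use the ring map $F^q:\delta_a\mapsto$ ``$\delta_{aq}/q$'', coming from $T^q=T_1^q+T_2^q$, to handle the $\delta_{2q-1}$ factor: writing $\delta_{2q-1}=\delta_{2q}-(\text{correction})$, the product $\delta_{2q}\delta_{c_0q+d}$ is computed by splitting $\kk[T]/(T^{c_0q+d})$ as a $\kk[T^q]$-module, as in the proof of Theorem~\ref{thm:Pa-recursion}. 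Finally, all summands whose length is divisible by $p$ are discarded using \eqref{eq:del-prod-div-by-p}.
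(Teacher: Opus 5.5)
Your base case and the additive half of the inductive step are fine and agree with the paper, which cites \cite{renaud}*{Theorem~1} for the truncated Clebsch--Gordan rule in $\ol{\Delta}^u_p$ and uses Lemma~\ref{lem:bij-pq-iota} for bijectivity. Reducing multiplicativity to the action of the generators $1\oo\delta_c$ and $\phi'_1\oo\delta_1$ on basis vectors is also logically sound. The first genuine problem is identity (B): it is not what multiplicativity requires, and it is false. The second factor of $\phi'_1\oo\delta_1$ is the unit, so what you need is $\gamma_q(\phi'_1\phi'_{c_0}\oo\delta_{c_1})$, i.e.
\[
\delta_{2q-1}\cdot\delta_{c_0q+\iota^{c_0}(c_1)}=\delta_{(c_0-1)q+\iota^{c_0-1}(c_1)}+\delta_{c_0q+\iota^{c_0}(c_1)}+\delta_{(c_0+1)q+\iota^{c_0+1}(c_1)},
\]
truncated at $c_0=0$ and $c_0=p-1$. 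With $r=\iota^{c_0}(c_1)$, the outer terms are $\delta_{c_0q-r}$ and $\delta_{(c_0+2)q-r}$, so the remainder is reflected. The twist $\iota^{c_0\pm1}$ is already supplied by $\gamma_q$. The extra $\iota$ you attribute to $\delta_{q-1}\delta_{c_1}\equiv\delta_{q-c_1}$ counts it twice, and your right-hand side is the image of $\phi'_{c_0-1}\oo\delta_{q-c_1}+\phi'_{c_0}\oo\delta_{c_1}+\phi'_{c_0+1}\oo\delta_{q-c_1}$.

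Concretely, take $p=q=3$ and $c_0=c_1=1$. Your (B) predicts $\delta_5^2\equiv\delta_2+\delta_5+\delta_8$. Lemma~\ref{lem:dela-delb-ineq} rules this out: summands of length prime to $3$ satisfy $c+2j=9$, so $c$ must be odd. In fact $\delta_5^2=\delta_9+\delta_7+\delta_5+\delta_3+\delta_1\equiv\delta_1+\delta_5+\delta_7$, which is what the corrected formula predicts. Likewise, for $p=2$, $q=4$, $c_0=c_1=1$ your formula gives $\delta_7^2\equiv\delta_3$, whereas Corollary~\ref{cor:prod-dela-odd-Nim} gives $\delta_1$.

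The second problem is that the real content of the inductive step, proving (A) and (B), is only sketched. The Frobenius route does not work as described. The map $\delta_a\mapsto\delta_{aq}/q$ takes values in the span of the $\delta_{mq}$, which lies in $I^u_p$ and so is invisible in $\ol{\Delta}^u$. Writing $\delta_{2q-1}$ as $\delta_{2q}$ minus a correction has no content in $\Delta^u$, where the $\delta_i$ are linearly independent, unless you can multiply the correction, which is the original problem.

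The route that works is the first one you name, and it is exactly the paper's. Combine \cite{renaud}*{G-2}, namely $\delta_{q-1}\delta_c\equiv\delta_{q-c}$ modulo $I^u_p$, with \cite{renaud}*{Theorem~2} (or \cite{GPX}*{Proposition~6}). For arbitrary $a=\gamma_q(a_0,a_1)$, $b=\gamma_q(b_0,b_1)$ with $a_0\leq b_0$, that result writes $\delta_a\delta_b$ modulo $I^u_p$ as a sum over $k=0,\dots,\min(2a_0,2p-2-2b_0)$ of terms $\delta_{(b_0-a_0+k)q+\iota^k(\nu)}$, where $\delta_\nu$ runs over the summands of $\delta_{\iota^{a_0}(a_1)}\delta_{\iota^{b_0}(b_1)}$. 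That range of $k$ is precisely the truncation in \eqref{eq:def-phi'a-phi'b}, and G-2 turns these $\nu$ into $\iota^{a_0+b_0}$ of the summands of $\delta_{a_1}\delta_{b_1}$. So multiplicativity follows for all pairs of basis elements at once, and (A) and the corrected (B) are just special cases. Your reduction to generators would only pay off with an independent, more elementary proof of those two special product formulas, which the proposal does not provide.
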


\begin{proof} In case (1), $\gamma(\phi_i)=\delta_{i+1}$ defines an isomorphism by \cite{renaud}*{Theorem~1}. In case (2), it follows from Lemma~\ref{lem:bij-pq-iota} that $\gamma_q$ is a $\bb{Z}$-module isomorphism, so we only have to verify that $\gamma_q$ respects multiplication.

Consider $a,b\in[1,pq-1]_p$, write $a=\gamma_q(a_0,a_1)$ and $b=\gamma_q(b_0,b_1)$, with $a_0,b_0\in[0,p-1]$, $a_1,b_1\in[1,q-1]_p$, and assume that $a_0\leq b_0$.
Working in the reduced ring, we write
\[ \delta_{a_1}\delta_{b_1} = \sum_{j=1}^\ell m_j\delta_{\mu_j},\]
where each $\mu_j$ is coprime to $p$. By \cite{renaud}*{G-2}, we have $\delta_{q-1}\delta_c = \delta_{\iota(c)}$ for $c\in[1,q-1]_p$. Multiplying the identity above by $\delta_{q-1}^{a_0+b_0}$ yields
\[ \delta_{\iota^{a_0}(a_1)}\delta_{\iota^{b_0}(b_1)} = \sum_{j=1}^{\ell} m_j\delta_{\iota^{a_0+b_0}(\mu_j)}.\]
It follows from \cite{renaud}*{Theorem~2} (or \cite{GPX}*{Proposition~6}) that
\[\delta_a\delta_b = \sum_{j=1}^{\ell}m_j\cdot\left(\sum_{k=0}^{\min(2a_0,2p-2-2b_0)} \delta_{(b_0-a_0+k)q+\iota^k(\mu_j)} \right).\]
Since $\iota$ is an involution, we have that $\iota^{b_0-a_0+k} \circ \iota^k = \iota^{a_0+b_0}$, hence we can apply $\gamma_q^{-1}$ to get
\[
\begin{aligned}
\gamma_q^{-1}(\delta_a\delta_b) &= \sum_{j=1}^{\ell}m_j\cdot\left(\sum_{k=0}^{\min(2a_0,2p-2-2b_0)} \phi'_{b_0-a_0+k} \oo \delta_{\iota^{a_0+b_0}(\mu_j)} \right) \\
&=\left(\sum_{k=0}^{\min(2a_0,2p-2-2b_0)} \phi'_{b_0-a_0+k}\right) \oo \left(\sum_{j=1}^{\ell}m_j\delta_{\iota^{a_0+b_0}(\mu_j)} \right) = \phi'_{a_0}\phi'_{b_0} \oo \delta_{\iota^{a_0}(a_1)}\delta_{\iota^{b_0}(b_1)} = \gamma_q^{-1}(\delta_a)\gamma_q^{-1}(\delta_b).
\end{aligned}
\]
This implies that $\gamma_q^{-1}$ is a ring isomorphism, hence so is $\gamma_q$.
\end{proof}

\subsection{Higher structure constants in the reduced GHM ring}
\label{subsec:high-strconst-GHM}

We consider the higher structure constants $m_c(\ul{a})=m_c(a_1,\cdots,a_n)$ for the ring $\ol{\Delta}^u$, defined by $m_c(\ul{a})=0$ if $p|(1+a_1)\cdots(1+a_n)\cdot c$, and by the equality
\[\delta_{1+a_1}\cdots\delta_{1+a_n} = \sum_{c\geq 1} m_c(\ul{a})\cdot\delta_c \quad\text{in }\ol{\Delta}^u.\]
Using Proposition~\ref{prop:delcj-in-product} and the notation \eqref{eq:fcat-small-c}, we have
\[ m_c(\ul{a})=m_{c,j}(\ul{a})\quad\text{and}\quad f^{\ul{a}}_c(t) = m_c(\ul{a})\cdot t^{j}\quad\text{ for }j=\frac{a_1+\cdots+a_n+1-c}{2}.\]
For $p\nmid c$, we define the power series $\bN_c(t)\in\Lambda[[t]]$ via
\begin{equation}\label{eq:def-Nct}
    \bN_c(t) = \sum_{a_1,\cdots,a_n\geq 0} m_c(\ul{a})\cdot (tz_1)^{a_1}\cdots(tz_n)^{a_n},
\end{equation}
and proceed to show that for $c=1,\cdots,p-1$, it can be computed by \eqref{eq:Nct=AM10product}. To that end we set for $q=p^s$
\[    \bN_c^q(t) = \sum_{a_1,\cdots,a_n=0}^{q-2} m_c(\ul{a})\cdot (tz_1)^{a_1}\cdots(tz_n)^{a_n},
\text{
and note that }\bN_c(t) = \lim_{q\to\infty}\bN_c^q(t).\]

\begin{proposition}\label{prop:recursive-Ncqt}
    Suppose that $1\leq c\leq p-1$, write $q=p^s$, and use notation \eqref{eq:def-Mz-Mt}, \eqref{eq:def-Acz}. We have $\bN_c^p(t)=A_c(t\ul{z})$, and:
    \begin{itemize}
        \item If $p=2$ then $\bN_c^{2q}(t)=\bN_c^{q}(t)\cdot F^q(\Theta(t\ul{z}))$.
        \item If $p>2$ then
    \begin{equation}\label{eq:Npqc-Npq-c=product}
    \begin{bmatrix}
    \bN_c^{pq}(t) & \bN_{pq-c}^{pq}(t)
\end{bmatrix} = \begin{bmatrix}
    \bN_c^{q}(t) & \bN_{q-c}^{q}(t)
\end{bmatrix}\cdot
F^{q}(B(t\ul{z})).
\end{equation}
    \end{itemize}
In particular, the equality \eqref{eq:Nct=AM10product} holds.
\end{proposition}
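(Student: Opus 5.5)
The plan is to compute the generating functions $\bN^q_c(t)$ directly in the ring $\ol{\Delta}^u$ by means of the isomorphism of Theorem~\ref{thm:GHM-as-tensor-Verlinde}. I would use the formal product
\[\prod_{i=1}^n\Big(\sum_{a=0}^{q-2}\delta_{1+a}(tz_i)^a\Big)=\sum_{c}\bN^q_c(t)\,\delta_c \quad\text{in } \ol{\Delta}^u_q\oo\Lambda[t],\]
where $c$ runs over $[1,q-1]_p$. The summands $\delta_{1+a}$ with $p\mid 1+a$ vanish in $\ol{\Delta}^u$, which matches the convention $m_c(\ul a)=0$.

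\emph{Base case $q=p$.} Under $\gamma(\phi_i)=\delta_{i+1}$ we have $m_c(\ul a)=V_{p-2}(a_1,\dots,a_n;c-1)$. Hence $\bN^p_c(t)=\vartheta^{p-2,c-1}(t\ul z)=A_c(t\ul z)$ by \eqref{eq:Ac=vthetap-2c-1}.

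\emph{Inductive step from $q$ to $pq$.} Write $1+a=\gamma_q(a_0,b)$ with $a_0\in[0,p-1]$ and $b\in[1,q-1]_p$, so that $a=a_0q+\iota^{a_0}(b)-1$. The generating element for one variable is then
\[\sum_{a_0=0}^{p-1}\phi'_{a_0}(tz)^{a_0q}\oo\sum_{b}\delta_{\iota^{a_0}(b)}(tz)^{\iota^{a_0}(b)-1}.\]
The second factor depends only on the parity of $a_0$: for $a_0$ even it is $X(z)=\sum_b\delta_b(tz)^{b-1}$, and for $a_0$ odd it is $X'(z)=\sum_b\delta_{q-b}(tz)^{q-b-1}$, which is the same sum reindexed, multiplied by $\delta_{q-1}$. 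So I will use $\delta_{q-1}\delta_c=\delta_{\iota(c)}$ and write $X'(z)=\delta_{q-1}X(z)$.

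Consequently the one-variable element equals
\[\Big(\sum_{a_0\ \mathrm{even}}\phi'_{a_0}(tz)^{a_0q}+\delta_{q-1}\sum_{a_0\ \mathrm{odd}}\phi'_{a_0}(tz)^{a_0q}\Big)\cdot X(z).\]
Take the product over $i$. The factors $X(z_i)$ give $\sum_c\bN^q_c\,\delta_c$. The first factors expand via $w^{p-1,\cdot}$, sorted by the parity of the number of odd $a_0$'s, i.e. by the parity of $\sum a_0$, since $\delta_{q-1}^2=\delta_1$. In degree $q$-scaled variables this gives $\sum_l\big(w^{p-1,l}_{even}+\delta_{q-1}w^{p-1,l}_{odd}\big)(t^q\ul z^q)\,\phi'_l$.

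To extract the coefficient of $\delta_c$ with $c\in[1,p-1]$, note that $\gamma_q^{-1}(c)=(0,c)$ and $\gamma_q^{-1}(pq-c)=(p-1,\iota^{p-1}(q-c))$. Since $p-1$ is even (for $p>2$), the latter equals $(p-1,q-c)$. So I only need the coefficients $l=0$ and $l=p-1$, i.e. $\Theta$ and $\Theta'$ by \eqref{eq:Thetaz=wp-1}, with the $\delta_{q-1}$ factor swapping $\delta_c$ and $\delta_{q-c}$.

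Matching coefficients should then yield \eqref{eq:Npqc-Npq-c=product}. For instance,
\[\bN^{pq}_c=\bN^q_c\,F^q(\Theta_{even})+\bN^q_{q-c}\,F^q(\Theta_{odd}),\]
and analogously for $\bN^{pq}_{pq-c}$ with $\Theta'$, which reproduces the columns of $B$. The main obstacle is this bookkeeping. First, I must check that the degree shifts coincide: $a_0q+\iota^{a_0}(b)-1$ has to split as exactly $a_0q$ plus $(b-1)$ or $(q-b-1)$. Second, I must track how the parity of the number of odd $a_0$'s interacts with $\Theta_{even}$ versus $\Theta_{odd}$, and with $F^q$ of the degree.

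For $p=2$ the same computation applies. Here $\Theta'$ corresponds to $l=1$ and $pq-c=2q-1$, and the terms coming from $\Theta_{odd}$ vanish in the relevant slot, as in \eqref{eq:Mz-charp=2}. This leaves $\bN^{2q}_c=\bN^q_cF^q(\Theta)$.

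Finally, iterating and letting $q\to\infty$ gives \eqref{eq:Nct=AM10product}, since the truncations converge $t$-adically.
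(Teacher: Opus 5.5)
Your argument is the paper's proof. The paper also transports $N^{pq}(\ul{z})$ through $\gamma_q^{-1}$ and absorbs the $\iota^{m_i}$-twist using $\delta_{\iota(c)}=\delta_{q-1}\delta_c$, obtaining
$\gamma_q^{-1}(N^{pq}(\ul{z}))=F^q(Q_{even}(\ul{z}))\otimes N^q(\ul{z})+F^q(Q_{odd}(\ul{z}))\otimes\delta_{q-1}N^q(\ul{z})$.
It then reads off the coefficients of $\phi'_0\otimes\delta_c$ and $\phi'_{p-1}\otimes\delta_{q-c}$ via $\Theta=w^{p-1,0}$ and $\Theta'=w^{p-1,p-1}$, exactly as you do. Its parametrization $a_i=qm_i+b_i$, instead of your $1+a=\gamma_q(a_0,b)$, is only cosmetic.

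One indexing slip needs fixing. Since $\gamma_q^{-1}(\delta_{1+a})=\phi'_{a_0}\otimes\delta_b$, the inner factor in your one-variable display should be $\sum_b\delta_b\,(tz)^{\iota^{a_0}(b)-1}$, not $\sum_b\delta_{\iota^{a_0}(b)}(tz)^{\iota^{a_0}(b)-1}$. As written, that sum and your $X'(z)=\sum_b\delta_{q-b}(tz)^{q-b-1}$ both reindex to $X(z)$ itself, so the claimed identity $X'=\delta_{q-1}X$ would be false. The correct expression is $X'(z)=\sum_b\delta_b(tz)^{q-b-1}=\sum_b\delta_{q-b}(tz)^{b-1}$. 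This does equal $\delta_{q-1}X(z)$, consistent with your own later remark that the exponent splits as $a_0q+(b-1)$ or $a_0q+(q-b-1)$. With this fix, the parity sorting, the coefficient extraction, the $p=2$ case (where $\Theta_{odd}=0$), and the $t$-adic passage to the limit all go through as you describe.
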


\begin{proof}
 The identification $\bN_c^p(t)=A_c(t\ul{z})$ follows from \eqref{eq:Ac=vthetap-2c-1}, and the isomorphism $\mc{V}_{p-2}\simeq\ol{\Delta}^u_p$ in Theorem~\ref{thm:GHM-as-tensor-Verlinde}(1). Noting that the variable $t$ in $\bN_c^q(t)$ only keeps track of the degree with respect to $z_1,\cdots,z_n$, we set $t=1$ and write $\bN^q_c(1) = N^q_c(\ul{z})\in\Lambda$, so that $\bN^q_c(t)=N^q_c(t\ul{z})$. We then have to verify that
 \begin{equation}\label{eq:Npqcz-first}
 N^{pq}_c(\ul{z}) = F^q(\Theta_{even}(\ul{z}))\cdot N^q_c(\ul{z}) + F^q(\Theta_{odd}(\ul{z}))\cdot N^q_{q-c}(\ul{z})\quad\text{ for all }p, 
\end{equation}
\begin{equation}\label{eq:Npqcz-second}
N^{pq}_{pq-c}(\ul{z}) = F^q(\Theta'_{odd}(\ul{z}))\cdot N^q_c(\ul{z}) + F^q(\Theta'_{even}(\ul{z}))\cdot N^q_{q-c}(\ul{z})\quad\text{ for }p>2,
\end{equation}
which we do by employing Theorem~\ref{thm:GHM-as-tensor-Verlinde}.
 
 We define for each $q=p^s$ a polynomial $N^q(\ul{z})$ with coefficients in $\ol{\Delta}^u_q$, given by
 \[ N^q(\ul{z}) = \sum_{a_1,\cdots,a_n=0}^{q-2}\delta_{1+a_1}\cdots\delta_{1+a_n}\cdot z_1^{a_1}\cdots z_n^{a_n} = \sum_{c=1}^{q-1}N^q_c(\ul{z})\cdot\delta_c.\]
 We also define polynomials $Q(\ul{z})$, $Q_{even}(\ul{z})$, $Q_{odd}(\ul{z})$, with coefficients in $\mc{V}'_{p-1}$, by
 \[Q(\ul{z}) = \sum_{m_1,\cdots,m_n=0}^{p-1} \phi'_{m_1}\cdots\phi'_{m_n}\cdot z_1^{m_1}\cdots z_n^{m_n} = Q_{even}(\ul{z}) + Q_{odd}(\ul{z}),\]
 where $Q_{even}(\ul{z})$ collects the even-degree terms of $Q(\ul{z})$, and $Q_{odd}(\ul{z})$ collects the odd-degree terms. The isomorphism $\gamma_q$ in Theorem~\ref{thm:GHM-as-tensor-Verlinde} extends to the respective polynomial rings by sending $\gamma_q(z_i)=z_i$, and we get
 \[\gamma_q^{-1}(N^{pq}(\ul{z})) = \sum_{a_1,\cdots,a_n=0}^{pq-2} \gamma_q^{-1}(\delta_{1+a_1}\cdots\delta_{1+a_n})\cdot z_1^{a_1}\cdots z_n^{a_n} = \sum_{c=1}^{pq-1}N_c^{pq}(\ul{z})\cdot\gamma_q^{-1}(\delta_c).\]
 For each $a_i$ with $\delta_{1+a_i}\neq 0$, we can write $a_i=qm_i+b_i$ with $0\leq m_i\leq p-1$, $0\leq b_i\leq q-2$. It follows that $1+a_i = \gamma_q(m_i,\iota^{m_i}(1+b_i))$, where $\iota=\iota_q$ is as in \eqref{eq:def-iota}. Using the identity $\delta_{\iota^{m_i}(1+b_i)} = \delta_{q-1}^{m_i} \cdot \delta_{1+b_i}$, we get
 \begin{equation}\label{eq:gammaqinv-Nqz}
 \begin{aligned}
 \gamma_q^{-1}(N^{pq}(\ul{z})) &= \sum_{m_1,\cdots,m_n=0}^{p-1}\phi'_{m_1}\cdots\phi'_{m_n} z_1^{qm_1}\cdots z_n^{qm_n} \oo \left(\delta_{q-1}^{m_1+\cdots+m_n}\sum_{b_1,\cdots,b_n=0}^{q-2} \delta_{1+b_1}\cdots\delta_{1+b_n} z_1^{b_1}\cdots z_n^{b_n}\right) \\
 &= F^q(Q_{even}(\ul{z})) \oo N^q(\ul{z}) + F^q(Q_{odd}(\ul{z})) \oo \delta_{q-1}N^q(\ul{z})
 \end{aligned}
 \end{equation}
 
 We can now verify \eqref{eq:Npqcz-first}. For $c=1,\cdots,p-1$ we have $\gamma_q^{-1}(\delta_c) = \phi'_0 \oo \delta_c$, hence $N^{pq}_c(\ul{z})$ is the coefficient of $\phi'_0\oo \delta_c$ in $\gamma_q^{-1}(N^{pq}(\ul{z}))$. It follows from \eqref{eq:Thetaz=wp-1} that $\Theta_{even}(\ul{z})$ (respectively $\Theta_{odd}(\ul{z})$) is the coefficient of $\phi'_0$ in $Q_{even}(\ul{z})$ (respectively in $Q_{odd}(\ul{z})$). Moreover, $N_c^q(\ul{z})$ (respectively $N_{q-c}^q(\ul{z})$) is the coefficient of $\delta_c$ in $N^q(\ul{z})$ (respectively $\delta_{q-1}N^q(\ul{z})$), hence \eqref{eq:Npqcz-first} follows from \eqref{eq:gammaqinv-Nqz}.

 To prove \eqref{eq:Npqcz-second}, we write $\gamma_q^{-1}(\delta_{pq-c}) = \phi'_{p-1}\oo \iota^{p-1}(\delta_{q-c}) = \phi'_{p-1}\oo \delta_{q-c}$, using the fact that $p-1$ is even. It follows that $N^{pq}_{pq-c}(\ul{z})$ is the coefficient of $\phi'_{p-1}\oo \delta_{q-c}$ in $\gamma_q^{-1}(N^{pq}(\ul{z}))$. The second equality in \eqref{eq:Thetaz=wp-1} implies that $\Theta'_{even}(\ul{z})$ (respectively $\Theta'_{odd}(\ul{z})$) is the coefficient of $\phi'_{p-1}$ in $Q_{even}(\ul{z})$ (respectively in $Q_{odd}(\ul{z})$), hence \eqref{eq:Npqcz-second} follows as before from \eqref{eq:gammaqinv-Nqz}.

 Using \eqref{eq:Thetaz-p=2}, the identity $\bN_c^{2q}(t)=\bN_c^{q}(t)\cdot F^q(\Theta(t\ul{z}))$ for $p=2$ becomes a special case of \eqref{eq:Npqcz-first}, while \eqref{eq:Npqc-Npq-c=product} for $p>2$ follows by combining \eqref{eq:Npqcz-first} and \eqref{eq:Npqcz-second}. 
 
 The equality \eqref{eq:Nct=AM10product} for $p>2$ follows by iterating \eqref{eq:Npqc-Npq-c=product}. 
 When $p=2$ we only have $c=1$ in \eqref{eq:Nct=AM10product}, and moreover $A_1(t\ul{z})=1$. We recall from \eqref{eq:Mz-charp=2} that $B(\ul{z})$ is an upper triangular matrix, hence so is $\bB(t)$, and \eqref{eq:Nct=AM10product} can be rewritten as
 \[\begin{bmatrix}
    A_c(t\ul{z}) & A_{p-c}(t\ul{z})
\end{bmatrix} \cdot \bB(t)\cdot\begin{bmatrix} 1 \\ 0 \end{bmatrix} = \begin{bmatrix}
    1 & 1
\end{bmatrix} \cdot \begin{bmatrix} \prod_{s\geq 1}F^{2^s}(\Theta(t\ul{z})) \\ 0 \end{bmatrix} = \prod_{s\geq 1}F^{2^s}(\Theta(t\ul{z})) = \bN_1(t),\]
which follows by iterating the identity $\bN_1^{2q}(t)=\bN_1^{q}(t)\cdot F^q(\Theta(t\ul{z}))$.
\end{proof}

\section{Cohomology characters}
\label{sec:coh-characters}

We can now put together all the results developed so far to prove the main theorems on cohomology characters discussed in the Introduction.

\begin{proof}[Proof of Theorem~\ref{thm:main-nonrec-coh}]
Recalling \eqref{eq:def-Guv} we can write
\[\begin{aligned}
\bG(u,v) &= \sum_{d\geq 0,\ e\geq -1} h^{n-1}(\mc{O}_X(e+1,-d+1-n))\cdot z_1^{-1}\cdots z_n^{-1} u^d v^{d+e} \\
&=\sum_{a_1,\cdots,a_n} u\cdot P_{\ul{a}}(u^{-1}) \cdot (uvz_1)^{a_1}\cdots(uvz_n)^{a_n}.
\end{aligned}\]
where the second equality follows from \eqref{eq:Pat} and \eqref{eq:H01=Hn-2n-1}. If we set $t=u^{-1}$ and apply Theorem~\ref{thm:Pa-recursion}, it follows that we can write
\begin{equation}\label{eq:Guv-times-powsu}
\bG(u,v)(1+u+\cdots+u^{p-1}) = u^{p}(\Sigma_1 + \Sigma_2),
\end{equation}
where
\[\Sigma_1 = \sum_{a_1,\cdots,a_n}\sum_{0\leq i_1,\cdots,i_n\leq p-1} P_{\ul{a}'}(t^p)\cdot t^{i_1+\cdots+i_n}\cdot (uvz_1)^{a_1}\cdots(uvz_n)^{a_n}\]
where $\ul{a}'$ is defined by \eqref{eq:def-aprs}, and
\[ \Sigma_2 = \sum_{a_1,\cdots,a_n}\sum_{c=1}^{p-1}f^{\ul{a}}_c(t) \cdot (t^c+t^{c+1}+\cdots+t^{p-1})\cdot (uvz_1)^{a_1}\cdots(uvz_n)^{a_n}.\]

To compute $u^p\cdot\Sigma_1$, we note that conditions \eqref{eq:def-aprs} can be rewritten as
\[ a_s = i_s + pa'_s + j_s,\quad\text{for some }0\leq j_s\leq p-1,\]
and that with this notation we have
\[ t^{i_s}\cdot(uvz_s)^{a_s} = (vz_s)^{i_s}\cdot(uvz_s)^{j_s}\cdot F^p\left((uvz_s)^{a'_s}\right).\]
We can then reindex the summation in $\Sigma_1$ using the parameters $i_s,j_s,a'_s$ to get
\[
\begin{aligned}
    u^p\cdot\Sigma_1 &= \sum_{\substack{a'_1,\cdots,a'_n \\ 0\leq i_1,\cdots,i_n\leq p-1 \\ 0\leq j_1,\cdots,j_n\leq p-1}} F^p\left(u\cdot P_{\ul{a}'}(t)\cdot(uvz_1)^{a'_1}\cdots(uvz_n)^{a'_n} \right) \cdot \prod_{s=1}^n \left((vz_s)^{i_s}\cdot(uvz_s)^{j_s}\right) \\
    &= F^p\left(\bG(u,v)\right)\cdot\bh^{(p)}(v)\cdot \bh^{(p)}(uv).
\end{aligned}
\]

Recalling \eqref{eq:fcat-small-c} and using the identity
\[u^p\cdot t^{\frac{a_1+\cdots+a_n+1-c}{2}} \cdot (t^c+t^{c+1}+\cdots+t^{p-1}) =  u^{1/2}\cdot\frac{u^{c/2}-u^{p-c/2}}{1-u}\cdot u^{\frac{-1}{2}(a_1+\cdots+a_n)} \]
we obtain
\[ 
\begin{aligned}
u^p\cdot\Sigma_2 &= \sum_{c=1}^{p-1}u^{1/2}\cdot\frac{u^{c/2}-u^{p-c/2}}{1-u}\cdot\left(\sum_{a_1,\cdots,a_n}m_c(\ul{a})\cdot (u^{1/2}vz_1)^{a_1}\cdots(u^{1/2}vz_n)^{a_n} \right) \\
&=\sum_{c=1}^{p-1}u^{1/2}\cdot\frac{u^{c/2}-u^{p-c/2}}{1-u}\cdot\bN_c(u^{1/2}v).
\end{aligned}
\]
The first part of Theorem~\ref{thm:main-nonrec-coh} now follows from \eqref{eq:Guv-times-powsu} after dividing by $1+u+\cdots+u^{p-1}$.

To prove the second part of the theorem, we first note that if $q=pq'$ then
    \[ \bh^{(q)} = \bh^{(p)}\cdot F^p(\bh^{(p)})\cdots F^{q'}(\bh^{(p)}).\]
    Similarly, we have
    \[ 1 + u + \cdots + u^{q-1} = (1+u+\cdots+u^{p-1})\cdot F^p(1+u+\cdots+u^{p-1})\cdots F^{q'}(1+u+\cdots+u^{p-1}).\]
    It follows that if we define $\bg$ via
    \[ \bg(u,v) = \frac{\bh^{(p)}(uv)\cdot \bh^{(p)}(v)}{1+u+\cdots+u^{p-1}}\]
    then
    \begin{equation}\label{eq:prod-Fqguv}
    \bg(u,v)\cdot F^p(\bg(u,v))\cdots F^{q'}(\bg(u,v)) = \frac{\bh^{(q)}(uv)\cdot \bh^{(q)}(v)}{1 + u + \cdots + u^{q-1}}. 
    \end{equation}
    We can now rewrite the functional equation we obtained in the first part of the theorem to get
    \[ 
    \begin{aligned}
    \bG(u,v) &= \bN(u,v) + \bg(u,v)\cdot F^p(\bG(u,v)) \\
      &= \bN(u,v) + \bg(u,v)\cdot F^p(\bN(u,v)) + \bg\cdot F^p(\bg(u,v))\cdot F^{p^2}(\bN(u,v)) +\cdots \\
      &= \sum_{q\geq 1} \frac{\bh^{(q)}(uv)\cdot \bh^{(q)}(v)}{1+u+\cdots+u^{q-1}}\cdot F^q\left(\bN(u,v)\right),
    \end{aligned}
    \]
    where the last equality follows from \eqref{eq:prod-Fqguv} and concludes our proof.
\end{proof}

We now specialize further the results of Theorem~\ref{thm:main-nonrec-coh} to the case of characteristic $p=2$.

\begin{proof}[Proof of Theorem~\ref{thm:h1coh-char2-nonrecursive}]
    It follows from Corollary~\ref{cor:prod-dela-odd-Nim} and equation \eqref{eq:def-Nct} that
    \[\bN_1(t) = \sum_{a_1\oplus\cdots\oplus a_n=0} (tz_1)^{2a_1}\cdots (tz_n)^{2a_n} \overset{\eqref{eq:def-Nim-pol}}{=}\sum_{m\geq 0} F^2(\mc{N}_m(\ul{z})) \cdot t^{4m}.\]
    If we write $\mc{N}_m=\mc{N}_m(\ul{z})$, we get from the above equality combined with \eqref{eq:N1t-char2} that
    \[\bN(u,v) = \frac{u}{1+u}\cdot\sum_{m\geq 0} F^2(\mc{N}_m(\ul{z})) \cdot u^{2m}\cdot v^{4m}.\]
    We can then write for $q=2^s$
    \[
    \begin{aligned}
        &\frac{\bh^{(q)}(uv)\cdot \bh^{(q)}(v)}{1+u+\cdots+u^{q-1}}\cdot F^q\left(\bN(u,v)\right) = \frac{\bh^{(q)}(uv)\cdot \bh^{(q)}(v)\cdot u^q}{1 + u + \cdots + u^{2q-1}} \cdot \sum_{m\geq 0} F^{2q}(\mc{N}_m)\cdot u^{2qm}\cdot v^{4qm} \\
        &= \bh^{(q)}(uv)\cdot \bh^{(q)}(v)\cdot(1-u)\cdot\left(\sum_{j\geq 0} u^{(2j+1)q}\right)\cdot \left(\sum_{m\geq 0} F^{2q}(\mc{N}_m)\cdot u^{2qm}\cdot v^{4qm}\right)
    \end{aligned}
    \]
    If we write $c_{a,b}$ for the coefficient of $u^av^b$ in $\bh^{(q)}(uv)\cdot \bh^{(q)}(v)\cdot(1-u)$ then we have for $b\geq 2a-1$ that
    \[ c_{a,b} = h^{(q)}_{b-a}\cdot h^{(q)}_a-h^{(q)}_{b-a+1}\cdot h^{(q)}_{a-1} = s^{(q)}_{(b-a,a)}.\]
    It follows from Theorem~\ref{thm:main-nonrec-coh} that we can rewrite
    \[ \bG(u,v) = \frac{u}{1+u}\cdot \bN_1(u^{1/2}v) + \sum_{\substack{q=2^s\geq 2 \\ a,b,m,j\geq 0}} F^{2q}(\mc{N}_m)\cdot c_{a,b} \cdot u^{a+(2m+2j+1)q}\cdot v^{4mq+b}.\]
    To compute the coefficient of $u^d v^{d+e}$ for $e\geq d-1$ we set $a=d-(2m+2j+1)q$, $b=d+e-4mq$ and note that the inequality $e\geq d-1$ forces $b\geq 2a-1$. Moreover, for $e\geq d-1$, the coefficient of $u^dv^{d+e}$ in $\frac{u}{1+u}\cdot \bN_1(u^{1/2}v)$ is $0$. Using the fact that $b-a=e-(2m-2j-1)q$ it follows that the coefficient of $u^d v^{d+e}$ in $\bG(u,v)$ is
    \[h^1(D^d\mc{R}(e)) = \sum_{\substack{q=2^s\geq 2 \\ m,j\geq 0}} F^{2q}(\mc{N}_m)\cdot s^{(q)}_{(e-(2m-2j-1)q,d-(2m+2j+1)q)}. \]
    Restricting further to parameters $m,j,q$ for which $d\geq (2m+2j+1)q$ (since otherwise the truncated Schur polynomial factor vanishes), we obtain the desired conclusion.
\end{proof}

\begin{proof}[Proof of Theorem~\ref{thm:coh-recursion}] 
We assume that $e\geq d-1$, and verify first the formula for $h^1(D^d\mc{R}(e))$ by equating the coefficients of $u^d v^{d+e}$ on the two sides of the equality \eqref{eq:func-eqn-Guv}. To that end, we note that \eqref{eq:def-Nuv} implies that if $u^d v^{d+e}$ appears as a term in $\bN(u,v)$, then $d\geq 1+\frac{d+e}{2}$, or equivalently $e\leq d-2$, which is contrary to our assumption. It then follows from \eqref{eq:func-eqn-Guv} that
\[h^1(D^d\mc{R}(e)) = \sum_{a,b} \Phi_{d-ap,e-bp}\cdot F^p\left(h^1(D^a\mc{R}(b))\right)\]
where $\Phi_{d,e}$ is the coefficient of $u^d v^{d+e}$ in 
\[\bg(u,v)=\frac{\bh^{(p)}(uv)\cdot \bh^{(p)}(v)}{1+u+\cdots+u^{p-1}}.\]
To conclude, we need to verify that $\Phi_{d,e}$ can be computed by \eqref{eq:def-Phi-de}. We have that $\Phi_{d,e}-\Phi_{d-p,e+p}$ is the coefficient of $u^d v^{d+e}$ in $\bg(u,v)\cdot(1-u^p) = \bh^{(p)}(uv)\cdot \bh^{(p)}(v)\cdot(1-u)$. As in the proof of Theorem~\ref{thm:h1coh-char2-nonrecursive}, this in turn equals $c_{d,d+e}=s^{(p)}_{(e,d)}$. Since $\Phi_{d-jp,e+jp}=0=s^{(p)}_{(e+jp,d-jp)}$ for $j\gg 0$, we obtain that
\[ \Phi_{d,e} = \sum_{j\geq 0} \left(\Phi_{d-jp,e+jp}-\Phi_{d-(j+1)p,e+(j+1)p}\right) = \sum_{j\geq 0}s^{(p)}_{(e+jp,d-jp)},\]
which establishes \eqref{eq:def-Phi-de} and proves the desired formula for $h^1(D^d\mc{R}(e))$.

To prove the corresponding result for $h^0(D^d\mc{R}(e))$ we use \eqref{eq:def-hd-sab}, \eqref{eq:chi-DdRe=sed}, and reduce to proving the identity
\[s_{(e,d)} = \sum_{a,b}\Phi_{d-ap,e-bp}\cdot F^p(s_{(b,a)}).\]
Multiplying by $u^d v^{d+e}$ and summing over all $d,e$, this is equivalent to
\[ \sum_{d,e} s_{(e,d)}u^d v^{d+e} = \bg(u,v)\cdot F^p\left( \sum_{a,b}  s_{(b,a)}u^a v^{a+b}\right).\]
Multiplying both sides by $1/(1-u)=1+u+u^2+\cdots$ and using the identity $\sum_{j\geq 0}s_{(e+j,d-j)}=h_e\cdot h_d$, it is then enough to verify that
\[\sum_{d,e} h_e\cdot h_d \cdot u^d v^{d+e} = h^{(p)}(uv)\cdot h^{(p)}(v)\cdot F^p\left(\sum_{a,b} h_b\cdot h_a\cdot u^a v^{a+b} \right),\]
which in turn reduces to proving that
\[\sum_{d}h_d\cdot t^d = h^{(p)}(t) \cdot F^p\left(\sum_{d}h_d\cdot t^d\right).\]
Since $\sum_d h_d\cdot t^d = \prod_{i=1}^n\frac{1}{1-tz_i}$, the above identity is a reformulation of \eqref{eq:def-bhqt} with $q=p$.
\end{proof}

We conclude this section with a proof of \cite{GRV}*{Conjecture~5.1}, which we restate as follows.

\begin{theorem}\label{thm:small-weights}
 Suppose that $tp\leq d<(t+1)p$, where $1\leq t<p$. We have for all $e\geq d-1$
 \[ h^1(D^d\mc{R}(e)) = \sum_{\substack{1\leq B\leq A\leq t \\ 0\leq J\leq A-B}} \left(h^{(p)}_{e+(B-J)p}\cdot h^{(p)}_{d-Ap} - h^{(p)}_{e+(B-J)p+1}\cdot h^{(p)}_{d-Ap-1}\right) \cdot F^p(s_{(A-B,J)}).\]
\end{theorem}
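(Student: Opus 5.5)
We apply Theorem~\ref{thm:coh-recursion} with $i=1$: since $e\geq d-1$,
\[ h^1(D^d\mc{R}(e)) = \sum_{a=0}^{t}\sum_{b\geq -1}\Phi_{d-ap,e-bp}\cdot F^p\left(h^1(D^a\mc{R}(b))\right),\]
because $\lfloor d/p\rfloor=t$. The inner characters have $a\leq t<p$, so \eqref{eq:h01-for-small-d} applies to them. It gives $h^1(D^a\mc{R}(b))=s_{(a-1,b+1)}$ when $-1\leq b\leq a-2$, and $0$ otherwise. In particular $a\geq 1$. We reparametrize by $A=a$ and $B=a-1-b$. Then $b\geq -1$ and $b\leq a-2$ become $1\leq B\leq A$, and the factor is $F^p(s_{(A-1,A-B)})$. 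The sum is now
\[\sum_{1\leq B\leq A\leq t}\Phi_{d-Ap,\,e-(A-1-B)p}\cdot F^p(s_{(A-1,A-B)}),\qquad \Phi_{d-Ap,e'}=\sum_{j\geq 0}s^{(p)}_{(e'+jp,\,d-Ap-jp)}.\]

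The target formula has a different shape. It uses $F^p(s_{(A-B,J)})$ with $0\leq J\leq A-B$, and each term carries only the single truncated Schur piece $h^{(p)}_{e+(B-J)p}h^{(p)}_{d-Ap}-h^{(p)}_{e+(B-J)p+1}h^{(p)}_{d-Ap-1}$, with $d-Ap$ fixed. So there must be a reindexing. In the $j$-sum of $\Phi$, the second index $d-Ap-jp$ equals $d-A'p$ with $A'=A+j$, and $A'\leq t$ because $d-A'p\geq 0$ is needed for a nonzero term. The first index is $e-(A-1-B)p+jp$.

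I would collect terms by the new outer index $A'$. The leftover $\GL$-characters $F^p(s_{(A-1,A-B)})$, grouped with a fixed first index, should then reassemble into $F^p(s_{(A'-B',J)})$. Explicitly, I would set $A'=A+j$ and match $B'-J=j+B+1-A$, so that $e+(B'-J)p$ equals the first index. We need some split of this quantity into $(B',J)$ with $0\leq J\leq A'-B'$.

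The two sides index different sets of Schur functions, so a direct bijection is unlikely. The main obstacle is to find the Schur function identity that makes the regrouping work. I would look for it in the form
\[\sum_{j}F^p(s_{(A-1,A-B)}) \;\text{over fixed }(A+j,\;j+B+1-A)\;=\;\sum_{J}F^p(s_{(A'-B',J)}),\]
which, after applying $F^p$, is an identity among two-row Schur polynomials in ordinary variables.

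If the term-by-term route is messy, I would work with generating functions instead. Both sides are coefficients of $u^dv^{d+e}$ in products of $\bg(u,v)$ with a polynomial in $F^p(u),F^p(v)$. It then suffices to show that the two polynomials have equal coefficients of $U^AV^{A+b}$, with $U=u^p$ and $V=v^p$, in the range $A\leq t$, $e\geq d-1$. Here I would use the Jacobi--Trudi form $s_{(x,y)}=h_xh_y-h_{x+1}h_{y-1}$ and telescope in $J$. The sum $\sum_{J=0}^{A-B}s_{(A-B,J)}$ should collapse via the identity $\sum_{j\geq 0}s_{(e+j,d-j)}=h_eh_d$ used in the proof of Theorem~\ref{thm:coh-recursion}.

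Before fixing the indices I would check the regrouping on small cases, namely $t=1$ and $t=2$, since sign and shift conventions are easy to get wrong. The case $t=1$ is a useful test: there $A=B=1$ and $J=0$, and both sides give the single term $s^{(p)}_{(e+p,d-p)}$.
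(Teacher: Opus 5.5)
Your setup matches the paper's. You apply Theorem~\ref{thm:coh-recursion}, use \eqref{eq:h01-for-small-d} to restrict to $a\geq 1$, $-1\leq b\leq a-2$, expand $\Phi$, and keep only $a+j\leq t$. \textbf{The gap is at the step you call the main obstacle: the proposal stops there and does not prove the statement.} Its premise is also wrong. On both sides the characters that occur are exactly the $s_{(x,y)}$ with $0\leq y\leq x\leq t-1$: they are $s_{(a-1,b+1)}$ on your side and $s_{(A-B,J)}$ in the target. No Schur function identity is needed, only a reindexing. Consider the map
\[(a,b,j)\longmapsto(A,B,J)=(a+j,\ j+1,\ b+1),\qquad\text{with inverse } a=A-B+1,\ b=J-1,\ j=B-1.\]
It is a bijection from $\{a\geq 1,\ -1\leq b\leq a-2,\ j\geq 0,\ a+j\leq t\}$ onto $\{1\leq B\leq A\leq t,\ 0\leq J\leq A-B\}$. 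Under it, $s_{(a-1,b+1)}=s_{(A-B,J)}$, $e-bp+jp=e+(B-J)p$ and $d-ap-jp=d-Ap$. So the expanded recursion and the target formula agree term by term.

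In your intermediate notation this is simply $B'=j+1$, $J=A-B$. The split of $B'-J=j+B+1-A$ you were looking for is therefore forced, and both sides of your displayed ``identity'' are the same finite sum, matched term by term.

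Your generating-function fallback would not work as described. The index $J$ enters the truncated Schur factor through $e+(B-J)p$, so $\sum_J s_{(A-B,J)}$ cannot be split off. That sum also has varying total degree, so it is not of the form $\sum_{j\geq 0}s_{(e+j,d-j)}=h_eh_d$.

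The $t=2$ check you proposed, with four terms on each side, would have exposed the bijection immediately. Once it is inserted, your argument is exactly the paper's proof.
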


\begin{proof} It follows from Theorem~\ref{thm:coh-recursion} that
\[ h^1(D^d\mc{R}(e)) = \sum_{a=0}^t\sum_{b\geq -1}\sum_{j\geq 0}\left(h^{(p)}_{e-bp+jp}\cdot h^{(p)}_{d-ap-jp} - h^{(p)}_{e-bp+1+jp}\cdot h^{(p)}_{d-ap-1-jp}\right)\cdot F^p\left(h^1(D^a\mc{R}(b)\right).\]
Since $a<p$, we have as in characteristic zero
\[ h^1(D^a\mc{R}(b)) = \begin{cases}
    s_{(a-1,b+1)} & \text{if }-1\leq b\leq a-2, \\
    0 & \text{if }b\geq a-1.
\end{cases}\]
We can therefore restrict the above sum to $a\geq 1$, $-1\leq b\leq a-2$, and also restrict to $a+j\leq t$ (since otherwise $h^{(p)}_{d-ap-jp}=h^{(p)}_{d-ap-1-jp}=0$). If we set $A=a+j$, $B=j+1$, $J=b+1$, then $A-B = a-1 \geq b+1 = J \geq 0$ and $t\geq A\geq B\geq 1$. Moreover, we have
\[e+(B-J)p = e - bp + jp,\quad\text{and}\quad d-Ap = d-ap-jp.\]
It follows that the conclusion of Theorem~\ref{thm:coh-recursion} agrees in this case with our desired description of $h^1(D^d\mc{R}(e))$.
\end{proof}

\section*{Acknowledgements}
Experiments with Macaulay2 \cite{GS} and discussions with ChatGPT \cite{ChatGPT} have provided many valuable insights. The authors would like to thank Holger Brenner, Zhao Gao, Evan O'Dorney, Jenny Kenkel, Feliks R\k{a}czka, Alessio Sammartano, Anurag Singh, Kevin Tucker, and Keller VandeBogert for helpful discussions regarding various aspects of this project. Marangone gratefully acknowledges that this research was supported in part by the Pacific Institute for the Mathematical Sciences. Raicu and Reed acknowledge the support of the National Science Foundation Grant DMS-2302341.  Reed also acknowledges support by by the National Natural Science Foundation of China (No. 12288201). Part of the material in this paper is based upon work supported by the National Science Foundation under Grant No. DMS-1928930 and by the Alfred P. Sloan Foundation under grant G-2021-16778, while Raicu and Reed were in residence at the Simons Laufer Mathematical Sciences Institute (formerly MSRI) in Berkeley, California, during the Spring 2024 semester. Preliminary work on this project was done during the \emph{Pragmatic Research school in Algebraic Geometry and Commutative Algebra} at the University of Catania in June 2023, when all the authors were in residence -- we thank Francesco Russo for his hospitality, and for providing us with the opportunity to participate. Kyomuhangi acknowledges support of EMS committee for developing countries (EMS-CDC) towards travel to University of Catania from where this project emanated.

\end{document}